\documentclass[11pt]{amsart}

%% Language and font encodings
\usepackage[english]{babel}
\usepackage[T1]{fontenc}
\usepackage{soul}

\usepackage{amssymb}
\usepackage{amsmath}
\usepackage{amsthm}

\newtheorem{theorem}{Theorem}[section]
\newtheorem*{theoremF*}{Theorem F}
\newtheorem*{theoremL*}{Theorem L}
\newtheorem{lemma}[theorem]{Lemma}
\newtheorem{proposition}[theorem]{Proposition}
\newtheorem{corollary}[theorem]{Corollary}

\newtheorem{fact}[theorem]{Fact}

\theoremstyle{definition}
\theoremstyle{remark} 

\newtheorem{definition}[theorem]{Definition}
\newtheorem{remark}[theorem]{Remark}

\newtheorem{notation}[theorem]{Notation}

\newcommand{\ep}{\varepsilon}
\newcommand{\vf}{\varphi}

\newcommand{\conv}{\operatorname{conv}}

\newcommand{\dist}{\mathrm{dist}}

\newcommand{\Tan}{\operatorname{Tan}}
\newcommand{\Nor}{\operatorname{Nor}}

\newcommand{\wtilde}{\widetilde}

\newcommand{\R}{\mathbb{R}}

\newcommand{\N}{\mathbb{N}}

\newcommand{\M}{\mathcal{M}}

\newcommand{\cL}{\mathcal{L}}

\newcommand{\cD}{\mathcal{D}}

\newcommand{\INt}{{\rm int}\,}
\newcommand{\interior}{{\rm int}\,}

\newcommand{\diam}{{\rm diam}\,}
\newcommand{\graph}{\operatorname{graph}}
\newcommand{\hyp}{\operatorname{hyp}}
\newcommand{\epi}{\operatorname{epi}}

\newcommand{\reach}{{\rm reach}\,}

\newcommand{\id}{{\rm id}}

\newcommand{\spa}{\operatorname{span}}
\newcommand{\Unp}{\operatorname{Unp}}

\newcommand{\B}{\operatorname{B}}

%% Title, authors and addresses

%% use the tnoteref command within \title for footnotes;
%% use the tnotetext command for theassociated footnote;
%% use the fnref command within \author or \affiliation for footnotes;
%% use the fntext command for theassociated footnote;
%% use the corref command within \author for corresponding author footnotes;
%% use the cortext command for theassociated footnote;
%% use the ead command for the email address,
%% and the form \ead[url] for the home page:
%% \title{Title\tnoteref{label1}}
%% \tnotetext[label1]{}
%% \author{Name\corref{cor1}\fnref{label2}}
%% \ead{email address}
%% \ead[url]{home page}
%% \fntext[label2]{}
%% \cortext[cor1]{}
%% \affiliation{organization={},
%%             addressline={},
%%             city={},
%%             postcode={},
%%             state={},
%%             country={}}
%% \fntext[label3]{}

\title[Two-dimensional sets with positive reach]
%[On two-dimensional sets with positive reach]
{A characterization of the local structure  of two-dimensional sets with positive reach}

%% use optional labels to link authors explicitly to addresses:
%% \author[label1,label2]{}
%% \affiliation[label1]{organization={},
%%             addressline={},
%%             city={},
%%             postcode={},
%%             state={},
%%             country={}}
%%
%% \affiliation[label2]{organization={},
%%             addressline={},
%%             city={},
%%             postcode={},
%%             state={},
%%             country={}}

\author{Jan Rataj\and Lud\v ek Zaj\'\i\v cek} %% Author name
%\emailAdd{rataj@karlin.mff.cuni.cz}
%\emailAdd{rataj@karlin.mff.cuni.cz}
%% Author affiliation
\address{Charles University, Faculty of Mathematics and Physics, Sokolovsk\'a 83, 186 75 Praha 8, Czech Republic}
\email{rataj@karlin.mff.cuni.cz}
\email{zajicek@karlin.mff.cuni.cz}

\begin{document}           

%% Abstract
\begin{abstract}
The main result of the article is a complete characterization of the local structure  of two-dimensional sets with positive reach in $\R^d$. 
We also present a more elementary proof of a recent result of A. Lytchak which describes for $k\leq d$ the local  structure  of $k$-dimensional sets with positive reach $A$ in $\R^d$ at points where the tangent cone of $A$ is $k$-dimensional. 
As an easy corollary of our and Lytchak's results we obtain a characterization of compact two-dimensional sets with positive reach in $\R^d$.
Our method also shows that, for any set $A\subset\R^d$ with positive reach, the set of points at which the tangent cone of $A$ is $k$-dimensional is locally contained in a $k$-dimensional $C^{1,1}$ surface. As a consequence we obtain that if $1\leq k<d$, and $A$ is $k$-dimensional, it can be covered by countably many $k$-dimensional $C^{1,1}$ surfaces.
\end{abstract}

%%Research highlights
%\begin{highlights}
%\item Research highlight 1
%\item Research highlight 2
%\end{highlights}

%% Keywords
%\begin{keyword}
%set with positive reach \sep two-dimensional set \sep local structure \sep tangent cone \sep $C^{1,1}$ diffeomorphism
%% keywords here, in the form: keyword \sep keyword
%% PACS codes here, in the form: \PACS code \sep code
%% MSC codes here, in the form: \MSC code \sep code
%% or \MSC[2008] code \sep code (2000 is the default)
%\end{keyword}

\maketitle

\section{Introduction}

A set $A\subset\R^d$ is said to have \emph{positive reach} if for some $\ep>0$, any point $x\in\R^d$ with $\dist(x,A)<\ep$ have its unique nearest point $a\in A$ (point with $\dist(x,A)=|x-a|$). Sets with positive reach, which form a common extension of closed convex sets and compact $C^2$-domains with boundary,  were introduced  and applied by Federer \cite{Fed59}. Later their natural generalization have been investigated also in Riemannian manifolds (see \cite{Ban82}) and in general Hilbert spaces (see e.g. \cite{CT10,ThII}), under the name \emph{prox-regular sets}. Currently, sets with positive reach are extensively used in data analysis for manifold reconstruction (see e.g.\ \cite{BCM22}).

It appears that sets with positive reach may have much  more complicated geometric or topological  structure than closed convex sets or compact $C^2$-domains. Even the problem of complete
 characterization of the local structure of sets with positive reach in $\R^d$ seems to be too difficult. 
However, satisfactory answers are known in some special cases.

The following result   is a consequence of statements claimed in \cite[Remark 4.20]{Fed59}
 and easily follows from \cite[Proposition 7.4]{RZ17}.

\begin{theoremF*} [Federer]
Let  $A \subset \R^d$ be a $k$-dimensional set ($1\leq k<d$) with positive reach, $a \in A$,
and let the tangent cone of $A$ at $a$ be a $k$-dimensional space. Then $A$ agrees on a neighbourhood of $a$ with  a $k$-dimensional $C^{1,1}$ surface.
\end{theoremF*}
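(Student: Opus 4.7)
The plan is to translate so that $a=0$, set $V:=\Tan(A,0)$, and exploit the defining ball condition of positive reach. Since $\reach(A)\geq r$ for some $r>0$ and the normal cone is polar to the tangent cone, the hypothesis that $V$ is a $k$-dimensional linear subspace yields $\Nor(A,0)=V^\perp$. For every unit $\nu\in V^\perp$ the open ball $B(r\nu,r)$ is disjoint from $A$, and rearranging $|y-r\nu|\geq r$ for arbitrary $y\in A$ in a small neighborhood $U$ of $0$ yields the quadratic ``cushion'' bound
\[
|p_{V^\perp}(y)|\leq \tfrac{1}{r}|p_V(y)|^2,
\]
where $p_V$, $p_{V^\perp}$ denote the orthogonal projections onto $V$ and $V^\perp$.

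Next I would show that $\pi:=p_V|_{A\cap U}$ is a homeomorphism onto an open neighborhood of $0$ in $V$. For injectivity: two distinct points $y_1,y_2\in A\cap U$ with $p_V(y_1)=p_V(y_2)$ would satisfy $\nu:=(y_2-y_1)/|y_2-y_1|\in V^\perp$. After shrinking $U$, one has $V^\perp\subset \Nor(A,y_1)$ --- because $\Tan(A,y_1)$ is confined to the tangent of the cushion at $y_1$, which is close to $V$ --- and then $y_2\in B(y_1+r\nu,r)$ contradicts positive reach at $y_1$. For openness: the hypothesis that $A$ is $k$-dimensional combined with $\Tan(A,0)=V$ being a full $k$-plane forces $\pi(A\cap U)$ to fill a neighborhood of $0$ in $V$; this is the content of \cite[Proposition 7.4]{RZ17}.

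Hence $A\cap U$ is the graph of a function $f:\Omega\to V^\perp$ on an open $\Omega\subset V$ containing $0$, with $f(0)=0$ and $|f(v)|\leq |v|^2/r$, which already yields $Df(0)=0$. Rerunning the cushion argument at each $x\in A\cap U$ --- where $\Tan(A,x)$ is again a $k$-dimensional linear subspace close to $V$ --- produces a uniform quadratic bound on the distance from $A$ to its affine tangent plane at $\pi(x)$. This uniform bound is the standard characterization of $f\in C^{1,1}(\Omega)$.

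The main obstacle is the transfer of the pointwise hypothesis at $a$ into uniform control on a neighborhood, i.e., proving that $\Tan(A,x)$ remains a full $k$-dimensional linear subspace for every $x\in A$ near $a$. This requires the interplay of (lower) semi-continuity of the tangent cone for positive-reach sets with the $k$-dimensionality of $A$, which is precisely the input supplied by \cite[Proposition 7.4]{RZ17}; once this uniform regularity of tangent cones is in hand, the cushion estimates become uniform and the $C^{1,1}$ conclusion follows by routine calculation.
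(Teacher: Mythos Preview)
The paper does not give a standalone proof of Theorem~F; it merely cites \cite[Remark~4.20]{Fed59} and \cite[Proposition~7.4]{RZ17}, and later recovers Theorem~F as the special case $\Phi^{-1}(a)\in\interior K$ of Theorem~L. The paper's proof of Theorem~L (Section~6) proceeds through different machinery: first the Lipschitzness of $\psi_k^A$ on $T_k(A)$ (Section~3, via fullness of simplices and tangential regularity), then Whitney's $C^{1,1}$ extension theorem to build a local diffeomorphism flattening $T_k(A)$ onto $\R^k$ (Theorem~\ref{tdmnapl2}), and finally Proposition~\ref{PD}. Your route is the direct geometric one, closer in spirit to Federer's original and to what underlies \cite[Proposition~7.4]{RZ17}; it bypasses the $\psi_k$-machinery entirely. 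The payoff of your approach is self-containedness for this one statement; the payoff of the paper's approach is that the same Lipschitz/Whitney machinery also yields Theorem~L and Theorem~\ref{tdmnapl2}.

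One genuine slip in your injectivity step: the claim ``$V^\perp\subset\Nor(A,y_1)$'' is false in general. Once you know (via lower semicontinuity and $\dim A=k$) that $\Tan(A,y_1)$ is a $k$-dimensional subspace $W$ close to $V$, you only get $\Nor(A,y_1)=W^\perp$, a $(d-k)$-subspace \emph{close to} $V^\perp$ but not containing it. The fix is easy: either pick $\nu'\in W^\perp\cap S^{d-1}$ with $|\nu-\nu'|$ small and use the exclusion ball $B(y_1+r\nu',r)$, or simply invoke Federer's tangential inequality (Proposition~\ref{fedtan}), $\dist(y_2-y_1,\Tan(A,y_1))\leq|y_2-y_1|^2/(2r)$, which immediately rules out $y_2-y_1\in V^\perp\setminus\{0\}$ when $W$ is close to $V$ and $|y_2-y_1|$ is small. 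With this correction your sketch is sound, and you have correctly identified that the propagation of the $k$-plane hypothesis to nearby points (and the openness of $\pi(A\cap U)$ in $V$) is the one non-routine ingredient, supplied by \cite[Proposition~7.4]{RZ17}.
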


By well-known results, if $P$ is a $k$-dimensional $C^{1,1}$ surface (cf.\ Definition~\ref{plochy}), $a\in P$ and $r>0$ is sufficiently
 small, then $P\cap\overline{B}(a,r)$ has positive reach. It follows that  Theorem~F gives a complete satisfactory
 characterization of the local structure of $k$-dimensional sets $A$ with positive reach at a fixed point $a\in\R^d$ such that the tangent cone of $A$ at $a$ is a subspace of dimension $k$: it is
 the same as the local structure of $k$-dimensional $C^{1,1}$ surfaces containing $a$. (More formally, the two systems
 of the corresponding germs coincide.)

Recently, A. Lytchak proved an interesting result (\cite[Theorem 1.2]{Ly24}) which is essentially an extension of Theorem~F. Lytchak's result deals with sets in $d$-dimensional Riemannian manifold; its (essentially equivalent) Euclidean version is the following.

\begin{theoremL*} [Lytchak]
Let  $A \subset \R^d$ be a $k$-dimensional set ($1\leq k\leq d$) with positive reach, $a \in A$, and let the tangent cone of $A$ at $a$ have dimension $k$. Then there exist a convex body $K\subset\R^k\subset \R^d$ (of dimension $k$), an open set $U\subset\R^d$ containing $K$ and a $C^{1,1}$-diffeomorphism  $\Phi: U \to \R^d$  such that $\Phi(K)$ is a neighbourhood of $a$ in $A$.
\end{theoremL*}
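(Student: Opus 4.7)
The plan is to reduce the statement to Theorem~F on the ``smooth'' part of $A$, combined with a quantitative study of the orthogonal projection onto the span of the tangent cone. By translation and rotation, assume $a=0$ and that $V=\R^k\subset\R^d$ is the linear span of $T_0A$. Since tangent cones of sets with positive reach are convex (Federer), $C:=T_0A$ is a closed convex cone of full dimension $k$ in $V$. The target is to realize a neighborhood of $0$ in $A$ as the graph, over a convex body $K\subset V$, of a $C^{1,1}$-function with values in $V^\perp$, and then to realize this graph as the image of $K$ under a $C^{1,1}$-diffeomorphism of an open set in $\R^d$.

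I would first fix $r>0$ small enough that $A\cap\overline B(0,r)$ lies in an arbitrarily thin slab around $V$; this is a quantitative consequence of positive reach together with the assumption that $T_0A$ spans $V$. The slab bound forces the orthogonal projection $\pi:\R^d\to V$ to be injective on $A\cap\overline B(0,r)$, so that this local piece of $A$ is the graph of a continuous function $f:K\to V^\perp$ for $K:=\pi(A\cap\overline B(0,r))$. Let $A_\circ\subset A\cap\overline B(0,r)$ denote the (relatively open) set of $x$ at which the tangent cone $T_xA$ is a full $k$-dimensional subspace; Theorem~F yields that $A$ is a $C^{1,1}$-surface near each such $x$, so $f$ is $C^{1,1}$ on the open set $\pi(A_\circ)\subset V$ with derivative bounds controlled uniformly by the reach. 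A Whitney-type extension then provides a $C^{1,1}$-map $\tilde f$ on an open neighborhood $U_0$ of $K$ in $V$.

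The heart of the argument, and the main obstacle, is to prove that $K$ is a convex body in $V$. My approach would be to identify $\partial K$ with the projections of exactly those $x\in A\cap\overline B(0,r)$ whose tangent cone $T_xA$ is a proper subcone of $V$, and to transport the infinitesimal convexity of $C$ at $0$ to global convexity of $K$: concretely, for each unit vector $u\in V\setminus C$, the supporting halfspace for $C$ in direction $u$ should transfer, up to a Lipschitz perturbation controlled by $1/\reach(A)$, into a separating hyperplane for $K$. Positive reach provides the quantitative stability of tangent cones at nearby points that is needed to propagate the convex structure from the origin to the whole local piece; this is where the proof necessarily departs from the soft consequences of Theorem~F and where the genuine content of Lytchak's theorem lives.

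Writing $\R^d=V\oplus V^\perp$, I would define the candidate diffeomorphism by
\[
\Phi(y,z):=(y,\,z+\tilde f(y)), \qquad (y,z)\in U:=U_0\times\bigl(V^\perp\cap B(0,\rho)\bigr),
\]
for $\rho>0$ sufficiently small. The Jacobian of $\Phi$ is upper-triangular with identity on the diagonal, so $\Phi$ is a $C^{1,1}$-diffeomorphism from $U$ onto an open subset of $\R^d$, and $\Phi(K)=\{(y,\tilde f(y)):y\in K\}$ would then be a neighborhood of $a=0$ in $A$, as required. All steps other than the convexity step are essentially standard once the convex parametrizing domain is in hand; the anticipated main difficulty of the proof sketched here is the convexity of $K$.
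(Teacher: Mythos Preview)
Your outline correctly identifies the graph–over–$V$ picture and the diffeomorphism $\Phi(y,z)=(y,z+\tilde f(y))$, but the step you yourself flag as ``the heart of the argument'' is not just hard: as stated, it is false. The set $K:=\pi\bigl(A\cap\overline B(0,r)\bigr)$ is in general \emph{not} convex, and no amount of Lipschitz propagation of the cone $C$ will make it so. Already when $k=d$ (so $\pi=\id$ and $f\equiv 0$) you are asserting that $A\cap\overline B(0,r)$ is convex for small $r$, which fails for the prototypical example $A=\{(x,y)\in\R^2: y\le -x^2\}$: here $\reach A>0$, $T_0A=\{y\le 0\}$ is two-dimensional, but every neighbourhood of $0$ in $A$ is bounded by a piece of the concave parabola $y=-x^2$. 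Your proposed identification of $\partial K$ with projections of points having a proper tangent cone is also incorrect, since part of $\partial K$ always comes from $A\cap\partial\overline B(0,r)$, where the tangent cone may well be the full space.

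The underlying problem is structural: your $\Phi$ is the identity on $V$ and therefore cannot repair any non-convexity that lives inside $V$. The paper proceeds differently. For $1\le k<d$ it first shows (Theorem~\ref{tdmnapl2}, via the Lipschitzness of $\psi_k^A$) that a neighbourhood of $a$ in $A$ lies on a $k$-dimensional $C^{1,1}$ surface $\Gamma$, and then uses the flattening $\eta(x,y)=(x,y-F(x))$ to send that neighbourhood into $\R^k$. This reduces the problem to the case $k=d$. In that case the paper does \emph{not} claim any local convexity of $A$; instead it invokes the semiconcave hypograph description (Proposition~\ref{PD}~(i)): locally $A=\hyp_u f$ with $f$ Lipschitz semiconcave, hence $f=g+c|\cdot|^2$ with $g$ concave, and the non-trivial $C^{1,1}$-diffeomorphism $(w,t)\mapsto (w,t+c|w|^2)$ of $\R^d$ sends the convex body $K=\hyp g\cap\overline B(0,\delta)$ onto a neighbourhood of $a$ in $A$. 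So convexity is produced by an additional $C^{1,1}$ change of coordinates inside $V$, not by the projection; this is exactly the ingredient your sketch is missing.
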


Since any set  $\Phi(K)$ as above and containing $a$ has all properties of the set $A$ from Theorem~L, we see that Theorem~L gives a characterization of the local structure of sets $A$ with positive reach at points $a$ where the dimension of the tangent cone is the same as the dimension of $A$ (cf.\ our remark after Theorem~F).

Note that Theorem~L not only generalizes Theorem~F (which corresponds to the case $\Phi^{-1}(a)  \in \interior K$), but also gives (for $k=1$) a local characterization of one-dimensional sets $A$ with positive reach in $\R^d$ which is equivalent to that which follows from  \cite[Corollary~8.9]{RZ17}: on a neighbourhood of $a\in A$, $A$ is either a singleton, or a simple $C^{1,1}$ arc. 

If $A\subset\R^2$ has positive reach then its local description using semiconcave and semiconvex functions (without any further restrictions) follows easily from \cite[Theorem~6.4]{RZ17}. This result, however, cannot be easily extended to higher dimensions.

The main result of the present article is a complete characterization of the local structure of two-dimensional sets $A$ with positive reach in $\R^d$ for $d\geq 3$. Note that Theorem~L gives such a characterization at points $a\in A$ where $\Tan(A,a)$ has dimension $2$; the only nontrivial remaining case is when $\dim\Tan(A,a)=1$.  This case, however, is much  different from the case $\dim\Tan(A,a)=2$ - in particular, we cannot expect that $A$ will lie on a two-dimensional $C^{1,1}$ surface in some neighbourhood of $a$. This follows already from \cite[Example~7.13]{RZ17}, which we will describe now in a slightly generalized version.

Consider $r>0$ and Lipschitz functions
$\psi \leq 0 \leq \vf$ on $[0,r]$ such that $\vf$ is semiconcave on $(0,r)$, $\psi$
is semiconvex on $(0,r)$, $\vf(0)= \psi(0)=0$, $\vf'_+(0)= \psi'_+(0)=0$, and set
$B:= \{(x,y):\, x\in [0,r],\,  \psi(x) \leq y \leq \vf(x)\}$.
Then $B$ has positive reach (see Lemma~\ref{B_set_PR}).
We further perturbe $B\subset\R^2\subset\R^3$ to a set $M\subset \R^3$ by a ``multirotation'' by the following way.
We set $C:= \{x\in [0,r]: \vf(x)=\psi(x)\}$, denote by $\Lambda$ the set of all components of $(0,r)\setminus C$ and for each $\lambda \in \Lambda$ denote $B_\lambda = \{(x,y) \in B:\ x\in \overline \lambda\}$. Now we rotate each  $B_\lambda$  in $\R^3$ around the $x$-axis by an angle $\theta(\lambda)$ in the positive sense to a set $M_\lambda$ and set  
\begin{equation}  \label{M}
    M:= (B\setminus \bigcup _{\lambda \in \Lambda} B_\lambda)
 \cup   \bigcup _{\lambda \in \Lambda} M_\lambda.
\end{equation} 
Then  $M$ has positive reach as well (it follows from Lemma~\ref{prorot}) and its tangent cone at $0$ is a half-line. It is easy to see that if $C$ is totally disconnected and $0$ is its accumulation point, we can choose the rotations $M_\lambda$ of $B_\lambda$ so that no neighbourhood of the origin in $M$ lies on a two-dimensional $C^{1,1}$ surface.

Our main result shows (rather surprisingly) that the above example is in a sense universal (up to a $C^{1,1}$-diffeomorphism) for the case when the tangent cone of $A$ at $a$ is a half-line. In order to state our main result (Theorem~\ref{T_main} below), we need to include the ``both-sided'' version of the set $B$ above (where the tangent cone is the full line, see case (ii) in Definition~\ref{B-sets}). (We use the notion $\B$-set as abbreviation of ``basic set''.)
We also  define properly  ``multirotations'' in $\R^d$ (Definition~\ref{multrot}).

\begin{definition}[B-sets]  \label{B-sets}
\begin{enumerate}
    \item[(i)] We call $M\subset \R^2$  a \emph{$\B_-$-set} if there exist $r>0$, Lipschitz functions $\psi \leq 0 \leq \vf$ on $[0,r]$ such that $\vf$ is semiconcave on $(0,r)$, $\psi$ is semiconvex on $(0,r)$, $\vf(0)= \psi(0)=0$, $\vf'_+(0)= \psi'_+(0)=0$ and $M= \{(x,y):\ x\in [0,r],  \psi(x) \leq y \leq \vf(x)\}$. 
    \item[(ii)] We call $M\subset \R^2$  a \emph{$\B_+$-set} if there exist $r>0$, Lipschitz functions $\psi \leq 0 \leq \vf$ on $[-r,r]$ such that $\vf$ is semiconcave on $(-r,r)$, $\psi$ is semiconvex on $(-r,r)$, $\vf(0)= \psi(0)=0$, $\vf'(0)= \psi'(0)=0$ and $M= \{(x,y):\ x\in [-r,r],  \psi(x) \leq y \leq \vf(x)\}$.  
    \item[(iii)] We say that $M\subset\R^2$ is a \emph{$\B$-set} if $M$ is a $\B_-$-set or a $\B_+$-set.
\end{enumerate}
\end{definition}

Any $\B$-set has positive reach (see Lemma~\ref{B_set_PR}).

Let $SO(d)$ be the group of rotations in $\R^d$ (i.e., linear mappings preserving the scalar product and orientation). We call an element $R\in SO(d)$ a \emph{2-rotation}
if there exists a two-dimensional subspace $T$ of $\R^d$ orthogonal to $e_1$ and such that $Rv=v$ for any $v\in T^\perp$. (Notice that 2-rotations are determined by rotations in some 2-plane orthogonal to $e_1$.) Note that $R^{-1}$ is a 2-rotation whenever $R$ is.

Let $\pi$ denote the orthogonal projection onto the $x_1$-axis $\spa\{e_1\}$. 

\begin{definition}[multirotations]  \label{multrot}
Let $\emptyset\neq C\subset\spa\{e_1\}\subset\R^d$ ($d\geq 3$) be a compact set and $\Lambda$ the family of all components (open intervals) of $\spa\{e_1\}\setminus C$.
We will call a mapping $\rho:\R^d\to\R^d$ \emph{multirotation} (in $\R^d$) \emph{associated with $C$} if 
$\rho(x)=x$ whenever $\pi(x)\in C$ and for any $\lambda\in\Lambda$ there exists a 2-rotation $R_\lambda$ such that $\rho|_{\pi^{-1}(\lambda)}=R_\lambda|_{\pi^{-1}(\lambda)}$.
We will say that the multirotation $\rho$ is determined by the family of $2$-rotations $(R_\lambda)_{\lambda\in\Lambda}$.
\end{definition}

Given a set $A\subset\R^d$ with positive reach and $k\in\{0,1,\dots,d\}$, we denote by $T_k(A)$ the set of all points of $A$ at which the tangent cone of $A$ has dimension $k$.

Note that if $B$ is a B-set then, since it has positive reach, $T_1(B)$ is a compact subset of $\R=\spa\{e_1\}$ by Lemma~\ref{T_k}~(iii) (cf.\ also Remark~\ref{Rem_listky}~(b)).

\begin{theorem}  \label{T_main}
    Let $A\subset\R^d$ ($d\geq 3$) be a set with positive reach with $\dim A\leq 2$ and let $a\in A$ be such that $\dim(\Tan(A,a))=1$. Then there exist a $\B$-set $B\subset\R^2\subset\R^d$, a multirotation $\rho:\R^d\to\R^d$ associated with $T_1(B)$ and a $C^{1,1}$-diffeomorphism $\Phi:U\subset\R^d\to\R^d$ such that $0\in U$, $a=\Phi(0)$, $\rho(B)\subset U$ and $\Phi(\rho(B))$ is a neighbourhood of $a$ in $A$.
    
    On the other hand, if $\Phi$, $B$ and $\rho$ are as above, then $\Phi(\rho(B))$ agrees on a neighbourhood of $a$ with some set $A\subset\R^d$ with positive reach satisfying $\dim A\leq 2$ and $\dim(\Tan(A,a))=1$. 
\end{theorem}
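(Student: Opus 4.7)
The plan is to establish the two directions separately. The converse (``moreover'') assertion is the more routine half. Given $B$, $\rho$, and $\Phi$ as specified, $B$ has positive reach by Lemma~\ref{B_set_PR}, and the multirotation result (Lemma~\ref{prorot}, the same lemma invoked to justify positive reach of the example set $M$ in the introduction) yields that $\rho(B)$ has positive reach in $\R^d$. A $C^{1,1}$-diffeomorphism preserves positive reach on a small enough neighborhood, so $\Phi(\rho(B))$ has positive reach near $a$. Moreover, $0\in T_1(B)$ is fixed by $\rho$ (which leaves $\spa\{e_1\}$ pointwise fixed and acts only on the fibers $\pi^{-1}(\lambda)$, $\lambda\in\Lambda$), so $\Tan(\rho(B),0)=\Tan(B,0)$; the vanishing conditions of Definition~\ref{B-sets} force $\Tan(B,0)$ to be a half-line (case $\B_-$) or a full line (case $\B_+$), and $d\Phi_0$ maps this onto the $1$-dimensional cone $\Tan(\Phi(\rho(B)),a)$.

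For the main direction, after translation and an orthogonal rotation of $\R^d$ I assume $a=0$ and $\Tan(A,0)\subset\spa\{e_1\}$. The first reduction invokes the structural result announced in the abstract: $T_1(A)$ is locally contained in a $1$-dimensional $C^{1,1}$ curve through $0$. A $C^{1,1}$-diffeomorphism of a neighborhood of $0$ straightens this curve to a segment of $\spa\{e_1\}$. Since $C^{1,1}$-diffeomorphisms preserve positive reach and tangent-cone dimensions locally, I may assume $T_1(A)\cap U\subset\spa\{e_1\}$ on some neighborhood $U$; the set $C:=T_1(A)\cap\spa\{e_1\}$ is then compact (Lemma~\ref{T_k}(iii)) with $0\in C$, and I write $\spa\{e_1\}\setminus C=\bigcup_{\lambda\in\Lambda}\lambda$. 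For each $\lambda\in\Lambda$ near $0$, the piece $A_\lambda:=A\cap\pi^{-1}(\lambda)$ consists entirely of points of $T_2(A)$, so Theorem~L provides a convex-body $C^{1,1}$-model of $A$ at each point of $A_\lambda$. Because every boundary point of $A_\lambda$ lies over an endpoint of $\lambda$ in $C$, where $\Tan(A,\cdot)\subset\spa\{e_1\}$, a limiting argument forces the tangent $2$-plane of $A$ at each interior point of $A_\lambda$ to contain $e_1$. Thus $A_\lambda$ lies on a $2$-dimensional $C^{1,1}$ surface $\Sigma_\lambda$ ruled in the $e_1$-direction; a further $C^{1,1}$-diffeomorphism (absorbable into the final $\Phi$) flattens $\Sigma_\lambda$ into a $2$-plane $V_\lambda$ containing $\spa\{e_1\}$. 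Let $R_\lambda$ be the $2$-rotation carrying the standard $(x_1,x_2)$-plane onto $V_\lambda$, set $B_\lambda:=R_\lambda^{-1}(A_\lambda)\subset\R^2$, and use the standard planar description of $2$-dimensional sets with positive reach (cf.\ the remark preceding Definition~\ref{B-sets}) to write $B_\lambda=\{(x,y):\ x\in\overline{\lambda},\ \psi_\lambda(x)\leq y\leq\vf_\lambda(x)\}$ with $\psi_\lambda\leq 0\leq\vf_\lambda$, semiconvex and semiconcave respectively.

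The final and most delicate step is to glue the pieces $B_\lambda$ into a single $\B$-set $B$. Define $\vf(x):=\vf_\lambda(x)$ for $x\in\overline{\lambda}$ and $\vf(x):=0$ on $C$, and analogously for $\psi$. One must verify that $\vf$ is Lipschitz and semiconcave on the appropriate interval, that $\psi$ is Lipschitz and semiconvex, and that the appropriate derivatives at $0$ vanish. The uniform estimates --- Lipschitz and semiconcavity constants independent of $\lambda$ --- are the main obstacle, and the positive reach of $A$ is what provides them, via uniform bounds on the principal curvatures of the $\Sigma_\lambda$ and on the proximal normals matching across points of $C$. The vanishing derivative conditions at $0$ follow from $\Tan(A,0)\subset\spa\{e_1\}$, and the dichotomy $\B_-$ versus $\B_+$ is dictated by whether $\Tan(A,0)$ is a half-line or a full line. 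Finally, setting $\rho$ to be the multirotation determined by $(R_\lambda)_{\lambda\in\Lambda}$ and taking $\Phi$ to be the composition of the inverses of the preliminary straightenings gives $\Phi(\rho(B))=A$ on a neighborhood of $a$, as required.
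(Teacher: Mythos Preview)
Your treatment of the ``moreover'' part and the first reduction (straightening $T_1(A)$ onto a segment of $\spa\{e_1\}$ via a $C^{1,1}$-diffeomorphism) are essentially correct and match the paper's Proposition~\ref{T1}. The serious problem lies in the middle step, where you flatten each leave $A_\lambda$ to a $2$-plane.

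First, the claim that ``the tangent $2$-plane of $A$ at each interior point of $A_\lambda$ contains $e_1$'' is not justified and is generally false. The limiting argument you invoke only gives this at the endpoints $a_\lambda,b_\lambda\in C$ (where the tangent cone is one-dimensional) and at the points of $A_\lambda$ lying on the $e_1$-axis itself; at points of $A_\lambda$ off the axis there is no reason the tangent plane should contain $e_1$. The paper does not use any such claim---it works instead with the tangent plane at the endpoint only and relies on a uniform Lipschitz bound for $\psi_2^A$ on each leave (Lemma~\ref{lipna2}).

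Second, and more fundamentally, the phrase ``a further $C^{1,1}$-diffeomorphism (absorbable into the final $\Phi$) flattens $\Sigma_\lambda$'' conceals the whole difficulty. There may be infinitely many leaves accumulating at points of $C$, and you need a \emph{single} $C^{1,1}$-diffeomorphism of $\R^d$ that simultaneously flattens every $A_\lambda$ into a plane and is the identity on $C$. Producing a separate diffeomorphism for each $\lambda$ and declaring them ``absorbable'' does nothing toward this: one must control how the leaf-wise maps match up across $C$ with $C^{1,1}$ regularity, and this is precisely the content of Proposition~\ref{T2}. The paper builds the required map by applying Whitney's $C^{1,1}$ extension theorem (Fact~\ref{Wh}) to data $f,\varphi$ assembled using product-integrals of the operators $\pi_{\psi^\lambda(a_\lambda)}-\pi_{\psi^\lambda(b_\lambda)}$; the product-integral machinery is what guarantees the multiplicative compatibility and the estimate $\|\mu(s,t)-I\|\leq M|t-s|$ needed to verify the Whitney conditions across different leaves. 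Your sketch contains no mechanism for this and, as written, does not yield a proof.

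Once a single diffeomorphism makes all leaves planar (in varying $2$-planes through $\spa\{e_1\}$), the paper's approach is also cleaner than your proposed gluing: a multirotation brings every leave into $\R^2$ in one stroke, the image has positive reach by Lemma~\ref{prorot}, and the planar structure theorem of \cite{RZ17} (Lemma~\ref{skew:B}) then produces a $\B$-set directly, avoiding any need to splice together individual $\varphi_\lambda,\psi_\lambda$.
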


\begin{remark}
   It is easy to see that Theorem~\ref{T_main} holds if we replace ``$\dim A\leq 2$'' with ``$\dim A=2$'' at both occurrences. The only difference is in the proof of the second part: While in the first case we set in our proof  $A:=\Phi(\rho(B))$, in the latter case we could take $A:=\Phi(\rho(B))\cup D$, where $\reach D>0$, $\dim D=2$ and $\dist(\Phi(\rho(B)),D)>0$. 
\end{remark}

Note that (ignoring the trivial case when $a$ is an isolated point of $A$), Theorem~\ref{T_main} together with Theorem~L give a complete characterization of the local structure of two-dimensional sets $A\subset\R^d$ with positive reach: For a given point $a\in\R^d$, the set of all germs at $a$ of sets $A$ as above with $a\in A$ is described ``constructively''. Also, both results together imply (using also \eqref{reach_compact} and Lemma~\ref{local}) the following characterization of compact two-dimensional sets with positive reach.

\begin{corollary}
    Let $A\subset\R^d$ ($d\leq 3$) be a two-dimensional compact set. Then $A$ has positive reach if and only if for each non-isolated $a\in A$ there exist a compact set $K\subset\R^d$, an open set $K\subset U\subset\R^d$ and a $C^{1,1}$-diffeomorphism $\Phi:U\to\R^d$ such that $\Phi(K)$ is a neighbourhood of $a$ in $A$ and either 
    \begin{enumerate}
        \item[{\rm (a)}] $K\subset\R^2\subset\R^d$ is convex two-dimensional, or
        \item[{\rm (b)}] $K=\rho(B)$, where $B\subset\R^2\subset\R^d$ is a $\B$-set and $\rho:\R^d\to\R^d$ is a multirotation associated with $T_1(B)$.
    \end{enumerate} 
\end{corollary}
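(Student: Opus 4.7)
The plan is to derive the corollary directly from Theorem~L (used when $\dim\Tan(A,a)=2$) and Theorem~\ref{T_main} (used when $\dim\Tan(A,a)=1$), invoking in the converse direction the ``moreover'' clauses of these two results together with \eqref{reach_compact} and Lemma~\ref{local} to pass from local to global positive reach.

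For the \emph{only if} direction, fix a compact two-dimensional set $A\subset\R^d$ with positive reach and let $a\in A$ be non-isolated. Since $a$ is a limit point of $A$, the tangent cone $\Tan(A,a)$ contains a non-zero vector, and since $\dim A=2$ its dimension is either $1$ or $2$. If $\dim\Tan(A,a)=2$, Theorem~L produces a two-dimensional convex body $K\subset\R^2\subset\R^d$, an open set $U\supset K$, and a $C^{1,1}$-diffeomorphism $\Phi\colon U\to\R^d$ with $\Phi(K)$ a neighbourhood of $a$ in $A$; this gives case~(a). If $\dim\Tan(A,a)=1$, Theorem~\ref{T_main} yields a $\B$-set $B$, a multirotation $\rho$ associated with $T_1(B)$, and a $C^{1,1}$-diffeomorphism $\Phi$ with $\Phi(\rho(B))$ a neighbourhood of $a$ in $A$; setting $K:=\rho(B)$ (compact because $B$ is compact and $\rho$ is continuous) gives case~(b).

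For the \emph{if} direction, assume the stated local description. We show that each point of $A$ has a neighbourhood in $A$ of positive reach. Isolated points are trivial, since singletons have infinite reach. For a non-isolated $a$ of type~(a), $\Phi(K)$ is the $C^{1,1}$-diffeomorphic image of a planar convex body; such images inherit positive reach (the $C^{1,1}$-smoothness of $\Phi$ and $\Phi^{-1}$ yields a uniform lower bound for the distance to the medial axis on a compact neighbourhood), so $a$ has a neighbourhood in $A$ of positive reach. For $a$ of type~(b), the ``moreover'' clause of Theorem~\ref{T_main} asserts precisely that $\Phi(\rho(B))$ coincides near $a$ with a set of positive reach, giving the same conclusion. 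Combining the resulting local positive reach at every point of the compact set $A$ with \eqref{reach_compact} and Lemma~\ref{local} then yields that $A$ itself has positive reach. The only delicate point is the ``$C^{1,1}$-diffeomorphism preserves positive reach locally'' fact used in case~(a); this is standard but must be applied carefully to ensure the reach bound is inherited on a full neighbourhood of $a$ in $A$, after which everything is a direct unpacking of the two main theorems and the local-to-global reduction.
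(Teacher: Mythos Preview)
Your argument is correct and follows essentially the same route the paper indicates: apply Theorem~L for points with two-dimensional tangent cone, Theorem~\ref{T_main} for points with one-dimensional tangent cone, and for the converse combine the local positive reach at each point with Lemma~\ref{local} and \eqref{reach_compact}. The only imprecision is in case~(a) of the \emph{if} direction, where instead of the informal remark about the medial axis you should simply invoke Lemma~\ref{reach_C11}: a convex body has infinite reach, and its image under a $C^{1,1}$-diffeomorphism therefore has positive reach, so $\Phi(K)$ is a set of positive reach agreeing with $A$ on a neighbourhood of $a$, and Lemma~\ref{local} gives $\reach(A,a)>0$.
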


In the situation of Theorem~\ref{T_main} (Theorem~L), we obtain rather easily that the mapping 
$\Psi:=\Phi\circ\rho|_B$ ($\Psi:=\Phi|_K$, respectively) is bi-Lipschitz. Using this, we obtain a positive answer to \cite[Question~1.7]{Ly24}:
 
\begin{corollary}  \label{bilip}
    Let $A\subset\R^d$ ($d\geq 3$) be a two-dimensional set with positive reach and $a\in A$. Then there exists a compact set $0\in B\subset\R^2$ with positive reach and a bi-Lipschitz mapping $\Psi:B\to A^*$, where $A^*$ is a neighbourhood of $a$ in $A$.
\end{corollary}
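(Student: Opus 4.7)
The plan is to split into three cases according to $\dim\Tan(A,a)\in\{0,1,2\}$. The case $\dim\Tan(A,a)=0$ (equivalently, $a$ isolated in $A$) is trivial: take $B:=\{0\}\subset\R^2$ and $\Psi(0):=a$. When $\dim\Tan(A,a)=2$, Theorem~L supplies a convex body $K\subset\R^2\subset\R^d$ (hence of positive reach), an open $U\supset K$, and a $C^{1,1}$-diffeomorphism $\Phi:U\to\R^d$ with $\Phi(K)$ a neighbourhood of $a$ in $A$; I would set $B:=K$ and $\Psi:=\Phi|_K$, which is bi-Lipschitz since $\Phi$ and $\Phi^{-1}$ are $C^{1,1}$ (hence locally Lipschitz) while $K$ and $\Phi(K)$ are compact.

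The essential case is $\dim\Tan(A,a)=1$. I would invoke Theorem~\ref{T_main} to obtain a $\B$-set $B\subset\R^2\subset\R^d$ (of positive reach by Lemma~\ref{B_set_PR}), a multirotation $\rho$ associated with $C:=T_1(B)$, and a $C^{1,1}$-diffeomorphism $\Phi:U\to\R^d$ with $\rho(B)\subset U$ compact and $\Phi(\rho(B))$ a neighbourhood of $a$ in $A$. Setting $\Psi:=\Phi\circ\rho|_B$, bi-Lipschitzness of $\Phi|_{\rho(B)}$ is automatic by the same compactness argument, so the whole question reduces to the bi-Lipschitz property of $\rho|_B$.

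The main obstacle is exactly this bi-Lipschitz estimate for $\rho|_B$: the angles $\theta_\lambda$ of the 2-rotations attached to the components $\lambda$ of $\spa\{e_1\}\setminus C$ need not be coherent at all. Writing $p=xe_1+ye_2$, $p'=x'e_1+y'e_2$ in $B$, I would first observe that on $C$ the $\B$-set inequalities $\psi\le 0\le\vf$ together with $\vf=\psi$ force $\vf=\psi=0$, so any point of $B$ projecting into $C$ lies on the axis $\spa\{e_1\}$, which is fixed by every 2-rotation. This makes $\rho$ an isometry on $B$ whenever $p,p'$ project into $\overline\lambda$ for a single $\lambda$ (or into $C$). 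The only nontrivial case is $\pi(p)\in\lambda\neq\lambda'\ni\pi(p')$, where a short computation reduces the question to comparing $(x-x')^2+y^2+y'^2-2yy'\langle R_\lambda e_2,R_{\lambda'}e_2\rangle$ with $(x-x')^2+(y-y')^2$.

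The saving observation is that between $\lambda$ and $\lambda'$ there must lie a point of $C$, so $\dist(x,C)+\dist(x',C)\le|x-x'|$; combined with $\vf,\psi$ being $L$-Lipschitz and vanishing on $C$, this yields $|y|+|y'|\le L|x-x'|$. Both squared distances then lie in the interval $[(x-x')^2,\,(1+L^2)(x-x')^2]$, hence are comparable with constants depending only on $L$. This gives the uniform bi-Lipschitz bound for $\rho|_B$, completing the proof.
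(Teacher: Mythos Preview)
Your proof is correct and follows the same three-case strategy as the paper: trivial for $a\in T_0(A)$, Theorem~L for $a\in T_2(A)$, and Theorem~\ref{T_main} for $a\in T_1(A)$, with $\Psi:=\Phi\circ\rho|_B$ and the bi-Lipschitzness of $\Phi$ on the compact image coming from local Lipschitzness of $C^{1,1}$ maps. The only difference is that the paper invokes Lemma~\ref{prorot}(i) for the bi-Lipschitz property of $\rho|_B$, whereas you supply that argument directly; your computation (separating $x,x'$ by a point of $C=T_1(B)$ and using $|y|+|y'|\le L|x-x'|$) is essentially the same as the paper's proof of that lemma.
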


The proof of Theorem~\ref{T_main} has two principal ingredients which come from Lytchak's proof of Theorem~L.

First, given a set $A\subset\R^d$ with positive reach and $1\leq k< d$, we consider the mapping $\psi_k=\psi_k^A:x\mapsto\spa\{\Tan(A,x)\}$ from $T_k(A)$ into the Grassmannian $G(d,k)$ and prove its Lipschitzness on some subsets of $T_k(A)$ under additional assumptions on $A$. Second, using the Lipschitzness of $\psi_2$ on a suitable set, we construct a $C^{1,1}$-diffeomorphism applying Whitney's $C^{1,1}$ extension theorem to a suitably chosen mapping $f:D\subset\R^d\to\R^d$. Here our approach differs from that of Lytchak who also uses the Whitney's $C^{1,1}$  extension, but for a mapping from $\R^k$ to $\R^{d-k}$. For our construction of the mapping $f$ above, we need some basic facts from the theory of product-integration from \cite{GJ90}, which is the third principal ingredient of our proof.

When proving the Lipschitzness of $\psi_k$, in contrast to Lytchak who applies the theory of CAT($\kappa$) spaces, 
we use a more elementary method based on ``tangential regularity'' of sets with positive reach and some well-known facts on ``fullness'' (very close to the more frequently used ``thickness'') of simplices taken from Whitney \cite{Whi57}. 
Some additional arguments are needed to prove Lemma~\ref{lipna2} about Lipschitzness of $\psi_2$ for special two-dimensional sets  which is important in the proof of Theorem~\ref{T_main}.

Using our method we also provide an alternative (more elementary) proof of Theorem~L. 

Moreover, our method also yields the local Lipschitzness of $\psi_k$ on $T_k(A)$ for \emph{any} set $A\subset\R^d$ with positive reach, which implies that $T_k(A)$ lies locally on a $k$-dimensional $C^{1,1}$-surface (Theorem~\ref{tdmnapl2}). (Note that for $A$ $k$-dimensional, this follows already from Theorem~L.)
In the case $k=1$ we obtain (using a different method) a stronger version (Theorem~\ref{T1+}) which will be used substantially in the proof of our main result (Theorem~\ref{T_main}).
Theorem~\ref{tdmnapl2} easily implies that for $1\leq k<d$, each $k$-dimensional subset of $\R^d$ with positive reach can be covered by countably many $k$-dimensional $C^{1,1}$ surfaces (see Corollary~\ref{count}), which is a new result up to our knowledge.

The structure of the article is the following.
In Section~2 (Preliminaries), we fix some basic notation and recall some (mostly well-known) facts about $C^{1,1}$ mappings and sets with positive reach.
Section~3 is devoted to the study of Lipschitzness of the mapping
$\psi_k$. 
In Section~4 we prove results on the local structure of $T_k(A)$ (Theorems~\ref{tdmnapl2} and \ref{T1+}) and, as a consequence, Corollary~\ref{count}.
In Section~5 we prove our main results, Theorem~\ref{T_main} and Corollary~\ref{bilip}.  
Finally, in Section~6 we present an alternative proof of Theorem~L.

\section{Preliminaries}

\subsection{Basic definitions}

If $X$ is a metric space and $A\subset X$, the closure, the interior and the boundary of $A$ are denoted
by $\overline A$,  $\interior A$ and $\partial A$, respectively. 
The symbols $B(c,r)$ and $\overline B(c,r)$ denote open and closed ball of center $c$ and radius $r$,
respectively. For $A \subset X$, we consider the distance function
 $d_A(x)= \dist(x,A):= \inf\{|x-a|:\ a \in A\},\  x \in X$, and the (multivalued) metric projection 
$P_A(x)= \{a \in A:\ |x-a|= d_A(x)\}$, $x\in X.$ The open $r$-neigbourhood ($r>0$) of $A\subset X$ is defined as $ B(A,r)= \{x\in X: d_A(x) < r\}.$

The scalar product of vectors $x,y\in\R^d$ is denoted by $\langle x, y \rangle$, and $|x|=\sqrt{\langle x,x\rangle}$ is the corresponding norm.
We write $S^{d-1}$ for the unit sphere in $\R^d$.
 By $\spa\, M$  and  $\conv M$ we denote the linear span  and the convex hull of the set $M\subset \R^d$.  Under a \emph{convex body} in $\R^d$ we understand a compact convex subset with nonempty interior.
The symbol $[x,y]$ denotes the (closed) segment if $x,y\in\R^d$. 
 If $u\in \R^d$, we set  $u^{\perp}:= \spa\, \{u\}^{\perp}$. If $V$ is a linear subspace of $\R^d$, we denote
 by $\pi_V$ the orthogonal projection on $V$.
  The symbol $\mathcal H^k $ stands for the $k$-dimensional
 Hausdorff measure. For sets $A\subset \R^d$, we denote by $\dim A$ and $\dim_H A$
 the topological and Hausdorff dimensions of $A$, respectively. We use the notation $e_i$ for the $i$th canonical basis vector in $\R^d$, $i=1,\ldots,d$.

By $\cL(\R^m,\R^n)$ we mean the space of linear mappings from $\R^m$ to $\R^n$ with the usual (operator) norm $\|\cdot\|$.

We use the notation $G(d,k)$ ($0\leq k\leq d$) for the set of all $k$-dimensional linear subspaces of $\R^d$ (Grassmannian). When writing $\R^k\subset\R^d$ ($k<d$), we identify $\R^k$ with $\spa\{e_1,\dots,e_k\}$.

%If $f$ is defined on an open subset of a normed linear space $X$, we use the notation $f'_+(x,v)$ for the one sided directional derivative of $f$ at $x$ in direction $v$. 

A mapping is called $K$-Lipschitz if it is Lipschitz with a (not necessarily minimal) constant $K$. 

If $u$ is a unit vector in $\R^d$ and $f$ a continuous function defined on the space $u^\perp$, we define the \emph{$u$-hypograph} and
\emph{$u$-epigraph} of $f$ as
\begin{align*}
\hyp_uf:=&\{w+tu:\, w\in u^\perp,\, t\leq f(w)\},\\
\epi_uf:=&\{w+tu:\, w\in u^\perp,\, t\geq f(w)\}.
\end{align*}
If $u=e_d$ we write simply $\hyp f$, $\epi f$ instead of $\hyp_{e_d}f$, $\epi_{e_d}f$.

\subsection{Functions, mappings and surfaces}
\label{subs_semiconv}
A mapping $F$ between Banach spaces $X$, $Y$ is called to be $C^{1,1}$ smooth if it has Lipschitz Fr\' echet derivative $DF$ on its domain.
A mapping  $F: U\subset \R^d \to \R^d$ is called a \emph{$C^{1,1}$-diffeomorphism}
if $F$ is injective, both $U$ and $F(U)$ are open and both $F$ and $F^{-1}$ are $C^{1,1}$ smooth.

\begin{remark}\label{stability}
\begin{enumerate}
    \item[(a)] The mean value theorem yields that each $C^1$ mapping is locally Lipschitz.
    \item[(b)] It is easy to see that  $C^{1,1}$ smooth mappings (with a common domain) are stable w.r.t.\ linear combinations.
    \item[(c)] Locally $C^{1,1}$ smooth functions (resp.\ mappings) are stable w.r.t.\ multiplications (resp.\ compositions); it follows e.g.\ from \cite[Propositions 128, 129]{HJ} and (a).
\end{enumerate}
\end{remark}

We will also use the following result (on a local $C^{1,1}$-diffeomorphism).

\begin{fact}\label{locdif}
Suppose that $G\subset \R^d$ is open, $F:G \to \R^d$ is $C^{1,1}$ smooth, $a\in G$ and $DF(a): \R^d\to \R^d$
 is bijective. Then there exists an open neighbourhood $U$ of $a$ such that $F|_U$ is a $C^{1,1}$-diffeomorphism.
\end{fact}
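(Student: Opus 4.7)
The plan is to reduce Fact~\ref{locdif} to the classical $C^1$ inverse function theorem and then upgrade the regularity of the local inverse by an explicit computation using the identity $A^{-1}-B^{-1}=A^{-1}(B-A)B^{-1}$ for invertible linear maps.

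First, since $F$ is $C^{1,1}$ smooth on $G$, in particular it is $C^1$ on $G$, and by hypothesis $DF(a)$ is a linear isomorphism of $\R^d$. The classical inverse function theorem yields an open neighbourhood $U$ of $a$, with $\overline U\subset G$ compact, such that $F|_U:U\to V:=F(U)$ is a $C^1$-diffeomorphism onto an open set $V$, and moreover $DF(x)$ is invertible for every $x\in U$. Denote by $g:V\to U$ the inverse. Then the chain rule gives $Dg(y)=(DF(g(y)))^{-1}$ for all $y\in V$. Clearly $F|_U$ itself is $C^{1,1}$ (being a restriction of $F$), so it remains only to show that $g$ is $C^{1,1}$, i.e.\ that $Dg$ is Lipschitz on $V$ (possibly after shrinking $U$).

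Next, I would shrink $U$ if necessary so that $U$ has compact closure in $G$ and so that the quantity $M:=\sup_{x\in U}\|DF(x)^{-1}\|$ is finite; this is possible because inversion is continuous on the open set of invertible linear maps and $DF$ is continuous. Let $L$ be the Lipschitz constant of $DF$ on $G$ (which exists by the $C^{1,1}$ assumption), and let $L'$ be the Lipschitz constant of $g$ on $V$ (which exists since $g$ is $C^1$ on $V$ with bounded derivative on the relatively compact set, using Remark~\ref{stability}(a)). Then for any $y_1,y_2\in V$, writing $A=DF(g(y_1))$ and $B=DF(g(y_2))$, the identity $A^{-1}-B^{-1}=A^{-1}(B-A)B^{-1}$ yields
\begin{equation*}
\|Dg(y_1)-Dg(y_2)\|\leq \|A^{-1}\|\cdot\|B-A\|\cdot\|B^{-1}\|\leq M^2\,L\,\|g(y_1)-g(y_2)\|\leq M^2LL'\,\|y_1-y_2\|.
\end{equation*}
Hence $Dg$ is Lipschitz on $V$, so $g$ is $C^{1,1}$, and $F|_U$ is a $C^{1,1}$-diffeomorphism as required.

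I do not expect any serious obstacle here; the only technical point is ensuring that, after an initial application of the inverse function theorem, one can further shrink $U$ to secure a uniform bound on $\|DF(x)^{-1}\|$ and a Lipschitz constant for $g$. Both follow from standard continuity/compactness arguments, and the $C^{1,1}$ character of $g$ then falls out of the one-line bound above.
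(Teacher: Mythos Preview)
Your argument is correct; the only mild slip is in claiming a global Lipschitz constant $L'$ for $g$ on $V$ directly from a bound on $\|Dg\|$, since $V$ need not be convex --- but this is harmless, as you may first shrink to a ball contained in $V$ and redefine $U$ accordingly, after which the mean value inequality applies.

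The paper does not actually prove Fact~\ref{locdif}: it simply records it as a known consequence of the classical $C^1$ inverse function theorem and cites \cite[Theorem~A.9]{Hor76} and \cite[Inverse Function Theorem]{Wells}. Your proof is precisely the standard argument underlying those references --- apply the $C^1$ inverse function theorem and then upgrade the inverse via the identity $A^{-1}-B^{-1}=A^{-1}(B-A)B^{-1}$ --- so there is no genuine difference in approach, only that you have written out what the paper leaves to a citation.
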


This result is a rather easy consequence of the classical $C^1$ local diffeomorpism theorem and
 is covered e.g. in  \cite[Theorem A.9]{Hor76}. It is also  an almost immediate consequence of 
 \cite[Inverse Function Theorem]{Wells} (the case $f\in B^1_1$).

Further, we will use the following well-known
 $C^{1,1}$ version of the Whitney's extension theorem.

\begin{fact}\label{Wh}
	Let $\emptyset \neq D \subset \R^m$ be an (arbitrary) set, $f:D \to \R^n$ and $c>0$. Let 
	$\vf: D \to \cL(\R^m,\R^n)$ be a mapping such that, for each $x,y \in D$,  the following inequalities hold:
	\begin{equation}\label{Whitney_1}
	 \|\vf(y) - \vf(x)\| \leq c |y-x|,
	\end{equation}
	\begin{equation}\label{Whitney_2}
	|f(y)-f(x) - \vf(x)(y-x)| \leq c |y-x|^2.
	\end{equation}
	Then there exists a $C^{1,1}$ smooth mapping $F: \R^m \to \R^n$ which extends $f$
	 and  fulfills $F'(x)= \vf(x)$, $x\in D$.
\end{fact}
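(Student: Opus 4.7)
The plan is to use the classical Whitney extension technique via a dyadic decomposition of the complement of $D$. First, I would reduce to the case where $D$ is closed: hypothesis \eqref{Whitney_1} is explicit Lipschitz continuity of $\vf$ on $D$, and \eqref{Whitney_2} shows that $f$ is locally Lipschitz on $D$, so $\vf$ and $f$ extend continuously to $\overline D$ with \eqref{Whitney_1} and \eqref{Whitney_2} surviving by continuity. Hence without loss of generality $D$ is closed.

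Next, I would apply the standard Whitney decomposition of $\Omega:=\R^m\setminus D$ into a countable family of dyadic closed cubes $\{Q_i\}$ with pairwise disjoint interiors and $\diam Q_i\sim\dist(Q_i,D)$, together with a smooth partition of unity $\{\phi_i\}$ subordinate to slight enlargements $Q_i^*$ satisfying the standard bounds $\|D\phi_i\|\leq C/\diam Q_i$ and $\|D^2\phi_i\|\leq C/(\diam Q_i)^2$, with bounded overlap. For each $i$ I fix $p_i\in D$ with $\dist(p_i,Q_i)\leq C\,\dist(Q_i,D)$, and define
\[
F(x):=\begin{cases} f(x), & x\in D,\\ \sum_i \phi_i(x)\bigl[f(p_i)+\vf(p_i)(x-p_i)\bigr], & x\in\Omega. \end{cases}
\]

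To see that $F$ is $C^1$ with $DF=\vf$ on $D$, the core computation is that for $x\in\Omega$ and any $p\in D$ with $|p-x|\leq C\,\dist(x,D)$, the product rule together with $\sum_i\phi_i\equiv 1$ and $\sum_i D\phi_i\equiv 0$ on $\Omega$ lets me express $DF(x)-\vf(p)$ as $\sum_i\phi_i(x)\bigl(\vf(p_i)-\vf(p)\bigr)$, which is controlled by \eqref{Whitney_1} since $|p_i-p|\lesssim |x-p|$, plus a second term involving $D\phi_i(x)$ multiplied by the quadratic errors $f(p_i)-f(p)-\vf(p)(p_i-p)$, controlled by \eqref{Whitney_2} together with $\|D\phi_i\|\lesssim |x-p|^{-1}$. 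Summing over the (finitely many) indices with $\phi_i(x)\neq 0$ yields $\|DF(x)-\vf(p)\|\leq C'c\,|x-p|$; letting $x\to x_0\in D$ with $p=x_0$ then gives $DF(x)\to\vf(x_0)$, so $F$ is $C^1$ on $\R^m$ with $DF|_D=\vf$.

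The Lipschitz continuity of $DF$ on $\R^m$ then follows by case analysis: if $x,y\in D$ it is exactly \eqref{Whitney_1}; if $x\in D$, $y\in\Omega$ it is the boundary estimate just obtained (with $p=x$); and if $x,y\in\Omega$ one distinguishes $|x-y|\geq\tfrac12\dist(x,D)$ (passing through the nearest boundary point) from $|x-y|<\tfrac12\dist(x,D)$, where one applies the mean value theorem on the segment and combines the $\|D^2\phi_i\|\leq C(\diam Q_i)^{-2}$ bound with the same cancellation trick. The main obstacle is this last subcase: one must absorb the inverse-quadratic blow-up in $\diam Q_i$ coming from $D^2\phi_i$ against the quadratic smallness provided by \eqref{Whitney_2}, which requires subtracting a common anchor $\vf(p)$, $f(p)$ inside the sum in order to exploit $\sum_i D^2\phi_i\equiv 0$ on $\Omega$ in a quantitative way.
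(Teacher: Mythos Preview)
Your sketch is correct: this is precisely the classical Whitney extension argument specialized to the $C^{1,1}$ case, and the steps you outline (reduction to closed $D$, Whitney cube decomposition with partition of unity, the anchored cancellation using $\sum_i D\phi_i\equiv 0$ and $\sum_i D^2\phi_i\equiv 0$, and the three-case Lipschitz estimate for $DF$) all go through as you describe.

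However, the paper takes an entirely different route: it does not prove Fact~\ref{Wh} at all but simply cites it from the literature. The real-valued case $n=1$ is identified with \cite[Theorem~2]{JKZ} (for $X=\R^m$ and modulus $\omega(t)=t$), and the vector-valued case is obtained from it coordinate-wise. So the comparison is ``direct construction'' versus ``black-box citation plus componentwise reduction''. Your approach is self-contained and shows exactly where the quadratic hypothesis \eqref{Whitney_2} is consumed (against the $(\diam Q_i)^{-2}$ blow-up of $D^2\phi_i$), while the paper's approach is shorter and avoids reproducing a standard argument. Either is acceptable; for a paper that only \emph{uses} the extension theorem, the citation is the natural choice.
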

Conditions \eqref{Whitney_1} and \eqref{Whitney_2} are (first order) Whitney-Glaeser conditions
 of \cite{Gl}. The real-valued case ($n=1$) coincides with \cite[Theorem 2]{JKZ} for $X:=\R^m$
 and $\omega(t)=t,\ t>0$. The case of general $n$ follows easily by a standard way from the case $n=1$.

We will use also the following extension result.

\begin{fact}\label{Wh2}
Let $X,Y$ be finite dimensional Hilbert spaces, $U \subset X$ be an open convex set and $f: U \to Y$ be $C^{1,1}$ smooth. Then $f$ can
 be extended to a $C^{1,1}$ smooth $F: X\to Y$.
\end{fact}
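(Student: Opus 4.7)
The plan is to reduce Fact~\ref{Wh2} to the Whitney-Glaeser extension result stated in Fact~\ref{Wh}. Since $X$ and $Y$ are finite dimensional Hilbert spaces, I would first fix orthonormal bases and identify them isometrically with some $\R^m$ and $\R^n$, so that Fact~\ref{Wh} applies directly. The task then becomes verifying that the pair $(f,\varphi)$ with $\varphi:=f'$ satisfies the two Whitney-Glaeser conditions \eqref{Whitney_1}-\eqref{Whitney_2} on $D:=U$.

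The first condition is immediate from the $C^{1,1}$ assumption: by definition, the Fr\'echet derivative $f'$ is Lipschitz on $U$ with some constant $L$, which is exactly $\|\varphi(y)-\varphi(x)\|\leq L|y-x|$ for all $x,y\in U$. For the second condition I would use the convexity of $U$ in an essential way: for any $x,y\in U$, the whole segment $[x,y]$ lies in $U$, so the fundamental theorem of calculus applied to $t\mapsto f(x+t(y-x))$ gives
\[
f(y)-f(x)-f'(x)(y-x)=\int_0^1\bigl[f'(x+t(y-x))-f'(x)\bigr](y-x)\,dt.
\]
Estimating the integrand by the Lipschitz constant of $f'$ yields the quadratic bound $|f(y)-f(x)-f'(x)(y-x)|\leq\tfrac{L}{2}|y-x|^2$. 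Taking $c:=L$ makes both Whitney-Glaeser inequalities hold simultaneously on $D=U$.

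With the two conditions verified, Fact~\ref{Wh} produces a $C^{1,1}$ smooth mapping $F:\R^m\to\R^n$, i.e.\ $F:X\to Y$, with $F|_U=f$ and $F'|_U=f'$, which is precisely the extension claimed in Fact~\ref{Wh2}. There is no serious obstacle in the argument; the only points worth keeping in mind are that Fact~\ref{Wh} really yields an honest extension (the statement says ``extends $f$''), and that the convexity of $U$ is used exactly to make the mean-value estimate available for every pair of points in $U$ — without it one would have to work with paths inside $U$ and the global quadratic estimate would in general fail.
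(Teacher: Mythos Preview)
Your argument is correct and complete: the convexity of $U$ gives the mean-value estimate that yields \eqref{Whitney_2}, while \eqref{Whitney_1} is just the Lipschitz bound on $f'$, and Fact~\ref{Wh} then furnishes the extension.

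Your route differs from the paper's. The paper does not reduce Fact~\ref{Wh2} to Fact~\ref{Wh}; instead it invokes another result from the same reference \cite{JKZ} (Corollary~47 there, applied with $\omega(t)=t$) for the scalar case $Y=\R$, and then passes to $Y=\R^n$ coordinate-wise. Your approach has the advantage of being fully self-contained within the paper: once Fact~\ref{Wh} is granted, no further external citation is needed, and the role of the convexity hypothesis becomes transparent. The paper's approach, on the other hand, outsources the work to a ready-made corollary, which is shorter to state but requires the reader to consult \cite{JKZ} a second time.
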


The case $Y=\R$ is an immediate consequence of  \cite[Corollary 47]{JKZ} (applied with 
$\omega(t)=t,\ t>0$). For the case $Y=\R^n$ (which is clearly equivalent with the general case) we apply this result coordinate-wisely.

\begin{definition}\label{plochy}  
Let $0<k<d$.	 We say that $A \subset \R^d$ is a 
\begin{enumerate}
\item[(i)] \emph{$k$-dimensional $C^{1,1}$ g-surface} if there exist a $k$-dimensional subspace $W$ of $\R^d$ and a $C^{1,1}$  mapping $\vf: W \to W^{\perp}$ such that $A = \{w+\vf(w): \ w \in W\}$,
\item[(ii)] \emph{$k$-dimensional $C^{1,1}$ surface} if for any $a\in A$ there exist $\ep>0$ and a $k$-dimensional $C^{1,1}$ g-surface $S\subset\R^d$ such that $A\cap B(a,\ep)=S\cap B(a,\ep)$.	
\end{enumerate}
\end{definition}

\begin{remark}
$C^{1,1}$ g-surfaces are special types of $C^{1,1}$ surfaces (i.e., $C^{1,1}$ submanifolds of $\R^d$) which can be represented as graphs of $C^{1,1}$ mappings. In \cite{RZ17}, the term ``surface'' was used for ``g-surface''. All our results involving $C^{1,1}$ surfaces (Theorem~\ref{tdmnapl2}, Corollary~\ref{count} and Theorem~\ref{T1+}) could be equivalently formulated with $C^{1,1}$ g-surfaces.
\end{remark}

 One of several natural equivalent definitions  (cf. \cite[Definition 1.1.1 and Proposition 1.1.3]{CS}) of semiconcavity  reads as follows. We formulate it in the generality we need.
 
 \begin{definition}\label{semieuk} \rm
A real function $u$ on an open convex subset $C$ of a finite-dimen\-sio\-nal Hilbert space $X$  is called {\it semiconcave}  if there exists $c\geq 0$ such that
  the function $g(x) = u(x) - (c/2) |x|^2$ is  concave on $C$.
	A real function 	$v$ on $C$ is called {\it semiconvex}  if $-v$ is semiconcave.
\end{definition}

Note that if   $C$ is as 
in the above definition and $u$ is a function on $C$, then (see, e.g., \cite[Proposition 2.1.2 and Corollary 3.3.8]{CS})
\begin{equation}\label{sscjj}
\text{$u$ is $C^{1,1}$ if and only if $u$ is both semiconcave and semiconvex.}
\end{equation}

\begin{remark}  \label{rem_semicon}
If $f$ is semiconcave (semiconvex) on $(-v,v)$ then $\tilde{f}: t\mapsto f(-t)$ is semiconcave (semiconvex, resp.) as well. This follows directly from the definition.
\end{remark}

\begin{remark}  \label{RemFu}
    We will use results from \cite{Fu85} where a slightly different definition of semiconcavity is used: a function $f$ defined on an open convex set $C\subset\R^d$ is semiconcave in the sense of \cite{Fu85} if and only if it is \emph{locally} semiconcave in our sense, and it is semiconcave in our sense if and only if $\operatorname{sc}(f,C)<\infty$, see \cite[Definitions~1.3]{Fu85}.
\end{remark}

Slightly reformulating the standard definition (see e.g. \cite[Definition 1.30]{ThI}), we say that
a set-valued mapping (or multimapping) $F:x\mapsto F(x)\subset \R^d$ defined on a set $C\subset\R^d$ is \emph{lower semicontinuous} (or inner semicontinuous) if for any $x_0\in C$, $v_0\in F(x_0)$ and 
$\ep>0$ there exists $\delta>0$ such that for any $x\in C\cap B(x_0,\delta)$ we have $F(x)\cap B(v_0,\ep)\neq\emptyset$. For convex-valued multimappings, the following result holds (see \cite[Theorem~5.9~(a)]{RW04}):

\begin{lemma}  \label{conv_val}
    Let $F$ be a lower semicontinuous set-valued mapping defined on $C\subset\R^d$ with \emph{convex} values $F(x)\subset\R^d$, $x\in C$. Then for any $x_0\in C$ and $v_0\in\INt F(x_0)$ there exist $\ep,\delta>0$ such that 
    $$x\in C\cap B(x_0,\delta)\implies B(v_0,\ep)\subset F(x).$$
\end{lemma}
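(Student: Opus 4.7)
My plan is to prove the lemma by the classical ``simplex trick'': approximate a neighborhood of $v_0$ from inside $F(x_0)$ by a nondegenerate simplex whose vertices are slightly perturbed without destroying the inclusion, and then use lower semicontinuity coordinate by coordinate (vertex by vertex) to find perturbed vertices in $F(x)$ whose convex hull still contains a neighborhood of $v_0$.

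\medskip

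More concretely, since $v_0 \in \INt F(x_0)$, I can pick $d+1$ affinely independent points $v_1,\dots,v_{d+1} \in F(x_0)$ so that $v_0$ lies in the topological interior of the simplex $S := \conv\{v_1,\dots,v_{d+1}\}$. A brief computation (writing $v_0$ as a convex combination with strictly positive coefficients and using that $\conv$ is continuous with respect to the Hausdorff metric on a fixed finite family of points) shows that there exist $\ep > 0$ and $\eta > 0$ with the following stability property: whenever $w_1,\dots,w_{d+1} \in \R^d$ satisfy $|w_i - v_i| < \eta$ for all $i$, then
\[
\overline B(v_0,\ep) \subset \conv\{w_1,\dots,w_{d+1}\}.
\]

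\medskip

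Next I apply the lower semicontinuity of $F$ to each vertex $v_i$: for every $i \in \{1,\dots,d+1\}$ there is $\delta_i > 0$ such that for each $x \in C \cap B(x_0,\delta_i)$ the intersection $F(x) \cap B(v_i,\eta)$ is nonempty, and I fix some $w_i(x)$ in this intersection. Setting $\delta := \min_i \delta_i$, for any $x \in C \cap B(x_0,\delta)$ the perturbed vertices $w_1(x),\dots,w_{d+1}(x)$ all lie in $F(x)$ and fall within $\eta$ of the original $v_i$, hence by the stability property above
\[
\overline B(v_0,\ep) \subset \conv\{w_1(x),\dots,w_{d+1}(x)\} \subset F(x),
\]
where the last inclusion uses convexity of $F(x)$. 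This gives the claimed $\ep$ and $\delta$.

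\medskip

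I do not expect any genuine obstacle here; the only mild technical point is quantifying the robustness of the inclusion ``$v_0$ lies in the interior of $\conv\{v_1,\dots,v_{d+1}\}$'' under small perturbations of the vertices, which is a standard uniform-continuity argument in the barycentric coordinates of the nondegenerate simplex.
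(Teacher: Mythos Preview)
Your argument is correct: the simplex trick with the stability of barycentric coordinates under small perturbations of the vertices, combined with lower semicontinuity applied vertex by vertex and the convexity of $F(x)$, yields exactly the claimed $\ep$ and $\delta$. The paper does not actually prove this lemma but simply cites \cite[Theorem~5.9~(a)]{RW04}; your proof is the standard direct argument behind that reference, so there is nothing to compare.
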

        
\subsection{Basic and auxiliary results on sets of positive reach}\label{bapr}
Given a set $A\subset\R^d$, we denote (following \cite{Fed59}) by $\Unp A$ the set of all points 
 $z\in\R^d$ for which the metric projection
$P_A(z):=\{ a\in A:\, \dist(z,A)=|z-a|\}$
is a singleton. 
%We will work mainly with the corresponding mapping $\Pi_A: \Unp A \to A$ (such that $P_A(z)= \{\Pi_A(z)\},\ z \in \Unp A$. 

If $A\subset \R^d$ and $a\in A$, we define (with $B(a,0):= \emptyset$)
$$\reach(A,a):=\sup\{r\geq 0:\, B(a,r)\subset \Unp A\},$$
and  
$$\reach A:=\inf_{a\in A}\reach (A,a).$$
Obviously, if $\reach A >0$, then $A$ is closed. 
Note that for $A\subset\R^d$ closed, $\reach A$ is the supremum of all $r\geq 0$ such that $x\in\Unp A$ whenever $\dist(x,A)\leq r$. Further, it is well-known that
 $\reach A = \infty$ if and only if $A$ is closed convex (cf. \cite[Remark 4.2]{Fed59}).

\begin{remark}   \label{PR-var}
    The notion of positive reach in $\R^d$ is equivalent to that of \emph{uniform prox-regularity} (i.e.,
	$r$-prox-regularity for some $r>0$)	from \cite{ThII}, and for closed sets, also to that of \emph{weak convexity} from \cite{Vial}. Consequently, we can use results from \cite{Vial} and  \cite{ThII}.

	More precisely, \cite[Theorem 15.28, (a), (y)]{ThII} shows that $A\subset \R^d$ is
	 $r$-prox regular if and only if $\reach A \geq r$. 
	
	Further, a closed set $A\subset \R^d$ is weakly convex w.r.t. $0<r< \infty$ in the sense of \cite{Vial}
	 (i.e., has Vial property with constant $r$ in the terminology of \cite[Definition 16.13]{ThII}) if and
	 only if $A$ is $r$-prox regular (i.e. $\reach A \geq r$), see  \cite[Theorem 16.15]{ThII}. 
	 
\end{remark}

If $A\subset\R^d$ and $a\in A$, we denote by $\Tan(A,a)$ the set of all tangent vectors to $A$ at $a$ 
(i.e., $u\in\Tan(A,a)$ if and only if $u=0$ or there exist $a\neq a_i\in A$ and $r_i>0$ such that $a_i\to a$ and $r_i(a_i-a)\to u$, $i\to\infty$) 
which is clearly a closed cone. 

By $\Tan_C(A,x)$ we denote the \emph{Clarke tangent cone} of $A$ at $x\in A$, which is always contained in
 $\Tan(A,x)$. The set $A$ is called \emph{tangentially regular at $x$} if $\Tan_C(A,x)=\Tan(A,x)$.

A vector $v \in \R^d$ belongs to $\Tan_C(A,x)$ (see \cite[Definition 6.25]{RW04}, where elements
of  $\Tan_C(A,x)$ are called ``regular tangent vectors''; cf.\ also \cite[Theorem 2.2]{ThI}) if $v$ has the following property:

\begin{align}\label{Clarke}
\text{ for all }x_i\in A, x_i\to x \text{ and }\tau_i\searrow 0
 &\text{ there exist points }  \tilde  x_i\in A\\ &\text{such that }  \frac{\tilde  x_i - x_i}{\tau_i} \to v.   \nonumber
\end{align}

The normal cone of $A$ at $a\in A$ is defined as the dual cone
\begin{equation}   \label{norcone}
    \Nor (A,a):=\{u\in\R^d:\, \langle u,v\rangle \leq 0\text{ for any }v\in\Tan(A,a)\}.
\end{equation}

In the following lemma we recall some well-known facts on sets with positive reach.

\begin{lemma}  \label{P_PR}
Assume that $A\subset\R^d$, $\reach A>0$ and $a\in A$.
\begin{enumerate}
\item[{\rm (i)}] The function $x\mapsto\reach(A,x)$ is either identically equal to $\infty$, or finite and  $1$-Lipschitz on $A$.
\item[{\rm (ii)}] The tangent cone $\Tan(A,a)$ is convex.
\item[{\rm (iii)}] $\Tan(A,a)\neq\R^d$ if and only if $a\in\partial A$.
\item[{\rm (iv)}] 
$\dim A=\dim_HA$ and $\dim(\Tan(A,a))\leq\dim A$. 
\item[{\rm (v)}] If $\diam A<\,\reach A$ then $A$ is connected.
\item[{\rm (vi)}] $A$ is tangentially regular at $a$.
\item[{\rm (vii)}] The multivalued mapping $x\mapsto\Tan(A,x)$ is lower semicontinuous on $A$.
\end{enumerate}
\end{lemma}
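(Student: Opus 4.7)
The plan is to treat each item essentially as a citation to a classical source, with brief justifications where the reference is not perfectly direct. Since the lemma is explicitly marked as a collection of well-known facts, the proof should be organized as a bookkeeping exercise rather than original arguments.

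For (i), I would appeal to Federer's \cite{Fed59} Remark 4.14 (equivalently, \cite[Thm.~4.8(8)]{Fed59}): if $\reach A$ is finite, then for any $x,y\in A$ with $r:=\reach(A,x)$, every point of the open ball $B(y,r-|x-y|)\subset B(x,r)$ has a unique nearest point in $A$, giving $\reach(A,y)\geq r-|x-y|$ and hence $1$-Lipschitzness by symmetry; the dichotomy $\reach A=\infty$ iff $A$ is convex follows from Federer's characterisation of convexity via infinite reach. For (ii), I would cite Federer \cite[Thm.~4.8(12)]{Fed59}, which states exactly the convexity of $\Tan(A,a)$. Item (vi) (tangential regularity) is in Colombo--Thibault \cite{ThII} (for prox-regular sets, which by Remark~\ref{PR-var} coincide with positive reach sets); alternatively it is contained in \cite[Thm.~4.8]{Fed59}. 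Item (vii) (lower semicontinuity of the tangent multimap) is also part of Federer's Theorem 4.8 or follows from the general fact that $x\mapsto\Tan(A,x)$ is the Kuratowski liminf of the tangent cones and agrees with the Clarke tangent cone (by (vi)), which is well-known to be lower semicontinuous.

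For (iii), the "only if" direction is trivial: if $a\in\interior A$ then every direction is tangential, so $\Tan(A,a)=\R^d$. The "if" direction I would prove directly: assuming $\Tan(A,a)=\R^d$, the convexity in (ii) shows that the normal cone \eqref{norcone} is $\{0\}$, and by the standard characterisation of boundary points of positive reach sets (Federer \cite[Cor.~4.9]{Fed59}), $a\in\interior A$. For (iv), the equality $\dim A=\dim_HA$ and the inequality $\dim\Tan(A,a)\leq\dim A$ come from Federer's \cite[Remark~4.15(3)]{Fed59}: a set of positive reach is locally a Lipschitz image of a subset of its tangent cone, so topological and Hausdorff dimensions coincide, and the tangent cone cannot exceed the local dimension.

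For (v), I would give a short argument: set $r:=\reach A$ and fix $a_0\in A$; then $A\subset\overline B(a_0,\diam A)\subset B(a_0,r)\subset\Unp A$. The convex ball $\overline B(a_0,\diam A)$ is connected, the metric projection $P_A$ is single-valued and continuous on $\Unp A$ (by Federer \cite[Thm.~4.8(4)]{Fed59}), and $P_A$ is the identity on $A$; hence $A=P_A(\overline B(a_0,\diam A))$ is the continuous image of a connected set, and therefore connected.

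The only real obstacle is to check that the definitions used in the cited sources match ours exactly; the Clarke tangent cone formulation \eqref{Clarke} used in (vi) is stated in \cite[Thm.~2.2]{ThI} and matches the classical Federer normal/tangent cone structure, and tangential regularity for uniformly prox-regular sets is proved there. All other identifications are immediate.
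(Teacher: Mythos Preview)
Your proposal is correct and follows essentially the same approach as the paper: both treat the lemma as a collection of citations to standard sources (Federer, Thibault, Rockafellar--Wets, etc.), with brief direct arguments where needed. The only notable difference is that you supply an explicit projection argument for (v) (writing $A=P_A(\overline B(a_0,\diam A))$ as the continuous image of a ball) whereas the paper simply cites \cite[Remark~4.15(1)]{Fed59}; and in (vii) both you and the paper derive lower semicontinuity from (vi), the paper pointing explicitly to \cite[Corollary~6.29]{RW04}.
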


\begin{proof}
For (i) and (ii) see \cite[Proposition 3.1]{RZ17}. Assertion (iii) follows e.g.\ from \cite[Proposition 3.1~(vi)]{RZ17} and \eqref{norcone}. Statement (iv) was proved by Federer \cite{Fed59}, see \cite[(3.2), (3.3)]{RZ17}.  Assertion (v) follows from \cite[Remark~4.15~(1)]{Fed59} (cf.\ also \cite[Lemma~3.4~(ii)]{RZ17}). For (vi), see \cite[Proposition~3.8]{Vial} or \cite[Proposition 15.13 (b)]{ThII} (together with Remark~\ref{PR-var}), and (vii) follows from (vi) using \cite[Corollary~6.29]{RW04}.
\end{proof}

Note that it follows easily from Lemma~\ref{P_PR}~(i) that for a compact set $K\subset\R^d$ we have
\begin{equation}  \label{reach_compact}
    \reach(K,a)>0\text{ for all }a\in K\iff\reach K>0.
\end{equation}

We will also use  the following Federer's characterization of sets with positive reach (see \cite[Theorem~4.18]{Fed59}). 
\begin{proposition}\label{fedtan}
If $A\subset \R^d$ is a closed set and $0<r<\infty$, then the following two conditions are equivalent:
\begin{enumerate}
\item [{\rm (i)}]
$\reach A \geq r.$
\item[{\rm (ii)}]
$\dist(b-a,\Tan(A,a)) \leq |b-a|^2/(2r)$\ \ whenever\ \ $a, \,b\in A$. 
\end{enumerate}
\end{proposition}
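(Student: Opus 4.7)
The strategy is to prove the two directions separately, with the geometric ``ball condition''---namely, $u \in \Nor(A,a)$ with $|u|=1$ forces $B(a+ru,r) \cap A = \emptyset$---serving as the bridge, since for $b \in A$ the inequality $|b-(a+ru)|^2 \geq r^2$ expands to
\[
\langle b - a, u\rangle \leq \frac{|b-a|^2}{2r}.
\]

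For (i)\,$\Rightarrow$\,(ii): I would derive the ball condition from $\reach A \geq r$ using the continuity of the metric projection on $\Unp A \supset B(A,r)$ together with the characterization (available for sets of positive reach) of $\Nor(A,a)$ as the set of proximal normal directions. Concretely, for a unit $u \in \Nor(A,a)$ a routine ``open-and-closed'' argument on the set $\{s \in [0,r] : a \in P_A(a+su)\}$ propagates the inclusion of $a$ in the projection from small $s$ (where it holds by the proximal-normal characterization) all the way up to $s = r$, whence in the limit $B(a+ru, r) \cap A = \emptyset$. Having obtained the ball condition, I would then invoke Lemma~\ref{P_PR}(ii) to see that $T := \Tan(A,a)$ is a closed convex cone with polar $\Nor(A,a)$, and apply the standard cone duality
\[
\dist(v, T) = \sup\bigl\{\langle v, u\rangle : u \in \Nor(A,a),\ |u| \leq 1\bigr\}
\]
with $v := b - a$, combined with the (homogeneous) extension of the bound to $|u| \leq 1$, to conclude (ii).

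For (ii)\,$\Rightarrow$\,(i): I would argue directly, without a detour through the ball condition. Fix $a \in A$ and $x \in B(a, r)$; the goal is $|P_A(x)| = 1$. Pick any $a_0 \in P_A(x)$ (it exists because $A$ is closed), set $t := |x - a_0| \leq |x - a| < r$ and $u_0 := (x - a_0)/t$, and observe that a short first-variation argument using $|x - a'|^2 \geq t^2$ for $a' \in A$ gives $u_0 \in \Nor(A, a_0)$. For an arbitrary $b \in A$ take a point $v^*$ in $\Tan(A, a_0)$ nearest to $b - a_0$: by (ii) applied at $a_0$, $|(b-a_0) - v^*| \leq |b - a_0|^2 / (2r)$, while $u_0 \in \Nor(A, a_0)$ and $v^* \in \Tan(A, a_0)$ force $\langle v^*, u_0 \rangle \leq 0$, so
\[
\langle b - a_0, u_0 \rangle = \langle v^*, u_0 \rangle + \langle (b - a_0) - v^*, u_0 \rangle \leq \frac{|b - a_0|^2}{2r}.
\]
Substituting into $|x - b|^2 = t^2 - 2t \langle u_0, b - a_0\rangle + |b - a_0|^2$ yields
\[
|x - b|^2 \geq t^2 + |b - a_0|^2 \left(1 - \frac{t}{r}\right) > t^2 = |x - a_0|^2
\]
whenever $b \neq a_0$, hence $a_0$ is the unique nearest point in $A$ to $x$.

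The main obstacle is the ball-condition step in (i)\,$\Rightarrow$\,(ii): one must reconcile the dual (polar) definition of $\Nor(A,a)$ used in this paper with the proximal characterization needed to seed the propagation argument along the normal segment $\{a + su : s \in [0,r]\}$. Beyond this, everything reduces to elementary linear algebra and closed-convex-cone duality. A convenient feature of the plan for (ii)\,$\Rightarrow$\,(i) is that it uses neither convexity of $\Tan(A, a_0)$ nor any ball condition---only the polarity $v^* \in \Tan(A,a_0)$, $u_0 \in \Nor(A,a_0)$---which keeps this direction logically independent of Lemma~\ref{P_PR}(ii).
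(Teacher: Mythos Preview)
The paper does not prove this proposition at all: it merely records it as ``Federer's characterization of sets with positive reach'' and cites \cite[Theorem~4.18]{Fed59}. So there is no argument in the paper to compare against.

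Your sketch is essentially the standard route (close to Federer's own). A few remarks on the one point you flagged. In the direction (i)$\Rightarrow$(ii), the open--and--closed argument on $\{s\in[0,r):a\in P_A(a+su)\}$ indeed works, but the ``open'' half is not quite routine: closedness is immediate from the continuity of $P_A$ on $\{d_A<r\}$, and the ``segment fact'' (if $a=P_A(x)$ then $a=P_A(z)$ for every $z\in[a,x]$) gives that the set is an initial interval, but pushing past a given $s_0$ requires an extra idea---e.g.\ using the $C^1$-smoothness of $d_A$ on $\{0<d_A<r\}$ (so that $s\mapsto d_A(a+su)$ has derivative $\langle (a+su-P_A(a+su))/d_A(a+su),u\rangle\le 1$ with equality forcing $P_A(a+su)=a$), or invoking Federer's \cite[Theorem~4.8(6)]{Fed59} directly. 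Either way this step leans on exactly the ``polar normal $=$ proximal normal'' identification (Federer's 4.8(12)) that you correctly isolate as the crux; note that the paper's definition~\eqref{norcone} of $\Nor(A,a)$ is the polar one, so this reconciliation is genuinely needed and is where the hypothesis $\reach A\ge r$ does its work.

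Your argument for (ii)$\Rightarrow$(i) is clean and self-contained; it uses only the polarity $\langle v^*,u_0\rangle\le 0$ and the definition of $\Nor$ via first variation, so there is no hidden dependence on Lemma~\ref{P_PR}(ii) or on any positive-reach hypothesis, exactly as you say.
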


\begin{corollary}\label{uhlovy}
Let  $A\subset \R^d$,  $0<r<\reach A$, $a,b \in A$ and  $0< |b-a|<r$. Then there exists $0\neq v\in\Tan(A,a)$ such that
\begin{equation}\label{feuh}
	 \angle(b-a,v)<\frac{\pi}{6}\cdot \frac{|b-a|}{r}.
\end{equation}
If, moreover, $\Tan(A,a)$ is one-dimensional then, denoting by $\Pi$ the orthogonal projection onto $a+\spa(\Tan(A,a))$, we have $\Pi(b)-a\in\Tan(A,a)$ and
\begin{equation} \label{tan_proj}
|\Pi(b)-a|\geq\frac{\sqrt{3}}{2}|b-a|\geq\sqrt{3}|b-\Pi(b)|.
\end{equation}
\end{corollary}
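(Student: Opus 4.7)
The natural starting point is Federer's tangential characterization (Proposition~\ref{fedtan}): from $\reach A>r$ and $a,b\in A$ we obtain
$$\dist(b-a,\Tan(A,a))\le\frac{|b-a|^2}{2r}.$$
Let $u:=b-a$ and, using the convexity of the tangent cone (Lemma~\ref{P_PR}~(ii)), let $v$ be the (unique) metric projection of $u$ onto the closed convex cone $\Tan(A,a)$. Then $|u-v|\le|u|^2/(2r)<|u|/2$ (since $|u|<r$), so $v\neq 0$. A standard property of the projection onto a convex cone gives $\langle u-v,v\rangle=0$, hence by Pythagoras
$$\sin\bigl(\angle(u,v)\bigr)=\frac{|u-v|}{|u|}\le\frac{|u|}{2r}<\frac12,$$
so $\theta:=\angle(u,v)\in[0,\pi/6)$.

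To upgrade the inequality $\sin\theta\le|u|/(2r)$ into the sharper bound \eqref{feuh}, I will verify the elementary estimate $\theta\le(\pi/3)\sin\theta$ on $[0,\pi/6]$: the function $h(\phi)=(\pi/3)\sin\phi-\phi$ vanishes at both endpoints (since $\sin(\pi/6)=1/2$) and is concave-type on the interval (its derivative $\tfrac{\pi}{3}\cos\phi-1$ has a single zero in $(0,\pi/6)$ where $h$ attains its positive maximum), so $h\ge 0$ there. Combining this with the estimate for $\sin\theta$ yields
$$\theta\le\frac{\pi}{3}\sin\theta\le\frac{\pi}{3}\cdot\frac{|u|}{2r}=\frac{\pi}{6}\cdot\frac{|b-a|}{r},$$
and the inequality is strict: if $\theta=0$ the bound is trivially strict (since $|b-a|>0$), and if $\theta\in(0,\pi/6)$ then $h(\theta)>0$ gives strictness directly. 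This proves \eqref{feuh}.

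For the ``moreover'' part, suppose $L:=\spa(\Tan(A,a))$ is a line. Because $\Tan(A,a)$ is a closed convex cone of positive dimension spanning $L$, it is either all of $L$ or a closed half-line in $L$; in both cases it contains the positive $v$-direction. Since $v\in L$, the line segment from $0$ to $u$ makes angle $\theta_L\le\theta<\pi/6$ with $L$, so
$$|\pi(b)-a|=|\pi_L(u)|=|u|\cos\theta_L\ge|u|\cos(\pi/6)=\frac{\sqrt3}{2}|b-a|.$$
To see $\pi(b)-a\in\Tan(A,a)$, the case $\Tan(A,a)=L$ is trivial; in the half-line case $\Tan(A,a)=\R_{\ge 0}v_0$ we use $\langle u,v\rangle=|u||v|\cos\theta>0$, which implies $\langle u,v_0\rangle>0$, so $\pi_L(u)=\langle u,v_0\rangle v_0$ is a nonnegative multiple of $v_0$. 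Finally, the second inequality of \eqref{tan_proj} follows from Pythagoras: $|b-\pi(b)|^2=|b-a|^2-|\pi(b)-a|^2\le|b-a|^2/4$, hence $\sqrt{3}|b-\pi(b)|\le(\sqrt{3}/2)|b-a|$.

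The only non-routine step is the calculus lemma $\theta\le(\pi/3)\sin\theta$ on $[0,\pi/6]$, which is what lets us convert the quadratic-in-$|b-a|$ Federer estimate into a linear-in-$|b-a|$ angular bound with the sharp constant $\pi/6$; everything else is the projection geometry of convex cones together with the strict size assumption $|b-a|<r$.
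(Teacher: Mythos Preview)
Your proof is correct and follows essentially the same approach as the paper: project $b-a$ onto the convex tangent cone, use the orthogonality property of that projection to obtain a right triangle, and read off the angle and length estimates. The paper is terser, simply stating that \eqref{feuh} and \eqref{tan_proj} ``follow easily'' from the right triangle, whereas you spell out the needed inequality $\theta\le(\pi/3)\sin\theta$ on $[0,\pi/6]$ (which is exactly what is required, and your concavity argument for it is fine). One small simplification in the ``moreover'' part: since you already know $v\neq 0$, $v\in L$, and $\langle u-v,v\rangle=0$, it follows immediately that $v=\pi_L(u)=\pi(b)-a$; this directly gives $\pi(b)-a\in\Tan(A,a)$ and $|\pi(b)-a|=|u|\cos\theta$, making the auxiliary angle $\theta_L$ and the case analysis on the half-line unnecessary.
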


\begin{proof}
    Since $a+\Tan(A,a)$ is a closed set, we can choose $c\in P_{a+\Tan(A,a)}(b)$. 
    We have from Proposition~\ref{fedtan}
    \begin{equation} \label{fedeq}
    |b-c|\leq\frac{|b-a|^2}{2r}\leq\frac 12|b-a|.
    \end{equation}
    It follows that $c\neq a$. Clearly also $c\in P_{\{a+tv:\, t\geq 0\}}(b)$, where $v:=c-a\in\Tan(A,a)$,  and, as $c\neq a$, $c$ is also the orthogonal projection of $b$ onto the line $\{a+tv:\, t\in\R\}$ (and so $c=\Pi(b)$ if $\dim\Tan(A,a)=1$), hence $b-c\perp v$. Both \eqref{feuh} and \eqref{tan_proj} are trivial if $b=c$ and follow easily from \eqref{fedeq} using the right triangle with vertices $a,b,c$ if $b\neq c$.
\end{proof}

\begin{remark}  \label{rem_reach}
If $A\subset\R^k\subset\R^d$ ($1\leq k< d$) then $A$ has positive reach in $\R^k$ if and only if it has positive reach in $\R^d$. This well-known fact follows directly from Proposition~\ref{fedtan} (since both $b-a$ and $\Tan(A,a)$ lie in $\R^k$ if $a,b\in A$).
\end{remark}

We will need also  the following essentially well-known fact.

\begin{lemma}\label{local}
For $A \subset \R^d$ and $a \in A$, the following are equivalent.
\begin{enumerate}
\item[{\rm (i)}]  $  \reach(A,a) > 0.$
\item[{\rm (ii)}]  There exists $\delta>0$ such that  $A \cap \overline{B}(a,\delta)$ has positive reach.
\item [{\rm (iii)}] 
There exists a  set  $C$ and $\omega>0$ such that  $\reach C>0$ and $A \cap B(a, \omega)= C \cap B(a, \omega)$.
\end{enumerate}
\end{lemma}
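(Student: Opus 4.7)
The plan is to prove the cycle (ii)$\Rightarrow$(iii)$\Rightarrow$(i)$\Rightarrow$(ii). The first implication is immediate with $C:=A\cap\overline B(a,\delta)$ and $\omega:=\delta$. For (iii)$\Rightarrow$(i), since $a\in A\cap B(a,\omega)=C\cap B(a,\omega)$ we have $a\in C$ and $s:=\reach(C,a)\ge\reach C>0$; picking $r<\min(s,\omega/2)$, for any $x\in B(a,r)$ every $p\in P_A(x)\cup P_C(x)$ satisfies $|p-x|\le|a-x|<r$, hence $|p-a|<2r<\omega$, so $p\in A\cap B(a,\omega)=C\cap B(a,\omega)$. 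This forces $\dist(x,A)=\dist(x,C)$ and $P_A(x)=P_C(x)$, a singleton because $x\in B(a,s)\subset\Unp C$; thus $B(a,r)\subset\Unp A$ and $\reach(A,a)\ge r>0$.

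The main work is (i)$\Rightarrow$(ii). Set $r:=\reach(A,a)$, fix $\delta\in(0,r/2)$, and let $A':=A\cap\overline B(a,\delta)$. The inclusion $B(b,r-|b-a|)\subset B(a,r)\subset\Unp A$ for $b\in A\cap B(a,r)$ gives $\reach(A,b)\ge r-\delta$ for every $b\in A'$. The standard one-sided tangent ball argument (exactly the one proving the forward direction of Proposition~\ref{fedtan}, applied pointwise at $b$ with $R:=r-\delta$) then yields
\[\dist(c-b,\Tan(A,b))\le\frac{|c-b|^2}{2(r-\delta)}\quad\text{for all }b\in A',\ c\in A.\]
We verify Federer's criterion for $A'$ with the same constant. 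If $|b-a|<\delta$, a direct approximation gives $\Tan(A',b)=\Tan(A,b)$ and the bound transfers at once. If $|b-a|=\delta$, expanding $|c-a|^2=|c-b|^2+2\langle c-b,b-a\rangle+\delta^2$ with $|c-a|\le\delta$ produces the key geometric inequality $\langle c-b,b-a\rangle\le-|c-b|^2/2$. Picking $w\in\Tan(A,b)$ with $|w-(c-b)|\le|c-b|^2/(2(r-\delta))$, we compute
\[\langle w,b-a\rangle\le-\tfrac12|c-b|^2+\frac{\delta|c-b|^2}{2(r-\delta)}=\frac{|c-b|^2(2\delta-r)}{2(r-\delta)}<0\]
because $\delta<r/2$. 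Any $v\in\Tan(A,b)$ with $\langle v,b-a\rangle<0$ lies in $\Tan(A',b)$: realizing sequences $\lambda_i(a_i-b)\to v$ with $a_i\in A$, $\lambda_i\to\infty$ satisfy $|a_i-a|^2-\delta^2=2\lambda_i^{-1}\langle v,b-a\rangle+o(\lambda_i^{-1})<0$ for large $i$, so $a_i\in A'$ eventually. Hence $w\in\Tan(A',b)$ and the tangent-cone bound holds throughout $A'$; Proposition~\ref{fedtan} then yields $\reach A'\ge r-\delta>0$.

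The main obstacle is the boundary case $|b-a|=\delta$: there $\Tan(A',b)$ is typically strictly smaller than $\Tan(A,b)$, so Federer's inequality for $A$ cannot be quoted verbatim. The resolution combines two complementary ingredients---the quadratic geometric bound $\langle c-b,b-a\rangle\le-|c-b|^2/2$ forced by $b,c\in\overline B(a,\delta)$ with $|b-a|=\delta$, and the observation that tangent vectors to $A$ at $b$ pointing strictly into $B(a,\delta)$ automatically lie in $\Tan(A',b)$---to produce an approximating vector in $\Tan(A',b)$ with the correct quadratic precision.
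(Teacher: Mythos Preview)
Your proof is correct, and it takes a different and more self-contained route than the paper. The paper dispatches (ii)$\Rightarrow$(iii)$\Rightarrow$(i) as ``obvious'' and for the nontrivial implication (i)$\Rightarrow$(ii) simply quotes external results (\cite[Lemma~3.4]{RZ17}, which in turn rests on \cite[Lemma~4.3]{Rat02}, or alternatively \cite[Theorem~4.10(5)]{Fed59}). You instead verify Federer's tangent criterion (Proposition~\ref{fedtan}) for $A'=A\cap\overline B(a,\delta)$ directly, handling the delicate boundary case $|b-a|=\delta$ by combining the geometric inequality $\langle c-b,b-a\rangle\le -|c-b|^2/2$ with the observation that tangent vectors pointing strictly inward belong to $\Tan(A',b)$. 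This buys you a proof that stays entirely within the paper's own toolkit, at the cost of a page of work instead of a citation.

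Two small points worth tightening. First, the ``pointwise forward direction'' of Proposition~\ref{fedtan} you invoke---namely that $\reach(A,b)\ge R$ alone gives $\dist(c-b,\Tan(A,b))\le|c-b|^2/(2R)$ for all $c\in A$---is true but not literally what Proposition~\ref{fedtan} states (which assumes $\reach A\ge r$ globally); it is Federer's \cite[Theorem~4.8(7),(12)]{Fed59} and you should say so. Second, to apply Proposition~\ref{fedtan} to $A'$ you need $A'$ closed; this holds because $\reach(A,a)>0$ forces $A$ to be relatively closed in $B(a,r)$ (any $x\in\overline A\cap B(a,r)$ lies in $\Unp A$ with $\dist(x,A)=0$, hence $x\in A$), but you do not mention it. Similarly, in (iii)$\Rightarrow$(i), the line ``this forces $\dist(x,A)=\dist(x,C)$'' is slightly compressed since $P_A(x)$ is not a~priori nonempty; one should pass through minimizing sequences rather than minimizers, which your argument accommodates with a one-line fix.
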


\begin{proof}
 The
implications (ii)$\implies$(iii)$\implies$(i) are obvious. 

The non-trivial implication (i)$\implies$(ii)
 is an immediate consequence  of \cite[Lemma~3.4]{RZ17} (i) (which follows from a more general 
 \cite [Lemma 4.3]{Rat02}) since (i) implies  (by Lemma~\ref{P_PR}~(i)) that we can find $\delta>0$ such that
 $\reach(A,x)>\delta$ for all $x\in A\cap\overline{B}(a,\delta)$. (This implication also follows easily  from
 Federer's \cite[Theorem 4.10 (5)]{Fed59}, applied to $A$, $\delta$ as above  and $B:= \overline{B}(a,\delta)$.) 
\end{proof}

We will use the following fact which is an almost immediate consequence of two results of \cite{ILI}.

\begin{lemma}\label{krivky}
If $A \subset \R^d$, $d\geq 2$, $0<r<\reach A$,  $a,b\in A$ and  $0<|b-a| < 2r$, then there exists a curve $\vf: [0, l] \to  A$ parametrized by arclength such that  $\vf(0)=a$, $\vf(l)=b$,  $\vf$ is $C^{1,1}$ smooth and $l \leq\frac{\pi}{2}|b-a|.$
\end{lemma}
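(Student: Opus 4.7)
My strategy is to derive the statement as a direct combination of the two results from \cite{ILI} referred to in the preamble of the lemma: one supplying the existence of a shortest connecting curve in $A$ with a sharp length estimate, the other supplying $C^{1,1}$-regularity of such minimizers.

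First I would observe that the hypotheses $0 < r < \reach A$ and $|b-a| < 2r$ are precisely of the form under which the intrinsic (induced length) distance in $A$ between $a$ and $b$ is realised by a minimizing curve lying in $A$. Invoking the first of the two cited results of \cite{ILI}, I obtain a Lipschitz curve $\gamma$ in $A$ joining $a$ and $b$ whose length $l$ satisfies the sharp comparison
$$
l \;\leq\; \frac{\pi}{2}\,|b-a|.
$$
Geometrically this factor $\pi/2$ is forced by the fact that, on the scale of the reach, a minimizing curve in $A$ with chord $|b-a|<2r$ can be no longer than an arc of a circle of radius $r$ spanning that chord, and such an arc has arclength-to-chord ratio tending to $\pi/2$ in the extremal configuration.

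Second, I would reparametrize $\gamma$ by arclength as $\vf:[0,l]\to A$, so that $|\vf'|\equiv 1$ almost everywhere. Applying the second cited result of \cite{ILI}, which asserts that arclength-parametrized minimizers inside a set of positive reach are $C^{1,1}$ (with a Lipschitz constant for $\vf'$ controlled by $1/\reach A$, hence by $1/r$), I obtain the claimed regularity, completing the proof.

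The only real obstacle is bookkeeping: one has to match the precise form of the hypotheses in \cite{ILI} to our notation and check that the slack provided by the strict inequalities $r<\reach A$ and $|b-a|<2r$ is enough to stay strictly inside the admissible range of the two cited theorems, so that both the length bound and the $C^{1,1}$-estimate are valid up to and including the endpoints $0$ and $l$. Everything else is a direct invocation, which is why the authors legitimately describe the result as ``an almost immediate consequence'' of \cite{ILI}.
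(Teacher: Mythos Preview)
Your proposal is correct and follows essentially the same approach as the paper: invoke \cite[Theorem~1.1]{ILI} for existence of the shortest arc together with its length bound, then \cite[Theorem~1.3]{ILI} for the $C^{1,1}$ regularity of its arclength parametrization. The only refinements in the paper's version are that Theorem~1.1 actually yields the length bound in the form $l\leq 2r\arcsin\bigl(|b-a|/(2r)\bigr)$ (from which the $\pi/2$-factor is deduced via $\arcsin z\leq(\pi/2)z$) and, crucially, also asserts that the arc $\Gamma$ is itself $r$-proximally smooth --- it is this last fact about $\Gamma$, rather than a direct statement about minimizers in $A$, that serves as the hypothesis for Theorem~1.3.
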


\begin{proof}
First note that $A$ is $r$-proximally smooth (in the sense of \cite[p.~2]{ILI}) e.g.\ by \cite[Theorem~4.8 (5)]{Fed59}. Thus \cite[Theorem~1.1]{ILI} gives that the shortest arc $\Gamma$ in $A$ between $a$ and $b$ exists, is unique, has  length  $l \leq 2r\arcsin\frac{|b-a|}{2r}$ and 
 $\Gamma$ is  an $r$-proximally smooth set. Consequently we can use \cite[Theorem~1.3]{ILI} and obtain
 that the arclength parametrization $\vf$ of $\Gamma$ with $\vf(0)=a$ and $\vf(l)=b$ is  $C^{1,1}$ smooth.
 Since $\arcsin z \leq \pi/2\cdot z,\ 0\leq z \leq 1$, we obtain  $l \leq\frac{\pi}{2}|b-a|.$   
\end{proof}

The intersection of two sets with positive reach need not have positive reach in general, unless additional assumptions are made, as in the following lemma which is a version of \cite[Proposition~3.9]{Vial}. 
%or \cite[Theorem~4.10]{Fed59}.

\begin{lemma}   \label{prunik}
    Let $A,B\subset\R^d$ be closed sets.
    \begin{enumerate}
        \item[{\rm (i)}] Let $x\in A\cap B$ be a point such that $\reach(A,x)>0$, $\reach(B,x)>0$ and 
    \begin{equation}   \label{tan_prunik}
        \Tan(A,x)\cap\INt\Tan(B,x)\neq\emptyset.
    \end{equation}
    Then $\reach(A\cap B,x)>0$.
        \item[{\rm (ii)}] If $A\cap B$ is compact, $\reach(A,x)>0$, $\reach(B,x)>0$ for any $x\in A\cap B$
     and \eqref{tan_prunik} holds for all $x\in\partial A\cap\partial B$, then $\reach (A\cap B)>0$.
    \end{enumerate}
\end{lemma}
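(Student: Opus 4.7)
The plan is to verify Federer's tangent-cone characterisation (Proposition~\ref{fedtan}) in a neighbourhood of $x$: to produce constants $C,\delta>0$ such that $\dist(b-a,\Tan(A\cap B,a))\leq C|b-a|^2$ for all $a,b\in A\cap B\cap B(x,\delta)$. This yields $\reach(A\cap B,a)\geq 1/(2C)$ on that neighbourhood, and Lemma~\ref{local} then gives (i). For (ii), the interior cases $x\in\INt A$ (where $A\cap B$ agrees with $B$ on a neighbourhood of $x$) and $x\in\INt B$ are immediate via Lemma~\ref{local}; the boundary case $x\in\partial A\cap\partial B$ is covered by (i); and \eqref{reach_compact} together with compactness of $A\cap B$ promotes local positive reach to $\reach(A\cap B)>0$.

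\emph{Setup via lower semicontinuity.} Fix $v\in\Tan(A,x)\cap\INt\Tan(B,x)$ and $\eta>0$ with $\overline{B}(v,\eta)\subset\Tan(B,x)$. Lemma~\ref{local} combined with Lemma~\ref{P_PR}(i) provides $r,\delta_0>0$ such that $\reach(A,y),\reach(B,y)\geq r$ for every $y\in(A\cup B)\cap B(x,\delta_0)$. Since $y\mapsto\Tan(B,y)$ is convex-valued (Lemma~\ref{P_PR}(ii)) and lower semicontinuous on $B$ (Lemma~\ref{P_PR}(vii)), Lemma~\ref{conv_val} gives $\delta\in(0,\delta_0]$ such that $B(v,\eta/2)\subset\Tan(B,a)$ for every $a\in B\cap B(x,\delta)$. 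Shrinking $\delta$ and using lower semicontinuity of $y\mapsto\Tan(A,y)$ at $x$, we further secure, for each $a\in A\cap B(x,\delta)$, a vector $v_a\in\Tan(A,a)$ with $|v_a-v|<\eta/4$; hence $v_a\in\Tan(A,a)\cap\INt\Tan(B,a)$ for $a\in A\cap B\cap B(x,\delta)$, and $|v_a|\leq|v|+\eta/4$.

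\emph{Construction of a common tangent close to $b-a$.} Fix $a,b\in A\cap B\cap B(x,\delta/2)$ with $0<|b-a|<r/2$. Proposition~\ref{fedtan} applied to $A$ and $B$ separately supplies $u_A\in\Tan(A,a)$ and $u_B\in\Tan(B,a)$ with $|u_A-(b-a)|,|u_B-(b-a)|\leq|b-a|^2/(2r)$, so $|u_A-u_B|\leq|b-a|^2/r$. Set $s:=4|u_A-u_B|/\eta$ and $w:=u_A+sv_a$. The convex cone $\Tan(A,a)$ contains $u_A$ and $v_a$, so $w\in\Tan(A,a)$. On the $B$-side, $|(u_A-u_B)/s|=\eta/4$ and $|v_a-v|<\eta/4$ force $v_a+(u_A-u_B)/s\in B(v,\eta/2)\subset\Tan(B,a)$; scaling by $s$ and using the cone property gives $sv_a+(u_A-u_B)\in\Tan(B,a)$, and then $w=u_B+(sv_a+(u_A-u_B))\in\Tan(B,a)$ by convex-cone closure under addition. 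Finally,
\[
  |w-(b-a)|\leq|u_A-(b-a)|+s|v_a|\leq\tfrac{1}{2r}|b-a|^2+\tfrac{4|v_a|}{\eta r}|b-a|^2\leq C|b-a|^2,
\]
with $C$ depending only on $r$, $\eta$, and $|v|$, and independent of $a,b$.

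\emph{From common tangent to $\Tan(A\cap B,a)$; main obstacle.} For any $\epsilon>0$, $w+\epsilon v_a\in\Tan(A,a)\cap\INt\Tan(B,a)$, because the interior of a convex cone absorbs addition of any element of the cone. Tangential regularity (Lemma~\ref{P_PR}(vi)) gives $\Tan_C(A,a)=\Tan(A,a)$ and $\Tan_C(B,a)=\Tan(B,a)$, and interior points of a Clarke tangent cone are \emph{hypertangents}: $B$ admits a uniform conical neighbourhood at $a$ in direction $w+\epsilon v_a$. Combining this with $w+\epsilon v_a\in\Tan_C(A,a)$ via the Clarke condition~\eqref{Clarke} produces sequences $a_n\in A\cap B$ with $(a_n-a)/t_n\to w+\epsilon v_a$ for some $t_n\downarrow 0$, so $w+\epsilon v_a\in\Tan(A\cap B,a)$; closedness of this tangent cone and $\epsilon\downarrow 0$ yield $w\in\Tan(A\cap B,a)$. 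Consequently $\dist(b-a,\Tan(A\cap B,a))\leq C|b-a|^2$, completing the verification. I expect this final upgrade to be the main obstacle: it rests on the standard but delicate equivalence between interior points of the Clarke tangent cone and hypertangent directions for tangentially regular sets, and it is precisely here that the transversality hypothesis~\eqref{tan_prunik} (rather than a mere non-empty intersection of tangent cones) enters decisively.
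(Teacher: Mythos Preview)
Your argument is correct and takes a genuinely different route from the paper's. The paper invokes Federer's intersection theorem \cite[Theorem~4.10]{Fed59} as a black box: after localising so that $\reach A,\reach B>0$ and \eqref{tan_prunik} holds throughout a neighbourhood (via Lemmas~\ref{P_PR} and \ref{conv_val}), it observes that \eqref{tan_prunik} forbids nonzero $u\in\Nor(A,y)$, $v\in\Nor(B,y)$ with $u+v=0$, which is precisely Federer's hypothesis for positive reach of the intersection. Your approach instead builds a vector $w\in\Tan(A\cap B,a)$ close to $b-a$ by hand---combining approximate tangents from $A$ and $B$, perturbing into $\INt\Tan(B,a)$, and invoking the hypertangent characterisation of the interior of the Clarke cone---and then appeals to Proposition~\ref{fedtan}. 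This is more self-contained (no reliance on the rather deep \cite[Theorem~4.10]{Fed59}) and yields an explicit reach bound; the price is the hypertangent fact (e.g.\ \cite[Theorem~6.36]{RW04}), which the paper never needs to state. Part~(ii) is handled identically in both arguments.

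One technical step you pass over too quickly: the sentence ``this yields $\reach(A\cap B,a)\geq 1/(2C)$ on that neighbourhood'' does not follow directly from Proposition~\ref{fedtan}, which is a global statement about a closed set. You should instead pass to $X':=A\cap B\cap\overline{B}(x,\delta')$ with $\delta'<\min(\delta/2,1/(2C))$ and verify Federer's condition for $X'$. For $a$ in the open ball, $\Tan(X',a)=\Tan(A\cap B,a)$ and your $w$ works. For $|a-x|=\delta'$ and $b\in X'$, one has $\langle b-a,x-a\rangle\geq|b-a|^2/2$, whence $\langle w,x-a\rangle\geq|b-a|^2(1/2-C\delta')>0$; thus any sequence in $A\cap B$ realising $w$ as a tangent direction eventually lies in $\overline{B}(x,\delta')$, so $w\in\Tan(X',a)$. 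Proposition~\ref{fedtan} then gives $\reach X'>0$, and Lemma~\ref{local} finishes. This is routine but should be written out.
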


\begin{proof}
    First we prove (i). Using Lemma~\ref{local} we see that we can assume that $\reach A>0$ and $\reach B>0$. Applying Lemma~\ref{P_PR}~(i), (ii), (vii) and Lemma~\ref{conv_val}, we find $r,\delta>0$ such that for all $y\in C:=A\cap B\cap\overline{B}(x,\delta)$, $\reach(A,y)>r$, $\reach(B,y)>r$ and \eqref{tan_prunik} holds with $y$ in place of $x$. This implies that for any $y\in C$ there do not exist nonzero normal vectors $u\in\Nor(A,y)$, $v\in\Nor(B,y)$ with $u+v=0$ (otherwise, $\Tan(A,y)\cap\Tan(B,y)$ would be contained in the hyperplane $u^\perp$, which would contradict \eqref{tan_prunik}). Thus we can apply \cite[Theorem~4.10]{Fed59}~(4) with $\rho>0$ sufficiently small (using that $\eta>0$ by \cite[Theorem~4.10]{Fed59}~(1)) and obtain (i). 
    
    For (ii), it is enough to observe that \eqref{tan_prunik} is fulfilled trivially if $x\in\INt A\cup\INt B$, and apply \eqref{reach_compact}.
\end{proof}

If we intersect a set with positive reach with a sufficiently small closed ball, the reach even does not decrease.

\begin{lemma}  \label{ball}
    Assume that $A\subset\R^d$, $\reach A\geq r>0$ and $B$ is a closed ball with radius $\rho\leq r$. Then either $A\cap B$ is empty, or $\reach(A\cap B)\geq r$.
\end{lemma}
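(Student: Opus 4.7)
The plan is to verify Federer's tangential characterization (Proposition~\ref{fedtan}): it suffices to show that for every $a,b\in A\cap B$ with $a\neq b$,
\[
\dist\bigl(b-a,\,\Tan(A\cap B,a)\bigr)\leq \frac{|b-a|^2}{2r}.
\]
Write $B=\overline B(c,\rho)$ and let $v^\ast$ be the metric projection of $b-a$ onto the closed convex cone $T_A:=\Tan(A,a)$. Applying Proposition~\ref{fedtan} to $A$ itself immediately gives $|v^\ast-(b-a)|\leq |b-a|^2/(2r)$. The strategy is then to show that this $v^\ast$ already belongs to $\Tan(A\cap B,a)$.

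The key observation is that $v^\ast$ automatically lies in $\Tan(B,a)$. If $a\in\INt B$ this is trivial, and moreover $\Tan(A\cap B,a)=T_A$ in a neighborhood of $a$, so $v^\ast\in\Tan(A\cap B,a)$ directly. Otherwise $a\in\partial B$; let $u=(a-c)/\rho$ be the outward unit normal, so that $\Tan(B,a)=\{w:\langle w,u\rangle\leq 0\}$. Expanding $|b-c|^2\leq \rho^2$ yields
\[
\langle b-a,u\rangle\leq -\frac{|b-a|^2}{2\rho}\leq -\frac{|b-a|^2}{2r},
\]
and combining with the Cauchy--Schwarz bound $|\langle v^\ast-(b-a),u\rangle|\leq |b-a|^2/(2r)$ gives $\langle v^\ast,u\rangle\leq 0$, that is, $v^\ast\in T_A\cap \Tan(B,a)$. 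The inequality is strict whenever $\rho<r$ or $b\in\INt B$.

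To promote $v^\ast$ from $T_A\cap \Tan(B,a)$ into $\Tan(A\cap B,a)$, I would invoke the Clarke tangential regularity of $A$ (Lemma~\ref{P_PR}(vi)) and the description~\eqref{Clarke}. When $\langle v^\ast,u\rangle<0$, for any $\tau_i\searrow 0$ one picks $x_i\in A$ with $(x_i-a)/\tau_i\to v^\ast$; expanding
\[
|x_i-c|^2=\rho^2+2\tau_i\rho\langle v^\ast,u\rangle+o(\tau_i),
\]
the strict negativity of $\langle v^\ast,u\rangle$ forces $x_i\in B$ for large $i$, whence $x_i\in A\cap B$ and $v^\ast\in\Tan(A\cap B,a)$. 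The main obstacle is the boundary case $\rho=r$ together with $b\in\partial B$, in which $\langle v^\ast,u\rangle$ may vanish; here I would approximate $v^\ast$ by vectors $v_n^\ast\in T_A$ with $\langle v_n^\ast,u\rangle<0$ whenever $T_A\cap\{\langle\cdot,u\rangle<0\}\neq\emptyset$ and pass to the limit using closedness of $\Tan(A\cap B,a)$; in the remaining degenerate configuration (where no such approximating vectors exist, i.e.\ $T_A$ lies in the closed halfspace $\{\langle\cdot,u\rangle\geq 0\}$) one checks that $A\cap B$ is locally contained in $\partial B$ near $a$ and applies the analogous argument on the sphere, which itself has reach $r$.
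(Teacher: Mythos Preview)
Your route is genuinely different from the paper's: the paper simply invokes \cite[Corollary~16.17]{ThII} on intersections with strongly convex sets, while you verify Proposition~\ref{fedtan} by hand. The core of your argument is correct and rather elegant. For $a\in\interior B$ there is nothing to do; for $a\in\partial B$ your computation $\langle b-a,u\rangle\le -|b-a|^2/(2\rho)$ together with $|v^\ast-(b-a)|\le|b-a|^2/(2r)$ indeed gives $\langle v^\ast,u\rangle\le 0$, and when the inequality is strict the sequence argument shows $v^\ast\in\Tan(A\cap B,a)$. The approximation step when $\langle v^\ast,u\rangle=0$ but $T_A$ meets the open half-space is also fine, using convexity of $T_A$ and closedness of $\Tan(A\cap B,a)$.

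The gap is the residual degenerate case $\rho=r$, $a\in\partial B$, $T_A\subset\{\langle\cdot,u\rangle\ge 0\}$. You correctly note that then $-u\in\Nor(A,a)$, hence $\dist(c,A)=r$ and $A\cap B\subset\partial B$; but the phrase ``applies the analogous argument on the sphere'' does not do the remaining work. One still has to produce a sequence in $A\cap\partial B$ realising the direction $v^\ast$, and the obvious candidates (points $x_i\in A$ with $(x_i-a)/|x_i-a|\to v^\ast/|v^\ast|$) need not lie on $\partial B$: since all of $A$ lies outside $\INt B$, the expansion $|x_i-c|^2=r^2+o(|x_i-a|)$ only gives $|x_i-c|\ge r$, with no control on the sign of the error. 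Replacing the ball by the sphere does not simplify the problem.

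A cleaner elementary route that avoids this case entirely is to use Lemma~\ref{str} instead of Proposition~\ref{fedtan}. Given $p,q\in A\cap B$ with $|p-q|<2r$, take the point $s\in A$ with $|s-\tfrac{p+q}{2}|\le r-\sqrt{r^2-|p-q|^2/4}$. The parallelogram identity gives $|\tfrac{p+q}{2}-c|\le\sqrt{\rho^2-|p-q|^2/4}$, so
\[
|s-c|\le \bigl(r-\sqrt{r^2-|p-q|^2/4}\bigr)+\sqrt{\rho^2-|p-q|^2/4}\le \rho,
\]
since $t\mapsto t-\sqrt{t^2-|p-q|^2/4}$ is decreasing on $[\,|p-q|/2,\infty)$ and $\rho\le r$. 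Thus $s\in A\cap B$ and Lemma~\ref{str} yields $\reach(A\cap B)\ge r$ directly.
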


\begin{proof}
    The result follows from \cite[Corollary~16.17]{ThII}, Remark~\ref{PR-var} and the fact that $B$ is $\rho$-strongly convex (see \cite[Definition~16.4]{ThII}), hence also $r$-strongly convex (cf.\ \cite[Corollary~16.12]{ThII}).
\end{proof}

We will use the following well-known fact (see e.g. \cite[Proposition 16.20]{ThII} together with Remark~\ref{PR-var}).

\begin{lemma}\label{str}
Let  $A \subset \R^d$ be a nonempty closed set and  $0<r< \infty$. Then the following assertions are equivalent:
\begin{enumerate}
\item[{\rm (i)}]
 $\reach A \geq r$.
\item[{\rm (ii)}]
For every  $x,y \in A$ with  $|x-y|< 2r$ there exists  $s \in A$ such that
$$   \left| \frac{x+y}{2} -s\right| \leq  r - \sqrt{r^2 - \frac{|x-y|^2}{4}} =: d(x,y,r).$$
\end{enumerate}
\end{lemma}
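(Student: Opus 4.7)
The plan is to prove the two directions separately: $(\mathrm{i})\Rightarrow(\mathrm{ii})$ via the ``rolling ball'' consequence of positive reach, and $(\mathrm{ii})\Rightarrow(\mathrm{i})$ by verifying Federer's tangential characterization from Proposition~\ref{fedtan}.

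For $(\mathrm{i})\Rightarrow(\mathrm{ii})$, I would fix $p,q\in A$ with $|p-q|<2r$ and set $m:=(p+q)/2$, $\rho:=|p-q|/2$. Since $|m-p|=\rho<r$ we have $\dist(m,A)<r$, so $m\in B(A,r)\subseteq\Unp A$ by $\reach A\geq r$, and the nearest point $s:=P_A(m)$ is uniquely defined. Set $h:=|m-s|$; if $h=0$ we are done, otherwise $u:=(m-s)/h\in\Nor(A,s)$. The key step is the rolling-ball consequence of Proposition~\ref{fedtan}: for any $a\in A$ one has $\dist(a-s,\Tan(A,s))\leq|a-s|^2/(2r)$, and combining this with $\langle v,u\rangle\leq 0$ for all $v\in\Tan(A,s)$ yields $\langle a-s,u\rangle\leq |a-s|^2/(2r)$, which is equivalent to $a\notin B(s+ru,r)$. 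Thus $A\subseteq\{x:\,x_1\leq|x|^2/(2r)\}$ in coordinates where $s=0$ and $u=e_1$. Writing $p=m+v$, $q=m-v$ with $|v|=\rho$ and evaluating this inclusion at both $p$ and $q$, the sum of the two resulting inequalities simplifies to $2hr\leq h^2+\rho^2$, i.e.\ $(h-r)^2\geq r^2-\rho^2$. Together with $h\leq\rho<r$ this yields $h\leq r-\sqrt{r^2-\rho^2}=d(p,q,r)$, as needed.

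For $(\mathrm{ii})\Rightarrow(\mathrm{i})$, by Proposition~\ref{fedtan} it suffices to prove $\dist(b-a,\Tan(A,a))\leq|b-a|^2/(2r)$ for all $a,b\in A$; this is trivial when $|b-a|\geq 2r$, so assume $|b-a|<2r$. I would iterate (ii): set $s_0:=b$ and, given $s_n\in A$, choose $s_{n+1}\in A$ with $|s_{n+1}-(a+s_n)/2|\leq d(a,s_n,r)$. Writing $\ell_n:=|s_n-a|$, the elementary estimate $d(a,s_n,r)\leq\ell_n^2/(4r)$ and the triangle inequality give $\ell_{n+1}\leq\ell_n/2+\ell_n^2/(4r)$, so $\ell_n\to 0$ at essentially geometric rate $2^{-n}$. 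For $\eps_n:=s_n-a-2^{-n}(b-a)$, the recursion $\eps_{n+1}=\eps_n/2+\delta_n$ (with $|\delta_n|\leq d(a,s_n,r)$) combined with the sharp expansion $r-\sqrt{r^2-\rho^2}=\rho^2/(2r)+O(\rho^4/r^3)$ yields, after careful summation, $\limsup_n 2^n|\eps_n|\leq|b-a|^2/(2r)$. Extracting a convergent subsequence from the bounded sequence $2^n(s_n-a)$ (with $s_n\to a$ and $s_n\neq a$) then produces a vector $w\in\Tan(A,a)$ satisfying $|w-(b-a)|\leq|b-a|^2/(2r)$, which is exactly Proposition~\ref{fedtan}~(ii) for $\reach A\geq r$.

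I expect the main obstacle to be matching the sharp constant $1/(2r)$ in the second direction: the crude iterative analysis gives an upper bound only of the form $C/r$ with $C>4$, and obtaining the exact value requires tracking the higher-order error terms through the recursion with care. An alternative would be to argue that the sequence $(s_n)$ extends by dyadic midpoint refinement to a rectifiable arc in $A$ whose tangent at $a$ (in the spirit of Lemma~\ref{krivky}) directly provides the desired tangent vector with the sharp sagitta constant built in.
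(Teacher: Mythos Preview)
The paper does not give a proof of this lemma at all: it quotes it as a well-known fact from the literature (Thibault's \emph{Unilateral Variational Analysis}, Proposition~16.20, together with the identification of positive reach with uniform prox-regularity in Remark~\ref{PR-var}). So there is no ``paper's proof'' to compare against, and your attempt is already more than what the paper provides.

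Your argument for $(\mathrm{i})\Rightarrow(\mathrm{ii})$ is correct and clean: the rolling-ball exclusion $A\cap B(s+ru,r)=\emptyset$ follows from Proposition~\ref{fedtan} exactly as you indicate, and summing the two inequalities $|p-(s+ru)|^2\geq r^2$, $|q-(s+ru)|^2\geq r^2$ cancels the cross term and gives $(h-r)^2\geq r^2-\rho^2$, hence $h\leq d(p,q,r)$.

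For $(\mathrm{ii})\Rightarrow(\mathrm{i})$ your instinct that the sharp constant is the obstacle is right, and in fact your iteration via Proposition~\ref{fedtan} is the hard way around. The bound $|2^n\eps_n|\leq\sum_{k<n}2^{k+1}d(a,s_k,r)$ together with $d(a,s_k,r)\sim\ell_k^2/(8r)$ only gives the target $|b-a|^2/(2r)$ if one already knows $\ell_k\sim 2^{-k}|b-a|$; but the recursion only yields $\ell_k\leq C\,2^{-k}|b-a|$ with $C>1$ in general, and the error compounds. There is a much more direct argument that bypasses Federer's tangential criterion entirely and delivers the sharp constant for free: suppose some $x$ with $\dist(x,A)=\rho<r$ had two nearest points $p\neq q$ in $A$. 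Then $|p-q|\leq 2\rho<2r$, so (ii) yields $s\in A$ with $|s-\tfrac{p+q}{2}|\leq r-\sqrt{r^2-|p-q|^2/4}$. Since $|x-\tfrac{p+q}{2}|=\sqrt{\rho^2-|p-q|^2/4}$ and the function $t\mapsto t-\sqrt{t^2-c}$ is strictly decreasing on $(\sqrt{c},\infty)$, one gets $|s-x|\leq (r-\sqrt{r^2-|p-q|^2/4})+\sqrt{\rho^2-|p-q|^2/4}<\rho$, contradicting $\rho=\dist(x,A)$. This shows every $x$ with $\dist(x,A)<r$ lies in $\Unp A$, i.e.\ $\reach A\geq r$. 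This is essentially the argument behind the cited result in \cite{ThII} and \cite{Vial}.
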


\begin{remark}\label{ostr}
An elementary computation gives that for $|x-y|< 2r$,
$$  \frac{|x-y|^2}{8r}    \leq    d(x,y,r) \leq  \frac{|x-y|^2}{4r}.$$
\end{remark}

The following easy consequence of a well-known Federer's result is well-known; because of the lack of a reference, we supply the proof.

\begin{lemma}  \label{reach_C11}
Let $U \subset \R^d$ be open and $\Phi: U \to \R^d$ be a $C^{1,1}$-diffeomorphism.
 Assume that $\emptyset \neq C\subset U$ is compact and $\reach C >0$. Then $\reach \Phi(C)>0$. 
\end{lemma}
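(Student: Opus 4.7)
The plan is to combine a pointwise invariance property of positive reach under $C^{1,1}$-diffeomorphisms, due to Federer, with the compactness reduction \eqref{reach_compact}. The Federer-type statement I would invoke is the \emph{local} version of the lemma: if $A\subset\R^d$, $a\in A$, $\reach(A,a)>0$, and $\Phi$ is a $C^{1,1}$-diffeomorphism defined on an open neighborhood of $a$, then $\reach(\Phi(A),\Phi(a))>0$. This is precisely the well-known Federer result alluded to in the statement (\cite[Theorem~4.19]{Fed59}), and it carries the entire analytic content of the argument; everything else is bookkeeping.

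Granting this local statement, I would first upgrade it to positive reach of $\Phi(C)$ at the single point $\Phi(a)$. Fix $a\in C$. Since $\reach C>0$ we have $\reach(C,a)>0$; by Lemma~\ref{local} there is $\delta>0$ with $C_\delta:=C\cap\overline B(a,\delta)\subset U$ of positive reach. Applying Federer's pointwise invariance to $C_\delta$ at $a$ gives $\reach(\Phi(C_\delta),\Phi(a))>0$. Because $\Phi$ is a homeomorphism, $\Phi(C_\delta)$ coincides with $\Phi(C)$ on the open neighborhood $\Phi(B(a,\delta))$ of $\Phi(a)$, so the implication (iii)$\Rightarrow$(i) in Lemma~\ref{local} yields $\reach(\Phi(C),\Phi(a))>0$.

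Finally, $\Phi(C)$ is compact as the continuous image of the compact set $C$, and by the previous paragraph $\reach(\Phi(C),y)>0$ at every $y\in\Phi(C)$. The equivalence \eqref{reach_compact} therefore delivers $\reach\Phi(C)>0$, as required. The only step with genuine content is the invocation of Federer's pointwise invariance theorem; I do not expect any real obstacle beyond pinning down a clean reference for that ingredient.
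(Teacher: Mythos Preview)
Your argument follows the paper's proof almost verbatim: localize via Lemma~\ref{local} (or Lemma~\ref{ball}) to a compact piece $C\cap\overline B(a,\delta)$ with positive reach, push it forward by Federer's \cite[Theorem~4.19]{Fed59}, transfer positive reach back to $\Phi(C)$ at $\Phi(a)$ via Lemma~\ref{local}, and conclude by compactness through \eqref{reach_compact}. The only imprecision is your description of \cite[Theorem~4.19]{Fed59} as a ``pointwise'' statement $\reach(A,a)>0\Rightarrow\reach(\Phi(A),\Phi(a))>0$: Federer's theorem is in fact a \emph{global} result requiring that $\Phi$, $D\Phi$ and $\Phi^{-1}$ be Lipschitz on a full neighborhood of the set, and the paper spends a few lines choosing nested balls so that these Lipschitz bounds hold by compactness---you should do the same rather than black-box a pointwise version that is not quite what Federer states.
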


\begin{proof}
Choose an arbitrary $y_0\in \Phi(C)$, $y_0= \Phi(x_0)$. Choose $\sigma>0$ such that $B(y_0,\sigma) \subset \Phi(U)$
 and $D\Phi^{-1}$ is bounded on  $B(y_0,\sigma)$. Then $\Phi^{-1}$ is Lipschitz on $B(y_0,\sigma)$.
 Now choose  $0< s < \reach C$ such that  $ B(x_0, 2 s) \subset U$ and
  $\Phi( B(x_0, 2 s)) \subset B(y_0,\sigma)$. Then $D \Phi$ is Lipschitz and thus bounded
	 on $B(x_0, 2 s)$; consequently $\Phi$ is Lipschitz on $B(x_0, 2 s)$. By Lemma \ref{ball} 
	$A:= C \cap \overline B(x_0, s)$ has positive reach. Now we easily see that \cite[Theorem~4.19]{Fed59}
	 applied to $A$ and $f:= \Phi|_G$, where $G:= \{x \in \R^d:\ \dist(x,A)< s\}$ yields
	 that $\reach \Phi(A)>0$. Since $\Phi(C) \cap \Phi(B(x_0,s)) = \Phi(A) \cap \Phi(B(x_0, s))$,
	 we obtain  $\reach(\Phi(C),y_0)>0$ (cf. Lemma \ref{local}). Since $C$ is compact and $y_0\in \Phi(C)$
	 was arbitrary, we obtain $\reach \Phi(C)>0$ by \eqref{reach_compact}.
	\end{proof}

\begin{lemma} \label{B_set_PR}
    Any $\B$-set $B\subset\R^2$ has positive reach.
\end{lemma}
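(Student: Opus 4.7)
The plan is to verify the two-point Vial-type characterization of positive reach given in Lemma~\ref{str}. Let $B$ be a $\B$-set with defining data $r>0$ and continuous functions $\psi\le 0\le\vf$ on the closed interval $I$, where $I=[0,r]$ in case $\B_-$ and $I=[-r,r]$ in case $\B_+$. Since $\vf$ is semiconcave and $\psi$ is semiconvex on the interior $\INt I$, one may choose a common modulus $c\ge 0$ such that $\vf-\tfrac{c}{2}x^2$ is concave and $\psi+\tfrac{c}{2}x^2$ is convex on $\INt I$. Because $\vf$ and $\psi$ are continuous on $I$, these concavity/convexity properties extend to the whole of $I$ by the standard continuity argument. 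Set $r_0:=1/c$ (if $c=0$, then $B$ is convex with infinite reach and there is nothing to prove); by Remark~\ref{ostr} it suffices, for every $p,q\in B$ with $|p-q|<2r_0$, to exhibit $s\in B$ with $|\tfrac{p+q}{2}-s|\le \tfrac{|p-q|^2}{8r_0}$.

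Given such $p=(x_1,y_1)$ and $q=(x_2,y_2)$, write $\bar x:=\tfrac{x_1+x_2}{2}\in I$ and $\bar y:=\tfrac{y_1+y_2}{2}$. Define $y^*$ to be the nearest point to $\bar y$ in the (nonempty) interval $[\psi(\bar x),\vf(\bar x)]$, and put $s:=(\bar x,y^*)$, so that $s\in B$ and $|\tfrac{p+q}{2}-s|=|\bar y-y^*|$. Midpoint concavity of $\vf-\tfrac{c}{2}x^2$ applied to $x_1,x_2$ yields
\[
\vf(\bar x)\ge\frac{\vf(x_1)+\vf(x_2)}{2}-\frac{c}{8}(x_2-x_1)^2,
\]
which combined with $y_i\le\vf(x_i)$ gives $\bar y-\vf(\bar x)\le\tfrac{c}{8}(x_2-x_1)^2\le\tfrac{c}{8}|p-q|^2$. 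Midpoint convexity of $\psi+\tfrac{c}{2}x^2$ yields the symmetric bound $\psi(\bar x)-\bar y\le\tfrac{c}{8}|p-q|^2$. In either of the three cases (depending on whether $\bar y$ lies above, below, or inside $[\psi(\bar x),\vf(\bar x)]$) we therefore get $|\bar y-y^*|\le\tfrac{c}{8}|p-q|^2=\tfrac{|p-q|^2}{8r_0}$, as required.

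The only delicate point is whether the midpoint inequality may be invoked when one (or both) of $x_1,x_2$ lies on $\partial I$, since the semiconcavity/semiconvexity hypothesis is given only on $\INt I$; this is precisely what the continuity extension of concavity/convexity to the closure handles. (For a $\B_-$-set, the extreme case $x_1=x_2=0$ forces $p=q=(0,0)$, so nothing is required.) Once this is disposed of, Lemma~\ref{str} gives $\reach B\ge r_0>0$, completing the proof.
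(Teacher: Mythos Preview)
Your proof is correct and takes a genuinely different, more elementary route than the paper's. The paper proceeds indirectly: it extends $\vf,\psi$ to semiconcave/semiconvex functions on all of $\R$, invokes Fu's result that $\hyp\tilde\vf$ and $\epi\tilde\psi$ have positive reach, then uses Lemma~\ref{prunik} to show that the intersections $M_+=V\cap\epi\tilde\psi$ and $M_-=V\cap\hyp\tilde\vf$ (with $V$ a box) have positive reach, and finally appeals to Federer's result \cite[Theorem~5.16~(5)]{Fed59} on $M_+\cup M_-=V$ and $M_+\cap M_-=B$ to conclude.

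Your argument is shorter and more self-contained: you verify the Vial-type midpoint criterion of Lemma~\ref{str} directly, using only the midpoint inequality for the concave/convex functions $\vf-\tfrac{c}{2}x^2$ and $\psi+\tfrac{c}{2}x^2$ (extended to the closed interval by continuity, which is indeed routine). As a bonus, your proof yields the explicit lower bound $\reach B\ge 1/c$, which the paper's argument does not. The paper's route, on the other hand, has the virtue of illustrating how the result fits into the general machinery (hypographs, intersections, unions) used elsewhere in the article.
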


\begin{proof}
    By Definition~\ref{B-sets}, $B$ has the form
    $$B:=\{(x,y):\, x\in[p,q],\, \psi(x)\leq y\leq\varphi(x)\},$$
    where $[p,q]=[0,r]$ or $[p,q]=[-r,r]$ for some $r>0$ and $\psi\leq 0\leq\varphi$ are Lipschitz functions on $[p,q]$, $\psi$ is semiconvex on $(p,q)$ and $\varphi$ is semiconcave on $(p,q)$. Then $\psi$ (resp.\ $\varphi$) has a Lipschitz semiconvex (resp.\ semiconcave) extension $\tilde{\psi}$ (resp.\ $\tilde{\varphi}$) defined on $\R$ (see \cite[Proposition~1.7]{Fu85} and Remark~\ref{RemFu}) and we have 
    $$\reach(\epi\tilde{\psi})>0 \text{ and }\reach(\hyp\tilde{\varphi})>0$$
    by \cite[Theorem~2.3]{Fu85}. Take $K>\max\{\max|\varphi|,\max|\psi|\}$ and denote $V:=[p,q]\times [-K,K]$,    $M_+=V\cap\epi\tilde{\psi}$ and $M_-=V\cap\hyp\tilde{\varphi}$; note that $\reach V>0$.
    Now we apply Lemma~\ref{prunik} to show that both $M_+$ and $M_-$ have positive reach. Consider the case $M_+$ (the other being analogous) and let $p\in \partial V\cap \partial(\epi\tilde{\psi})$. Then clearly $e_2\in\Tan(V,p)\cap\INt\Tan(\epi\tilde{\psi},p)$, thus we have $\reach M_+>0$ by Lemma~\ref{prunik}.
    Since $M_+\cup M_-=V$, $M_+\cap\M_-=B$ and $\reach V>0$, we can apply \cite[Theorem~5.16~(5)]{Fed59} and get $\reach B>0$.
\end{proof}

We will also use the following result which essentially goes back to Reshetnyak \cite{Re56} and is an easy consequence of \cite[Theorem~2.6]{Fu85}.

\begin{lemma}  \label{Fu_local}
    Let $u\in S^{d-1}$ ($d\geq 2$), $f:u^\perp\to\R$ be Lipschitz and $a\in\partial(\hyp_uf)$.
    Then $\reach(\hyp_uf,a)>0$ if and only if $f$ is semiconcave on $U:=u^\perp\cap B(\pi_{u^\perp}(a),\delta)$ for some $\delta>0$.
\end{lemma}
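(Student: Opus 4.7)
The plan is to reduce the $(\Leftarrow)$ direction to the global theorem \cite[Theorem~2.3]{Fu85} (already used in the proof of Lemma~\ref{B_set_PR}) via an extension argument, and to prove $(\Rightarrow)$ directly by converting the midpoint characterization of positive reach (Lemma~\ref{str}) into a midpoint semiconcavity inequality for $f$. After rotating we may assume $u=e_d$; write points of $\R^d$ as $(w,t)\in u^\perp\times\R$ and set $x_0:=\pi_{u^\perp}(a)$. Lipschitzness of $f$ forces $\partial\hyp_uf=\graph f$, so $a=(x_0,f(x_0))$.

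For the $(\Leftarrow)$ direction, assume $f$ is $L$-Lipschitz on $u^\perp$ and semiconcave with constant $c$ on $U=u^\perp\cap B(x_0,\delta)$. On a slightly smaller closed ball $\overline U''\subset U$, the function $g:=f-\tfrac{c}{2}|\cdot-x_0|^2$ is concave and Lipschitz. The standard concave envelope construction (infimum of all affine upper supports to $g$ at points of $\INt U''$) produces a globally Lipschitz, globally concave extension $\tilde g:u^\perp\to\R$ agreeing with $g$ on $\overline U''$. Adding back a smooth bounded semiconcave function $\bar q$ on $u^\perp$ that equals $\tfrac{c}{2}|\cdot-x_0|^2$ on $\overline U''$ (a smooth cutoff of the quadratic), we obtain a globally Lipschitz, globally semiconcave $\tilde f:=\tilde g+\bar q$ with $\tilde f=f$ on $\overline U''$; this is the $d$-dimensional counterpart of \cite[Proposition~1.7]{Fu85}. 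Now $\reach(\hyp_u\tilde f)>0$ by \cite[Theorem~2.3]{Fu85}. Choosing an open ball $W\subset\R^d$ around $a$ small enough that $\pi_{u^\perp}(W)\subset\INt U''$, we have $\hyp_uf\cap W=\hyp_u\tilde f\cap W$, and Lemma~\ref{local} yields $\reach(\hyp_uf,a)>0$.

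For the $(\Rightarrow)$ direction, assume $\reach(\hyp_uf,a)>0$. By Lemma~\ref{local}~(ii), choose $\delta_0,r>0$ with $\reach(\hyp_uf\cap\overline B(a,\delta_0))\geq r$, and take $\delta>0$ small enough that for every $x,y\in B(x_0,\delta)\cap u^\perp$ the points $p:=(x,f(x))$, $q:=(y,f(y))$ lie in $\hyp_uf\cap\overline B(a,\delta_0)$ and $|p-q|<2r$ (possible by continuity of $f$ at $x_0$). Applying Lemma~\ref{str} and Remark~\ref{ostr}, we obtain $s=(z,t)\in\hyp_uf\cap\overline B(a,\delta_0)$ with $\bigl|\tfrac{p+q}{2}-s\bigr|\leq\tfrac{|p-q|^2}{4r}$. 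Since $s\in\hyp_uf$ gives $t\leq f(z)$, the two coordinate-wise bounds combined with the $L$-Lipschitz transfer from $f(z)$ to $f(\tfrac{x+y}{2})$ yield
\[
    f\bigl(\tfrac{x+y}{2}\bigr)\geq\tfrac{f(x)+f(y)}{2}-\tfrac{(1+L)(1+L^2)}{4r}\,|x-y|^2,
\]
using $|p-q|^2\leq(1+L^2)|x-y|^2$. This midpoint inequality on the open convex set $u^\perp\cap B(x_0,\delta)$, together with continuity of $f$, is equivalent to semiconcavity of $f$ with constant $\tfrac{2(1+L)(1+L^2)}{r}$ (a continuous midpoint-semiconcave function on a convex open set is semiconcave).

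The main obstacle I expect is the extension step in $(\Leftarrow)$: the standard tool in the $1$-dimensional case is \cite[Proposition~1.7]{Fu85}, but in higher dimension the quadratic correction $|\cdot-x_0|^2$ needed to pass between concave and semiconcave is not globally Lipschitz, so the extension must first strip off this quadratic, extend the (now concave and Lipschitz) function globally, and then reattach a bounded smooth version of the quadratic. The other step requiring some vigilance is that Lemma~\ref{str} produces a point of $\hyp_uf$ (not \emph{a priori} on the graph); but membership in the hypograph is exactly what delivers $t\leq f(z)$, which combined with the bound on $\bigl|\tfrac{f(x)+f(y)}{2}-t\bigr|$ gives the lower bound on $f(z)$ needed for the midpoint inequality.
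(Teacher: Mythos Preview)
Your proposal is correct. The $(\Leftarrow)$ direction is essentially the paper's: both extend $f$ to a global Lipschitz semiconcave function on $u^\perp$ (you reconstruct the extension by hand via a concave envelope plus a cut-off quadratic; the paper simply cites \cite[Proposition~1.7]{Fu85}), apply Fu's global theorem that the hypograph of such a function has positive reach, and then localize via Lemma~\ref{local}.

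The $(\Rightarrow)$ direction is a genuinely different route. The paper uses the positive-reach hypothesis to produce, at every boundary point near $a$, a unit normal and an exterior tangent ball of uniform radius (citing \cite[Proposition~3.1~(vi)]{RZ17} and \cite[Lemma~4.3]{RZ19}), and then invokes \cite[Theorem~2.6]{Fu85}, which converts a uniform exterior-ball condition along the graph into semiconcavity of $f$. You instead feed the midpoint characterization of positive reach (Lemma~\ref{str} with Remark~\ref{ostr}) directly into a midpoint inequality for $f$, using only that the auxiliary point $s$ lies in $\hyp_u f$ and that $f$ is globally Lipschitz; the midpoint inequality plus continuity then gives semiconcavity with the explicit constant $2(1+L)(1+L^2)/r$. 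Your argument is more self-contained (it stays entirely within the toolkit already assembled in the paper and avoids the external references to \cite{RZ17}, \cite{RZ19} and \cite[Theorem~2.6]{Fu85}) and produces a quantitative constant; the paper's argument is shorter on the page because it outsources the analytic work to \cite{Fu85}.
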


\begin{proof}
    We can and will assume that $a=0$ and $u=e_d$, and we denote $W:=e_d^\perp$. Let $K>0$ be such that $f:W\to\R$ is $K$-Lipschitz. Denote $A:=\hyp f$. We will use the usual identifications of $W$ with $\R^{d-1}$ and of $\R^d$ with $W\times\R$.
    
    Assume first that $\reach(A,a)>0$. Using Lemma~\ref{P_PR}~(i) we can find $r>0$ such that $\reach(A,x)>r$ whenever $|x|<r$. Set $\delta:=r/\sqrt{1+K^2}$ and $U:=W\cap B(0,\delta)$. Consider any $w\in U$. Then, denoting $x:=(w,f(w))\in\partial A$, we have $|x|<r$ since $f$ is $K$-Lipschitz. By \cite[Proposition~3.1~(vi)]{RZ17} there exists $n\in\Nor(A,x)\cap S^{d-1}$ and we have $\overline{B}(x+rn,r)\cap A=\{x\}$ by \cite[Lemma~4.3]{RZ19}. Thus the assumptions of \cite[Theorem~2.6]{Fu85} are satisfied and we obtain that $f$ is semiconcave on $U$ (recall Remark~\ref{RemFu}).

     Assume now that $f$ is semiconcave on $W\cap B(0,\delta)$ for some $\delta>0$. Then, the restriction $f|_{W\cap B(0,\delta)}$ has a Lipschitz semiconcave extension $g:W\to\R$ (see \cite[Proposition~1.7]{Fu85}) and we have $\reach\hyp g>0$ by \cite[Corollary~2.8]{Fu85} (again, recall Remark~\ref{RemFu}). Since
    $\hyp g\cap B(0,\delta)=\hyp f\cap B(0,\delta)$, 
    we get $\reach(\hyp f,0)>0$ by Lemma~\ref{local}. This proves the second implication.
\end{proof}

\section{Lipschitzness of $\psi^A_k$ on some subsets of $T_k(A)$}

Our approach (for proving the  Lipschitzness of $\psi^A_k$) is 
based on the notion of fullness of simplices and on the tangential regularity of sets with positive reach, and it
is more elementary and also works in more cases than that of \cite{Ly24}.
The main idea is contained in the proof of Lemma~\ref{H}. 
First we present some definitions and several simple lemmas.

\begin{definition}\label{gap}
	Let $1\leq k\leq d-1$ and $U,V\in G(d,k)$. We set
	 $$  \rho_k(U,V) = \max\left( \sup_{u \in U\cap S^{d-1}} \dist(u,V), \sup_{v \in V \cap S^{d-1}} \dist(v,U)\right).$$
\end{definition}
However, it is well-known that in fact,
\begin{equation}  \label{gap2}
    \rho_k(U,V) = \sup_{u \in U\cap S^{d-1}} \dist(u,V),
\end{equation}
for which we know a direct reference only in the complex case (\cite[Lemma~3.2]{Morris}); however the real case is e.g.\ an obvious consequence of \cite[Lemma~2]{HRW13}.
Recall also (see \cite[Theorems~2.251, 2.250]{ThI}) that
\begin{equation}\label{vlga}
	\rho_k(U,V)= \|\pi_U-\pi_V\|\, \text{ and }\, \rho_{d-k}(U^{\perp},V^{\perp}) = \rho_k(U,V).
\end{equation}
It follows that $\rho_k$ is a metric on $G(d,k)$. It is also well-known that $G(d,k)$ with $\rho_k$ is  compact (see e.g.\ \cite[Proposition~2.253 and (2.91)]{ThI}).

If $A\subset \R^d$ is a set of positive reach, we will often classify its points $x$ according the magnitude of tangent cone $\Tan(A,x)$. For the short, we introduce the following notation.

Given $A\subset\R^d$ with positive reach, we set
$$\wtilde \Tan(A,x):= \spa \Tan(A,x),\quad x\in A.$$
Further, for $k=0,1,\dots,d$, we will use the notation
\begin{align*}
    T_k(A)&:= \{x\in A:\ \dim (\Tan(A,x))=k\},\\
    T_k^+(A)&:= \{x\in T_k(A):\ \wtilde \Tan(A,x)= \Tan(A,x) \},\text{ and}\\
    T_k^-(A)&:= T_k(A) \setminus T_k^+(A).
\end{align*}
Note that if $\dim A=k$ then, using Lemma~\ref{P_PR}~(ii),(iv) we have
\begin{equation}   \label{tan_k}
    A=T_0(A)\cup T_1(A)\cup\dots\cup T_k(A).
\end{equation}
We also define the mapping  $\psi_k^A: T_k(A) \to G(d,k)$ by
$$   \psi_k^A(x):= \wtilde \Tan(A,x),\ \ x \in T_k(A).$$
On   $G(d,k)$  we always consider the metric $\rho_k$ (see Definition \ref{gap}).
Following  \cite{Ly24}, we will look for subsets of $T_k(A)$ on which the mapping $\psi_k^A$ is Lipschitz.

We will need the following two easy lemmas.

\begin{lemma}\label{tkdif}
Let $U\subset \R^d$ be open, $\Phi: U \to \R^d$ a $C^1$ diffeomerphism, $a\in A \subset U$ and $0\leq k \leq d$. Then
\begin{enumerate}
	\item[{\rm (i)}]
	$\Tan(\Phi(A), \Phi(a))=D\Phi(a)(\Tan(A,a)),$
	\item[{\rm (ii)}]
	$\wtilde\Tan(\Phi(A), \Phi(a))=D\Phi(a)(\wtilde\Tan(A,a)),$
	\item[{\rm (iii)}]
	$T_k(\Phi(A))=  \Phi(T_k(A))$ if $\reach A>0$.
\end{enumerate}
\end{lemma}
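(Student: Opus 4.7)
The plan for (i) is to verify both inclusions directly from the definition of the contingent cone, using the first-order Taylor expansion of $\Phi$ at $a$. Given a nonzero $u\in\Tan(A,a)$, choose $A\ni a_i\to a$ with $a_i\neq a$ and $r_i>0$ such that $r_i(a_i-a)\to u$. Then $r_i|a_i-a|$ stays bounded and $|a_i-a|\to 0$, so writing $\Phi(a_i)-\Phi(a)=D\Phi(a)(a_i-a)+o(|a_i-a|)$ and multiplying by $r_i$ yields $r_i(\Phi(a_i)-\Phi(a))\to D\Phi(a)u$, which witnesses $D\Phi(a)u\in\Tan(\Phi(A),\Phi(a))$. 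The reverse inclusion follows by the symmetric argument applied to the $C^1$ diffeomorphism $\Phi^{-1}$ at $\Phi(a)$, using that $D(\Phi^{-1})(\Phi(a))=D\Phi(a)^{-1}$.

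Part (ii) is immediate from (i) together with the elementary fact that for any linear bijection $L$ of $\R^d$ and any set $S\subset\R^d$ one has $\spa L(S)=L(\spa S)$; I would apply this with $L=D\Phi(a)$ and $S=\Tan(A,a)$.

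For (iii), the plan is to show that $a\in T_k(A)$ if and only if $\Phi(a)\in T_k(\Phi(A))$. The hypothesis $\reach A>0$ ensures via Lemma~\ref{P_PR}(ii) that $\Tan(A,a)$ is convex, and hence by (i) its image $\Tan(\Phi(A),\Phi(a))=D\Phi(a)(\Tan(A,a))$ is convex too, being the image of a convex set under a linear map. For a convex cone the (topological) dimension coincides with the dimension of its linear span, so by (ii) and the fact that the linear bijection $D\Phi(a)$ preserves dimensions of subspaces, $\dim\Tan(\Phi(A),\Phi(a))=\dim\Tan(A,a)$. This gives the claimed equality $T_k(\Phi(A))=\Phi(T_k(A))$ once we also note that $\Phi$ is a bijection $A\to\Phi(A)$.

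I do not anticipate any genuine obstacle; the entire lemma is routine given the definitions and the convexity of tangent cones for positive reach sets. The only point that deserves a brief comment is that ``dimension of the tangent cone'' as used in the definition of $T_k$ is unambiguous precisely because of convexity coming from positive reach, which is why (iii) carries the extra hypothesis absent from (i) and (ii).
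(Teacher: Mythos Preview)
Your proposal is correct and follows essentially the same approach as the paper, which simply cites Federer \cite[\S 3.1.21]{Fed69} for (i) and says (ii) and (iii) ``can be easily obtained from (i).'' You spell out the direct contingent-cone argument that Federer's result encapsulates, and your derivations of (ii) and (iii) are exactly the easy consequences the paper has in mind.
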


\begin{proof}
    Statement (i) follows from \cite[\S 3.1.21]{Fed69}. Assertions (ii) and (iii) can be easily obtained from (i).
\end{proof}

\begin{lemma}   \label{T_k}
    Let $A\subset\R^d$ have positive reach and $1\leq k\leq d$. Then
    \begin{enumerate}
        \item[{\rm (i)}] $T_k(A)\cap\overline{\bigcup_{i=0}^{k-1}T_i(A)}=\emptyset$,
        \item[{\rm (ii)}] the mapping $\psi_k^A$ is continuous on $T_k(A)$,
        \item[{\rm (iii)}] $T_1(A)$ is closed,
        \item[{\rm (iv)}] $T_0(A)$ is the set of all isolated points of $A$.
    \end{enumerate}
\end{lemma}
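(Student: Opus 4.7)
The plan is to ground all four assertions in the lower semicontinuity of the multimapping $x\mapsto\Tan(A,x)$ (Lemma~\ref{P_PR}~(vii)) together with the convexity of the tangent cones (Lemma~\ref{P_PR}~(ii)). The key preliminary observation I would establish first is that the integer-valued function $x\mapsto\dim\Tan(A,x)$ is itself lower semicontinuous on $A$. Indeed, given $x_n\to x$ in $A$ and writing $k:=\dim\Tan(A,x)$, I can choose vectors $v_1,\dots,v_k\in\Tan(A,x)$ forming a basis of $\wtilde\Tan(A,x)$ (available because a $k$-dimensional convex cone spans a $k$-dimensional linear subspace). By Lemma~\ref{P_PR}~(vii) one obtains $w_{j,n}\in\Tan(A,x_n)$ with $w_{j,n}\to v_j$ for each $j$; since linear independence is an open condition, the $w_{j,n}$ are linearly independent for $n$ large, forcing $\dim\Tan(A,x_n)\geq k$ for such $n$.

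From this core step the remaining claims follow quickly. For (iv), direct inspection of the definition of $\Tan(A,x)$ shows that $\Tan(A,x)=\{0\}$ exactly when no sequence in $A\setminus\{x\}$ converges to $x$, i.e.\ when $x$ is isolated in $A$. Assertion (i) is then immediate: if $x\in T_k(A)$ were the limit of some sequence in $\bigcup_{i<k}T_i(A)$, passing to a subsequence with constant value $\dim\Tan(A,x_n)=i<k$ would contradict $\liminf\dim\Tan(A,x_n)\geq k$. For (ii), I fix $x\in T_k(A)$ and $x_n\to x$ in $T_k(A)$, and take a basis $v_1,\dots,v_k$ together with approximators $w_{j,n}$ as in the core step; for $n$ large these approximators form a basis of the $k$-dimensional space $\wtilde\Tan(A,x_n)$. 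Then every unit vector $u=\sum_j\alpha_jv_j\in\wtilde\Tan(A,x)$ is approximated by $\sum_j\alpha_jw_{j,n}\in\wtilde\Tan(A,x_n)$, with the coefficients $|\alpha_j|$ bounded uniformly in $u$ (by the continuity of the coordinate map for the fixed basis). Formula \eqref{gap2} then yields $\rho_k(\wtilde\Tan(A,x_n),\wtilde\Tan(A,x))\to 0$.

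For (iii), let $x_n\in T_1(A)$ converge to some $x\in A$ (the limit indeed lies in $A$ because $\reach A>0$ forces $A$ to be closed). I may assume $x_n\neq x$ for all $n$, so passing to a subsequence, the unit vector $(x_n-x)/|x_n-x|$ converges to some $u\in S^{d-1}\cap\Tan(A,x)$, giving $\dim\Tan(A,x)\geq 1$. The core observation applied with the constant sequence $\dim\Tan(A,x_n)=1$ yields the reverse inequality $\dim\Tan(A,x)\leq 1$, whence $x\in T_1(A)$. I do not anticipate a substantial obstacle: the only delicate point is the conversion of Lemma~\ref{P_PR}~(vii) (lower semicontinuity as a multimapping) into the integer-valued lower semicontinuity of $\dim\Tan(A,\cdot)$, which requires that the topological dimension of a convex cone with apex at $0$ agree with the dimension of its linear span — once that is invoked, each of the four assertions follows mechanically.
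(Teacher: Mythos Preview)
Your proposal is correct and follows essentially the same route as the paper: both proofs pick a basis $v_1,\dots,v_k$ of $\wtilde\Tan(A,x_0)$, use the lower semicontinuity of $x\mapsto\Tan(A,x)$ (Lemma~\ref{P_PR}~(vii)) to approximate it by vectors in nearby tangent cones, and exploit the openness of linear independence. The only cosmetic differences are that you isolate the lower semicontinuity of $x\mapsto\dim\Tan(A,x)$ as an explicit preliminary step and verify (ii) by hand via coefficient bounds and \eqref{gap2}, whereas the paper packages the same estimate by citing \cite[Lemma~2.252]{ThI}; and for (iii) you argue $\dim\Tan(A,x)\leq 1$ and $\geq 1$ directly, while the paper deduces it from (i) and (iv).
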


\begin{proof}
    Let $x_0\in T_k(A)$. We can choose $k$ linearly independent vectors $v_1,\dots,v_k\in\Tan(A,x_0)$; these vectors form a basis of $\widetilde{\Tan}(A,x_0)$. Now consider an arbitrary $\ep>0$. By \cite[Lemma~2.252]{ThI} there exists $\delta>0$ such that
    \begin{align}   \label{thib}
        \text{if }w_i\in B(v_i,\delta),\, &i=1,\dots,k,\text{ then } \spa(w_1,\dots,w_k)\in G(d,k)\\
        &\text{and }\rho_k\left(\spa(w_1,\dots,w_k),\widetilde{\Tan}(A,x_0)\right)<\ep. \nonumber
    \end{align}
    Since the multivalued mapping $x\mapsto\Tan(A,x)$ is lower semicontinuous (see Lemma~\ref{P_PR}~(vii)), we can choose $\omega>0$ such that for each $x\in B(x_0,\omega)$ there exist vectors
    $w_i\in B(v_i,\delta)\cap\Tan(A,x)$, $i=1,\dots,k$. By \eqref{thib} we obtain $\spa(w_1,\dots,w_k)\in G(d,k)$ and, hence, $x\not\in\overline{\bigcup_{i=0}^{k-1}T_i(A)}$. This proves (i).

    Further, if $x\in B(x_0,\omega)\cap T_k(A)$ then $\spa(w_1,\dots,w_k)=\widetilde{\Tan}(A,x)$ and so \eqref{thib} gives 
    $\rho_k\left(\psi_k^A(x),\psi_k^A(x_0)\right)<\ep$. Thus we have proved the continuity of $\psi_k^A$ at $x_0$ and (ii) follows.

    Assertion (iv) follows easily from the definition of tangent vectors. If $a\in\overline{T_1(A)}$ then obviously $a\in A$ and $a\not\in T_0(A)$ by (iv). Further, by (i), $a\not\in T_k(A)$ for each $1<k\leq d$, hence $a\in T_1(A)$. This proves (iii). 
\end{proof}

We easily obtain that $\psi_1^A$ is globally Lipschitz:

\begin{proposition}  \label{k=1}
    Assume that $A\subset\R^d$ and $0<r<\reach A$. Then $\psi_1^A$ is $\frac 1r$-Lipschitz on $T_1(A)$.
\end{proposition}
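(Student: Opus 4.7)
The plan is to treat two cases according to whether $|a-b|\geq r$. In the trivial case $|a-b|\geq r$, since $\rho_1\leq 1$ on $G(d,1)$ (immediate from Definition~\ref{gap}), we have $\rho_1(\psi_1^A(a),\psi_1^A(b))\leq 1\leq|a-b|/r$. Otherwise $|a-b|<r$, and I apply Corollary~\ref{uhlovy} symmetrically at $a$ and at $b$. Writing $L_a:=\psi_1^A(a)$ and $L_b:=\psi_1^A(b)$, the ``moreover'' clause at $a$ says that the orthogonal projection $\pi(b)$ of $b$ onto the affine line $a+L_a$ lies in $a+\Tan(A,a)$; hence $\pi(b)-a\in\Tan(A,a)$ and Proposition~\ref{fedtan} gives $|b-\pi(b)|=\dist(b-a,\Tan(A,a))\leq|b-a|^2/(2r)$. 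Equivalently, if $\theta_a\in[0,\pi/2]$ denotes the acute angle between $L_a$ and the vector $b-a$, then $\sin\theta_a=|b-\pi(b)|/|b-a|\leq|b-a|/(2r)$. The symmetric argument at $b$ yields $\sin\theta_b\leq|b-a|/(2r)$; in particular $\theta_a,\theta_b\leq\pi/6$.

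To conclude, I choose unit vectors $u_a\in L_a$ and $u_b\in L_b$ with $\langle u_a,b-a\rangle\geq 0$ and $\langle u_b,b-a\rangle\geq 0$. By the spherical triangle inequality applied to the triple $u_a,(b-a)/|b-a|,u_b\in S^{d-1}$, the angle $\alpha=\angle(u_a,u_b)$ satisfies $\alpha\leq\theta_a+\theta_b\leq\pi/3<\pi/2$, so $\alpha$ is also the acute angle between the lines $L_a$ and $L_b$. By~\eqref{gap2} one then has $\rho_1(L_a,L_b)=\dist(u_a,L_b)=\sin\alpha$, and combining the monotonicity of $\sin$ on $[0,\pi/2]$ with the elementary inequality $\sin(x+y)\leq\sin x+\sin y$ gives
\[\rho_1(L_a,L_b)=\sin\alpha\leq\sin(\theta_a+\theta_b)\leq\sin\theta_a+\sin\theta_b\leq|b-a|/r.\]
There is no substantial obstacle here: this is the base case $k=1$ and the entire argument reduces to Proposition~\ref{fedtan} applied at both endpoints together with a short trigonometric estimate; the only minor point that deserves notice is the identification of $\rho_1$ on pairs of lines with the sine of their acute angle, which is immediate from~\eqref{gap2}.
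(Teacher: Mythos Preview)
Your proof is correct and follows essentially the same approach as the paper: apply Proposition~\ref{fedtan} at both $a$ and $b$ to control the angle between $\spa\{b-a\}$ and each of $\psi_1^A(a),\psi_1^A(b)$, then combine via a triangle inequality. The paper's version is slightly leaner: it works directly with the metric $\rho_1$ (showing $\rho_1(\spa\{b-a\},L_a)\leq|b-a|/(2r)$ and $\rho_1(\spa\{b-a\},L_b)\leq|b-a|/(2r)$ and using the triangle inequality for $\rho_1$) rather than passing through angles and sines, which makes your case split $|a-b|\gtrless r$ unnecessary.
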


\begin{proof}  
    Let $a,b\in T_1(A)$, $a\neq b$. Using Proposition~\ref{fedtan} we find that there exists $u\in\Tan(A,a)$ such that $|b-a-u|\leq|b-a|^2/2r$, hence 
$$\left|\frac{b-a}{|b-a|}-\frac{u}{|b-a|}\right|=\left|\frac{a-b}{|b-a|}-\frac{-u}{|b-a|}\right|\leq\frac{|b-a|}{2r}.$$ 
Consequently, using \eqref{gap2} we have $\rho_1(\spa\{b-a\},\widetilde{\Tan}(A,a))\leq \frac{|b-a|}{2r}.$ Analogously we obtain $\rho_1(\spa\{a-b\},\widetilde{\Tan}(A,b))\leq \frac{|b-a|}{2r}$, and since $\spa\{a-b\}=\spa\{b-a\}$, the assertion follows.
\end{proof}  

However, in the case $k\geq 2$, $\psi_k^A$ is only localy Lipschitz and the proof is much more involved; we will need several auxiliary notions and lemmas.
	
Let  $2\leq k\leq d$ be integers. If $a_0,a_1,\dots,a_k \in \R^d$, we define
$$\sigma(a_0,\dots, a_k):= \conv\{a_0,\dots,a_k\}$$ 
and its  $k$-dimensional volume will
	 be denoted by  $|\sigma(a_0,\dots, a_k)  |$. If $a_0,a_1,\dots,a_k$ are affinely independent,
	 then $\sigma(a_0,\dots, a_k)$ is the simplex with vertices  $a_0,a_1,\dots,a_k$.

Following  \cite[p.~125]{Whi57} we define the  {\it fullness} of  $\sigma= \sigma(a_0,\dots, a_k)$  with $\diam\sigma>0$ as
	\begin{equation}\label{ful}
	  \Theta(\sigma):=  \frac{|\sigma|}{(\diam\sigma)^k}.
		\end{equation}
		The notion of fullness (which is ``almost equivalent'' to the more frequent notion of ``thickness'')
		 will be useful for us; we will use its  following  properties from \cite{Whi57}.

\begin{lemma}\label{otlus}
	Let  $2\leq k\leq d$ and  $\sigma= \sigma(a_0,\dots, a_k) \subset \R^d$ be given. Then
	 the following assertions hold.
\begin{enumerate}
\item[{\rm (i)}] $\diam\sigma= \max\{|a_i-a_j|:\ 0\leq i<j \leq k\}$, $\Theta(\sigma) \leq \frac{1}{k!}$. 
\item[{\rm (ii)}] $\min\{|a_i-a_j|:\ 0\leq i<j \leq k \} \geq k!\,  \Theta(\sigma)\,  \diam\sigma.$
\item[{\rm (iii)}] If $\sigma$ is similar to a simplex $\sigma^*$, then $\Theta(\sigma)= \Theta(\sigma^*)$.
\item[{\rm (iv)}] If  $\sigma^p= \sigma(a^p_0,\dots, a^p_k) \subset \R^d$, $p\in \N$, 
and $a_j^p  \to a_j,\ j=0,\dots, k$, then $\Theta(\sigma^p) \to \Theta(\sigma)$.
\item[{\rm (v)}] If $\Theta(\sigma)>0$ and $\lambda_1,\dots,\lambda_k$ are real numbers, then
$$ |\lambda_i| \leq  \frac{|\sum_{j=1}^{k} \lambda_j (a_j-a_0)|}{k! \Theta(\sigma) |a_i-a_0|},\quad	 i=1,\dots, k.$$
\end{enumerate}
\end{lemma}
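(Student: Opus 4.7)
My plan is to derive all five parts from the single identity
\[
k!\,|\sigma(a_0,\dots,a_k)| \;=\; \bigl|\det[a_1-a_0,\dots,a_k-a_0]\bigr|
\]
together with Hadamard's inequality $|\det[v_1,\dots,v_k]|\le\prod_{i}|v_i|$ and elementary continuity/invariance properties of determinants.

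For (i), the identity $\diam\sigma=\max_{i<j}|a_i-a_j|$ follows because $x\mapsto|x-y|$ is convex in $x$ and attains its maximum over a convex polytope at a vertex; applying this in both arguments shows that $\diam\sigma$ is attained on a pair of vertices. The bound $\Theta(\sigma)\le 1/k!$ is then immediate from the displayed identity and Hadamard: with $v_i:=a_i-a_0$ we have $k!\,|\sigma|\le\prod_i|v_i|\le(\diam\sigma)^k$. For (ii), let $m$ be the smallest edge length; after relabelling so that a shortest edge is $[a_0,a_1]$ (permissible since the fullness and the minimum edge are symmetric functions of the vertices), the same bound yields $k!\,|\sigma|\le m\,(\diam\sigma)^{k-1}$, which rearranges exactly to the claim.

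Parts (iii) and (iv) come essentially for free. Any similarity of ratio $\lambda>0$ scales $|\sigma|$ by $\lambda^k$ and $\diam\sigma$ by $\lambda$, so $\Theta$ is invariant. For (iv), $|\sigma^p|$ depends polynomially and $\diam\sigma^p$ depends continuously (as a maximum of finitely many norms) on the vertex coordinates, so both converge to $|\sigma|$ and $\diam\sigma$, and hence $\Theta(\sigma^p)\to\Theta(\sigma)$.

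For (v), the crucial observation is multilinearity: if $\Theta(\sigma)>0$ then $v_1,\dots,v_k$ are linearly independent, and replacing the $i$th column of $\det[v_1,\dots,v_k]$ by $\sum_j\lambda_jv_j$ leaves only the $j=i$ contribution, yielding
\[
|\lambda_i|\cdot k!\,|\sigma| \;=\; \Bigl|\det\bigl[v_1,\dots,v_{i-1},\textstyle\sum_j\lambda_jv_j,v_{i+1},\dots,v_k\bigr]\Bigr| \;\le\; \Bigl|\textstyle\sum_j\lambda_jv_j\Bigr|\prod_{j\ne i}|v_j|
\]
by Hadamard. Writing $\prod_{j\ne i}|v_j|=\prod_{j=1}^k|v_j|/|v_i|\le(\diam\sigma)^k/|v_i|$ and substituting $k!\,|\sigma|=k!\,\Theta(\sigma)(\diam\sigma)^k$ gives the stated bound. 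The main subtlety I anticipate is precisely this last step: one must resist bounding $\prod_{j\ne i}|v_j|$ directly by $(\diam\sigma)^{k-1}$, since that would produce $\diam\sigma$ rather than the sharper $|a_i-a_0|$ in the denominator; keeping $|v_i|$ explicit requires the small trick of multiplying and dividing by $|v_i|$ and then applying Hadamard to the full product. Everything else is bookkeeping.
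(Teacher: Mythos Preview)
Your proof is correct and self-contained; the paper's proof consists essentially of references to Whitney's book, and your determinant--Hadamard arguments are the standard way to obtain those inequalities, so the two are the same in substance. One small remark on your commentary for (v): bounding $\prod_{j\ne i}|v_j|$ directly by $(\diam\sigma)^{k-1}$ would put $\diam\sigma$ in the denominator, which is \emph{larger} than $|a_i-a_0|$ and hence yields a \emph{stronger} inequality than the one stated, not a weaker one; so your ``multiply and divide by $|v_i|$'' trick is harmless but not actually needed to reach the claimed bound. Also, for $k<d$ your expression $\det[v_1,\dots,v_k]$ should be read as the Gram determinant (equivalently, the determinant after passing to coordinates in the $k$-plane $\operatorname{span}\{v_1,\dots,v_k\}$); this is implicit in your use of multilinearity and Hadamard, and is fine since $\Theta(\sigma)>0$ guarantees the $v_i$ are independent.
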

	
\begin{proof}
	The first part of (i) is easy and well-known, for the second one see \cite[p.~125, (3)]{Whi57}.
	For (ii) see \cite[p.~126, (5)]{Whi57} and (iii) is obvious. The proof of (iv) is easy, cf.\
	  \cite[p.~126, Lemma~14c]{Whi57} (and the note before it). To prove (v), set $u_i:= (a_i-a_0)/|a_i-a_0|$
		  and  $\lambda_i^*:= \lambda_i |a_i-a_0|$. Then  \cite[p.~127, (3)]{Whi57} gives
			 $ |\lambda_i^*| \leq  |\sum_{j=1}^{k} \lambda_j^* u_j|/k! \Theta(\sigma)$ and
			the assertion of (v) follows.
\end{proof}

	Moreover, we need the following lemma.
	\begin{lemma}\label{vymena}
	Let $2\leq k \leq d$, $\theta>0$, $v_1,\dots,v_k, w \in S^{d-1}$, $\sigma= \sigma(0,v_1,\dots,v_k)$ and
	 $\Theta(\sigma) \geq \theta$. Then there exists  $1\leq i \leq k$ such that
	 $$\Theta(\sigma(0,v_1,\dots,v_{i-1},w,v_{i+1},\dots,v_k)) \geq  \theta/(k 2^{k+1}).$$
	\end{lemma}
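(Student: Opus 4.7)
The plan is to decompose $w$ with respect to the $k$-dimensional subspace $W:=\spa(v_1,\dots,v_k)$ (the hypothesis $\Theta(\sigma)\geq\theta>0$ makes $v_1,\dots,v_k$ linearly independent). Writing $w=p+q$ with $p=\sum_{j=1}^k c_jv_j\in W$ and $q\perp W$, we have $|p|^2+|q|^2=1$, so at least one of $|q|\geq 1/\sqrt 2$ or $|p|\geq 1/\sqrt 2$ holds; in the second case the triangle inequality $|p|\leq\sum_j|c_j|$ produces an index $i$ with $|c_i|\geq 1/(k\sqrt 2)$. The argument will split along these two alternatives.

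The central step is to express the volume of $\sigma_i:=\sigma(0,v_1,\dots,v_{i-1},w,v_{i+1},\dots,v_k)$ in terms of $|\sigma|$. Let $H_i\subset W$ be the $(k-1)$-plane through $0$ spanned by $\{v_j\}_{j\neq i}$, let $h_i:=\dist(v_i,H_i)$, and let $P_{-i}$ denote the $(k-1)$-volume of the parallelepiped spanned by $\{v_j\}_{j\neq i}$; then $|\sigma|=\tfrac{1}{k!}h_iP_{-i}$ and $|\sigma_i|=\tfrac{1}{k!}\dist(w,H_i)P_{-i}$ by the standard base/height formula for simplices, since the affine hull of $\{0\}\cup\{v_j\}_{j\neq i}$ is exactly $H_i$. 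Decomposing $v_i=\pi_{H_i}(v_i)+\tilde v_i$ with $\tilde v_i\perp H_i$ and $|\tilde v_i|=h_i$, the component of $p$ orthogonal to $H_i$ equals $c_i\tilde v_i$, and since $q\perp W\supset H_i$, the Pythagorean theorem gives $\dist(w,H_i)^2=c_i^2h_i^2+|q|^2$. Combining these yields
\[
|\sigma_i|=|\sigma|\sqrt{c_i^2+|q|^2/h_i^2}.
\]
Because $|v_i|=1$ forces $h_i\leq 1$, in the first alternative ($|q|\geq 1/\sqrt 2$) \emph{every} index $i$ satisfies $|\sigma_i|\geq|\sigma|/\sqrt 2$, while in the second alternative the chosen index $i$ satisfies $|\sigma_i|\geq|\sigma|/(k\sqrt 2)$. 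Either way, some $i$ yields $|\sigma_i|\geq|\sigma|/(k\sqrt 2)$.

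To convert this into a fullness bound I will use that all vertices of $\sigma$ and of $\sigma_i$ lie in $\{0\}\cup S^{d-1}$, so $\diam\sigma_i\leq 2$ and $\diam\sigma\geq|v_1|=1$. Consequently
\[
\Theta(\sigma_i)=\frac{|\sigma_i|}{(\diam\sigma_i)^k}\geq\frac{|\sigma|}{k\sqrt 2\cdot 2^k}\geq\frac{\theta(\diam\sigma)^k}{k\cdot 2^{k+1/2}}\geq\frac{\theta}{k\cdot 2^{k+1/2}}>\frac{\theta}{k\cdot 2^{k+1}},
\]
which is the required estimate. The only technical content is the volume identity above; once it is in hand, the case analysis and the conversion via the diameter bounds are elementary.
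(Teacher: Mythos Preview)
Your proof is correct and follows essentially the same strategy as the paper's: orthogonally decompose $w$ with respect to $\spa(v_1,\dots,v_k)$, split into cases according to which component is large, and in the tangential case pick the index $i$ with the largest coefficient; then compare volumes via the base/height formula and finish with the diameter bounds $1\leq\diam\sigma$, $\diam\sigma_i\leq 2$. The only differences are cosmetic: the paper splits at $1/2$ rather than $1/\sqrt 2$ and handles the two cases by separate volume estimates (using the projection $\pi_L w$ and the inequality $|\sigma_w|\geq|\sigma_{\pi_L w}|$ in the tangential case), whereas you derive the single clean identity $|\sigma_i|=|\sigma|\sqrt{c_i^2+|q|^2/h_i^2}$, which treats both cases at once and even yields the slightly sharper constant $\theta/(k\,2^{k+1/2})$.
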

	\begin{proof}  
	Denote  $L:= \spa\{v_1,\dots,v_k\}$, $\tilde w:= \pi_L(w)$ and $z:= w- \tilde w \in L^{\perp}$.
	Write  $\tilde w = \sum_{j=1}^k a_j v_j$ and choose $1\leq i \leq k$ with  $|a_i|= \max(|a_1|,\dots, |a_k|)$.
	Denote
	\begin{align*} \sigma^-&:=  \sigma(0,v_1,\dots,v_{i-1}, v _{i+1},\dots, v_k), \\ 
	\sigma_w&:= \sigma(0,v_1,\dots,v_{i-1},w, v _{i+1},\dots, v_k), \\
	\sigma_{\tilde w}&:= \sigma(0,v_1,\dots,v_{i-1},\tilde w, v _{i+1},\dots, v_k).
	\end{align*}
	Then  $\sigma_{\tilde w}= \pi_L(\sigma_w)$ and consequently  $|\sigma_w| \geq |\sigma_{\tilde w}|$.
	Since  $w=z + \tilde w$, we have either  $|z| \geq 1/2$ or $|\tilde w| \geq 1/2$.
	
	First consider the case  $|z| \geq 1/2$. Then, since clearly  
	$|z| \leq \dist(w, \spa \sigma^-)$ and   $|\sigma| \leq |\sigma^-|$, we obtain 
	$$|\sigma_w|=(1/k)|\sigma^-|\,\dist(w, \spa \sigma^-) \geq (1/k)|\sigma^-|\cdot |z| \geq (1/2k) |\sigma|.$$ 
	In the case  $|\tilde w| \geq 1/2$ we obtain  
	$$  k |a_i| \geq  \sum_{j=1}^k |a_j|  \geq  |\tilde w| \geq  \frac{1}{2}$$
	 and consequently  $|a_i| \geq  1/(2k)$. 
	 Now choose  $u \in L \cap S^{d-1}$ for which  $u \perp v_j$,\ $j \neq i$. Then clearly
	$$  |\sigma| = \frac{1}{k}  |\sigma^-| \cdot | \langle u, v_i \rangle|,\ \ 
	|\sigma_{\tilde w}| = \frac{1}{k}  |\sigma^-| \cdot | \langle u, \tilde w \rangle| $$
	 and so, using also the equalities
	$$ | \langle u, \tilde w \rangle|  = \left| \langle u,\sum_{j=1}^k a_j v_j \rangle \right | =
	 | \langle u, v_i \rangle|\cdot |a_i|,$$
	 we obtain  $|\sigma_w|\geq |\sigma_{\tilde w}| = |a_i|\cdot |\sigma| \geq (1/2k)|\sigma|$.
	So, since $\diam \sigma_{ w} \leq 2$ and $\diam \sigma \geq 1$ by Lemma~\ref{otlus}~(i),
	 we obtain in both cases
\begin{align*}
  \Theta(\sigma_{w})&= \frac{|\sigma_{ w}|}{(\diam \sigma_{w})^k}    
	\geq\frac{(1/2k)|\sigma|}{(\diam \sigma_{w})^k}\\
    &\geq 
	 \frac{|\sigma|}{2k\cdot 2^k (\diam \sigma)^k} = \frac{\Theta(\sigma)}{k2^{k+1}}\geq 
	\frac{\theta}{k2^{k+1}}.
\end{align*}
	 \end{proof}

\begin{definition}\label{related}
Let   $2\leq k\leq d$, $A \subset \R^d$ and $\theta >0$. We say that the points $a,b \in \R^d$, $a\neq b$, are 
\emph{$(A,k,\theta)$-related} if there exist  points  $z_1,\dots, z_{k-1} \in A$ such that
 $\Theta(\sigma(a,b,z_1,\dots, z_{k-1})) \geq \theta$.
\end{definition}

	\begin{lemma}\label{H}
	Let $2\leq k < d$, 
	 $A \subset \R^d$, $0< r < \reach A$, $\theta>0$, and let $a,b \in T_k(A)$ be
	 $(A,k,\theta)$-related. Then
	$$ \rho_k(\psi_k^A(a), \psi_k^A(b)) \leq L_{k,\theta,r} |b-a|,$$
	 where   $L_{k,\theta,r}:= \frac{k}{ (k! \theta)^2 r}$  .
	\end{lemma}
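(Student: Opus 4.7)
The strategy is to introduce an auxiliary $k$-dimensional subspace $W\subset\R^d$ --- the linear span of the edges of $\sigma:=\sigma(a,b,z_1,\dots,z_{k-1})$ emanating from $a$ --- bound $\rho_k(W,\widetilde{\Tan}(A,a))$ and $\rho_k(W,\widetilde{\Tan}(A,b))$ separately using Federer's inequality (Proposition \ref{fedtan}) together with the fullness estimate from Lemma \ref{otlus}(v), and then close via the triangle inequality in $(G(d,k),\rho_k)$. The same $W$ is available from both viewpoints because the edges emanating from $b$ differ from those emanating from $a$ only by the common translation $b-a$, so they span the same subspace.

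For the bound from $a$, label $a_0:=a$, $a_1:=b$, $a_j:=z_{j-1}$ ($2\leq j\leq k$), $v_j:=a_j-a_0$, and $D:=\diam\sigma$. The hypothesis $\Theta(\sigma)\geq\theta>0$ forces $v_1,\dots,v_k$ to be linearly independent, so $\dim W=k$. Proposition \ref{fedtan} yields $u_j\in\Tan(A,a)$ with $|v_j-u_j|\leq|v_j|^2/(2r)$. For any unit vector $v=\sum_j\lambda_j v_j\in W$, Lemma \ref{otlus}(v) gives $|\lambda_j|\leq 1/(k!\,\Theta(\sigma)\,|v_j|)$, hence
\begin{equation*}
\dist(v,\widetilde{\Tan}(A,a))\leq\sum_{j=1}^k|\lambda_j|\,|v_j-u_j|\leq\frac{kD}{2r\,k!\,\Theta(\sigma)}.
\end{equation*}
By \eqref{gap2} this is already a bound on $\rho_k(W,\widetilde{\Tan}(A,a))$, and relabelling the vertices $a\leftrightarrow b$ produces the identical bound on $\rho_k(W,\widetilde{\Tan}(A,b))$.

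The triangle inequality then yields $\rho_k(\psi_k^A(a),\psi_k^A(b))\leq kD/(r\,k!\,\Theta(\sigma))$. Finally, Lemma \ref{otlus}(ii) applied to the edge $[a,b]$ of $\sigma$ gives $k!\,\Theta(\sigma)\,D\leq|b-a|$, and substituting $D\leq|b-a|/(k!\,\Theta(\sigma))$ produces
\begin{equation*}
\rho_k(\psi_k^A(a),\psi_k^A(b))\leq\frac{k|b-a|}{r\,(k!\,\Theta(\sigma))^2}\leq\frac{k|b-a|}{r\,(k!\,\theta)^2}=L_{k,\theta,r}|b-a|.
\end{equation*}
The only mild subtlety is to route the comparison through the common auxiliary subspace $W$ rather than attempting to match tangent vectors at $a$ with tangent vectors at $b$ directly; everything else is a direct combination of Federer's reach inequality with the properties of fullness collected in Lemma \ref{otlus}.
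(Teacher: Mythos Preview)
Your proof is correct and follows essentially the same approach as the paper: both introduce the auxiliary $k$-dimensional subspace spanned by the edges of the simplex (the paper calls it $V$), bound its $\rho_k$-distance to each of $\widetilde{\Tan}(A,a)$ and $\widetilde{\Tan}(A,b)$ via Proposition~\ref{fedtan} and Lemma~\ref{otlus}(v),(ii), and conclude by the triangle inequality. The only cosmetic difference is that the paper applies Lemma~\ref{otlus}(ii) inside the sum (bounding each $|z_j-z_0|$ by $|b-a|/(k!\theta)$ directly), whereas you first bound by $D=\diam\sigma$ and substitute $D\leq|b-a|/(k!\theta)$ at the end.
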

	\begin{proof}
	Choose    points  $z_1,\dots, z_{k-1} \subset A$ such that
 $\Theta(\sigma(a,b,z_1,\dots, z_{k-1})) \geq \theta$. Denote $z_0:=a$, $z_k:=b$ and
 $V:=  \spa( z_1-z_0,\dots, z_k-z_0)$.  By  Proposition  \ref{fedtan}
 there exist vectors  $v_1,\dots,v_k \in \Tan(A,z_0)$ such that
 \begin{equation}\label{aplfed}
|(z_j-z_0)-v_j| \leq  \frac{|z_j-z_0|^2}{2r},\quad j=1,\dots, k.
\end{equation}
 Now consider an arbitrary $u \in V \cap S^{d-1}$, write
	\begin{equation}\label{utt}
	u =  \sum_{j=1}^{k}    t_j (z_j -z_0)\ \ \ \text{and set}\ \ \  v :=  \sum_{j=1}^{k}    t_j v_j.
	\end{equation}
	By   Lemma \ref{otlus}  (v)  and  (ii)  we obtain that
$$   |t_j| \leq \frac{1}{k! \theta |z_j-z_0|}\ \ \ \text{and}\ \ \ |b-a|  \geq  k! \theta   |z_j-z_0|, \quad j=1,\dots,k.$$	  
Using these inequalities and	\eqref{aplfed} we obtain
\begin{multline*}
|u-v|   \leq  \sum_{j=1}^{k}   | t_j|   |(z_j -z_0)-v_j| \leq \sum_{j=1}^{k}  |t_j| \frac{|z_j-z_0|^2}{2r} \\
 \leq    \sum_{j=1}^{k}   \frac{  |z_j-z_0|}{2 k! \theta r} \leq \frac{k}{ 2(k! \theta)^2 r} |b-a|.
\end{multline*}	
Therefore, using that $u\in V\cap S^{d-1}$ was arbitrary,  $v \in \wtilde \Tan(A,a)$ and \eqref{gap2}, we obtain
	$$\rho_k(\wtilde \Tan(A,a), V) \leq  \frac{L_{k,\theta,r}}{2} |b-a|.$$
By the same way (setting $z_0:=b$ and $z_k:=a$) we obtain
	$$\rho_k(\wtilde \Tan(A,b), V) \leq  \frac{L_{k,\theta,r}}{2} |b-a|$$ 
	 and consequently  $\rho_k(\wtilde \Tan(A,a), \wtilde \Tan(A,b)) \leq L_{k,\theta,r} |b-a|$.
	\end{proof}

	\begin{lemma}\label{omR}
	Let $d \geq 3$, $2\leq k \leq d-1$ and $\theta$>0.  
	If  $A\subset \R^d$ has positive reach, $x \in A$,  
	 $v_1,\dots, v_k \in \Tan(A,x)\cap S^{d-1}$ and   $\Theta(\sigma(0,v_1,\dots,v_k)) \geq \theta$,
	 then there exists  $\delta>0$ such that if $a,b \in  B(x,\delta) \cap A$ and  $a \neq b$,
	 then $a,b$ are $(A,k,2^{-(k+2)}k^{-1}\theta)$-related.
	\end{lemma}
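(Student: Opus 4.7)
The plan is to combine the exchange Lemma~\ref{vymena} (which lets us insert the direction of $b-a$ among the $v_i$'s while preserving a proportional share of the fullness) with the Clarke tangential regularity of $A$ at $x$ (Lemma~\ref{P_PR}~(vi)), using the latter to realize the remaining $k-1$ tangent directions by honest points of $A$ near $a$ at a scale matching $|b-a|$.

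First, I would set $w := (b-a)/|b-a| \in S^{d-1}$. Lemma~\ref{vymena} applied to $0, v_1, \ldots, v_k$ and $w$ produces an index $i \in \{1, \ldots, k\}$ such that the simplex $\sigma^*$ obtained from $\sigma(0,v_1,\ldots,v_k)$ by replacing $v_i$ with $w$ satisfies $\Theta(\sigma^*) \geq \theta/(k 2^{k+1})$. Since fullness is invariant under a permutation of the vertices, I may relabel so that $i = k$, i.e., $\sigma^* = \sigma(0, w, v_1, \ldots, v_{k-1})$.

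Next, by tangential regularity each $v_j$ lies in $\Tan_C(A, x)$. A standard sequential compactness argument upgrades the Clarke condition~\eqref{Clarke} to the uniform statement: for every $\epsilon > 0$ there exists $\delta_\epsilon > 0$ such that for each $a \in A \cap B(x, \delta_\epsilon)$, each $\tau \in (0, \delta_\epsilon)$, and each $j \in \{1, \ldots, k-1\}$, one can find $z_j \in A$ with $|(z_j - a)/\tau - v_j| < \epsilon$. Given $a, b \in A \cap B(x, \delta_\epsilon/2)$ with $a \neq b$, I apply this with $\tau := |b - a| < \delta_\epsilon$ to produce $z_1, \ldots, z_{k-1} \in A$. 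Translation by $-a$ and dilation by $|b-a|^{-1}$ carry the simplex $\sigma' := \sigma(a, b, z_1, \ldots, z_{k-1})$ to $\tilde\sigma := \sigma(0, w, u_1, \ldots, u_{k-1})$, where $u_j := (z_j - a)/|b-a|$ satisfies $|u_j - v_j| < \epsilon$. By Lemma~\ref{otlus}~(iii), $\Theta(\sigma') = \Theta(\tilde\sigma)$, and by Lemma~\ref{otlus}~(iv), $\Theta(\tilde\sigma) \to \Theta(\sigma^*)$ as $\epsilon \to 0$ (uniformly in the choice of $u_j$'s within $\epsilon$ of $v_j$'s). Choosing $\epsilon$ small enough that $\Theta(\tilde\sigma) \geq \Theta(\sigma^*)/2 \geq \theta/(k 2^{k+2})$ and setting $\delta := \delta_\epsilon/2$ yields the desired $(A, k, 2^{-(k+2)} k^{-1} \theta)$-relatedness of $a$ and $b$.

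The main obstacle is purely bookkeeping: one must verify that the uniform Clarke property delivers $z_1, \ldots, z_{k-1}$ at the prescribed scale $|b-a|$ uniformly for $a$ close to $x$, and that the resulting loss (by at most a factor of two) in passing from the limit fullness $\Theta(\sigma^*)$ to $\Theta(\sigma')$ is consistent with the advertised constant $2^{-(k+2)}k^{-1}$ in the conclusion. Both ingredients---the uniform version of~\eqref{Clarke} and the vertex-continuity of fullness (Lemma~\ref{otlus}~(iv))---are routine once the exchange step has reduced the problem to approximating a single fixed simplex $\sigma^*$.
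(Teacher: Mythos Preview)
Your approach uses the same three ingredients as the paper (Lemma~\ref{vymena}, tangential regularity via the Clarke description~\eqref{Clarke}, and vertex-continuity of fullness, Lemma~\ref{otlus}~(iii),(iv)) and is essentially correct. The paper, however, argues by contradiction: assuming no $\delta$ works, it picks sequences $a_n,b_n\in A\cap B(x,1/n)$ that are not $(A,k,2^{-(k+2)}k^{-1}\theta)$-related, passes to a subsequence along which $(b_{n_p}-a_{n_p})/|b_{n_p}-a_{n_p}|$ converges to a \emph{single} $w\in S^{d-1}$, applies Lemma~\ref{vymena} once to this limiting $w$, and then invokes~\eqref{Clarke} directly along the specific sequence $(a_{n_p},\tau_p)$ with $\tau_p=|b_{n_p}-a_{n_p}|$ to realize the remaining $v_j$'s. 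Since there is only one target simplex $\sigma^*$, Lemma~\ref{otlus}~(iv) gives the contradiction without any uniformity issue.

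Your direct route has a small circularity in the order of quantifiers: the index $i$ produced by Lemma~\ref{vymena}, and hence the target simplex $\sigma^*$, depends on $w=(b-a)/|b-a|$, yet you choose $\epsilon$ (and thus $\delta=\delta_\epsilon/2$) only after knowing $\sigma^*$. The repair is routine---take $\epsilon$ uniform over $w\in S^{d-1}$ and over the $k$ possible index choices, using uniform continuity of $\Theta$ on a compact set of vertex configurations with diameter bounded away from $0$---but this is precisely the bookkeeping that the paper's contradiction argument avoids. Relatedly, your uniform Clarke statement should be formulated for all $j\in\{1,\dots,k\}$ rather than $\{1,\dots,k-1\}$, since the relabeling that puts the dropped index last depends on $w$ and hence on $(a,b)$.
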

	\begin{proof}
	Suppose, to the contrary, that such  $\delta>0$ does not exist. Then we can choose, for each $n \in \N$,
	 points  $a_n, b_n \in B(x, 1/n) \cap A$ such that  $a_n \neq b_n$ and
	\begin{equation}\label{anbn}
	\text{the points}\ \ a_n,b_n\ \ \text{are not}\ \  (A,k, 2^{-(k+2)}k^{-1}\theta)\text{-related}.
	\end{equation}
	Now we can choose subsequences $(a_{n_p})$, $(b_{n_p})$ and $w \in S^{d-1}$ such  that
	$ (b_{n_p}-a_{n_p})/|b_{n_p}-a_{n_p}| \to w \in S^{d-1}$. 
	By Lemma \ref{vymena} there exists  $1\leq j \leq k$ such that
	 $\Theta(\sigma(0,v_1,\dots,v_{j-1}, w, v_{j+1},\dots,v_k)) \geq   2^{-(k+1)}k^{-1}\theta$. Without any loss of generality
	 we can suppose that $j=k$; so we have
	\begin{equation}\label{povym}
	\Theta(\sigma(0,w,v_1,\dots,v_{k-1})) \geq  2^{-(k+1)}k^{-1}\theta.
	\end{equation}
Set  $\tau_p:= |b_{n_p}-a_{n_p}|$, $p\in \N$. Then $\tau_p>0$ and $\tau_p \to 0$.  Since $v_1,\dots v_{k-1}$ are Clarke tangent vectors of $A$ at $x$ by Lemma~\ref{P_PR}~(vi) and  $a_{n_p}\in A \to x$, by  \eqref{Clarke}
	 there exist points $c^j_p \in A,\ 1\leq j \leq k-1,\  p\in \N$ such that  $(c^j_p - a_{n_p})(\tau_p)^{-1} \to v_j,\   1\leq j \leq k-1.$
	 Then 
	$ \sigma(a_{n_p}, b_{n_p},c^1_p,\dots, c^{k-1}_p )$ is similar to
	$\sigma(0,(b_{n_p}-a_{n_p})/\tau_p, (c^1_p-a_{n_p})/\tau_p, \dots ,(c^{k-1}_p-a_{n_p})/\tau_p)  $
	 whose vertices converge to the vertices of $\sigma(0,w,v_1,\dots,v_{k-1})$ with $p\to\infty$. Using \eqref{povym} and
	   Lemma~\ref{otlus}~(iii), (iv)   we obtain that there exists $p \in \N$ 
	 such that  $ \Theta(\sigma(a_{n_p}, b_{n_p},c^1_p,\dots, c^{k-1}_p )) \geq 2^{-(k+2)}k^{-1}\theta$  and  thus the points 
	 $a_{n_p}, b_{n_p}$ are  $(A,k, 2^{-(k+2)}k^{-1}\theta)$-related which contradicts \eqref{anbn}.	
	\end{proof}

	 As an easy consequence of Lemma \ref{omR}  and  Lemma \ref{H}  
	we obtain the following result.

	\begin{proposition}\label{omL}
	Let $d \geq 3$, $2\leq k \leq d-1$ and $\theta$>0. 
	Suppose that $A \subset \R^d$, $0<r< \reach A$, $a \in A$,  and there exist
	 $v_1,\dots, v_k \in \Tan(A,a)$ such that   $\Theta(0,v_1,\dots,v_k) \geq \theta$.
	 Then there exists $\delta>0$ such that the mapping $\psi_k^A$ is   $\tilde L_{k,\theta,r}$ -Lipschitz
	 on  $T_k(A) \cap B(a,\delta)$,
	where  $\tilde L_{k,\theta,r}:= 2^{2(k+2)} k^3 (k!)^{-2} \theta^{-2} r^{-1}$.  
	 \end{proposition}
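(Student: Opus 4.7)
The plan is to combine Lemmas \ref{omR} and \ref{H} essentially verbatim, with only a short normalization step to move the tangent vectors to the unit sphere so that Lemma \ref{omR} applies.

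First, since $\Tan(A,a)$ is a cone by Lemma \ref{P_PR}~(ii) and each $v_i$ is necessarily nonzero (otherwise the fullness would vanish), I would replace every $v_i$ by its unit rescaling $u_i:=v_i/|v_i|\in \Tan(A,a)\cap S^{d-1}$. A brief check is needed to see that the fullness is preserved up to a controlled factor: rescaling uniformly (which does not affect fullness) we may assume $\max_j|v_j|=1$, whence Lemma \ref{otlus}~(ii) forces $|v_i|\ge k!\,\theta$ for every $i$; the volume of $\sigma(0,u_1,\dots,u_k)$ equals that of $\sigma(0,v_1,\dots,v_k)$ divided by $\prod_i|v_i|\le 1$, while the new diameter is at most $2$. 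This yields $\Theta(\sigma(0,u_1,\dots,u_k))\ge \theta_0$ for an explicit $\theta_0=\theta_0(k,\theta)>0$; if the $v_i$ are already unit vectors (which matches the numeric constant in the statement exactly) this step is unnecessary and one simply takes $\theta_0=\theta$.

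Next, Lemma \ref{omR} applied to the unit tangent vectors $u_1,\dots,u_k$ produces $\delta>0$ such that every pair of distinct points $a',b'\in B(a,\delta)\cap A$ is $(A,k,\theta_1)$-related, with $\theta_1:=2^{-(k+2)}k^{-1}\theta_0$. Restricting to $a',b'\in T_k(A)\cap B(a,\delta)$ and invoking Lemma \ref{H} then gives
$$\rho_k\bigl(\psi_k^A(a'),\psi_k^A(b')\bigr)\le \frac{k}{(k!\,\theta_1)^2\, r}\,|b'-a'|,$$
and the case $a'=b'$ is trivial. Substituting $\theta_0=\theta$ into $\theta_1$ reproduces exactly the stated constant $\tilde L_{k,\theta,r}=\frac{2^{2(k+2)}k^3}{(k!)^2\theta^2 r}$.

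The only bookkeeping of substance is tracking the fullness in the normalization $v_i\mapsto v_i/|v_i|$; beyond that the conclusion is direct algebraic substitution into the two previously established lemmas, which is why the result is advertised as an easy consequence of Lemmas \ref{omR} and \ref{H}.
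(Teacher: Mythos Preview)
Your proof is correct and follows essentially the same route as the paper: apply Lemma~\ref{omR} to obtain $\delta$, then feed the resulting $(A,k,2^{-(k+2)}k^{-1}\theta)$-relatedness into Lemma~\ref{H}. The paper's own proof is literally those two sentences. The only difference is your normalization step $v_i\mapsto v_i/|v_i|$: the paper simply writes ``Let $\delta>0$ be as in Lemma~\ref{omR}'' without comment, implicitly treating the $v_i$ as unit vectors (which they are in every application of the proposition in the paper), so the stated constant $\tilde L_{k,\theta,r}$ comes out exactly. Your observation that the general case requires replacing $\theta$ by some $\theta_0(k,\theta)$, and that this affects the constant, is a fair point the paper glosses over.
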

	\begin{proof}
	Let $\delta>0$ be as in Lemma \ref{omR}. Then it is sufficient 
	to apply Lemma \ref{H}  (with  ``$\theta:= 2^{-(k+2)}k^{-1}\theta$'').
	\end{proof}
	
Since  for each  $a \in T_k(A)$ ($k\geq 2$) we can choose linearly independent vectors $v_1,\dots, v_k \in \Tan(A,a)$ and then   $\Theta(\sigma(0,v_1,\dots,v_k)) >0$, the above ``quantitative'' result together with Proposition~\ref{k=1} (for $k=1$) immediately imply the following interesting ``qualitative'' result.
	
\begin{theorem}\label{Tkloklip}
    Let $d \geq 2$ and $1\leq k \leq d-1$.
    Suppose that $A \subset \R^d$ has positive reach and $a\in T_k(A)$.  Then there exists $\delta>0$ such that the mapping $\psi_k^A$ is Lipschitz on  $T_k(A) \cap B(a,\delta)$.
\end{theorem}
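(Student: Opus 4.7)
The plan is to split into two cases: $k=1$, handled directly by Proposition~\ref{k=1}, and $2\leq k\leq d-1$, handled by Proposition~\ref{omL}. In both cases all the hard analytic work has already been done in the preceding lemmas, and the theorem reduces to a short book-keeping argument that extracts a positive ``fullness'' parameter $\theta$ at the given point $a$.

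For $k=1$, I would simply fix any $r$ with $0<r<\reach A$ and invoke Proposition~\ref{k=1}, which asserts that $\psi_1^A$ is $(1/r)$-Lipschitz on \emph{all} of $T_1(A)$; any $\delta>0$ then works.

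For $2\leq k\leq d-1$ the plan is as follows. The hypothesis $a\in T_k(A)$ means $\dim \Tan(A,a)=k$, so I can pick $k$ linearly independent vectors in $\Tan(A,a)$ and normalize them to unit vectors $v_1,\dots,v_k\in\Tan(A,a)\cap S^{d-1}$ (since $\Tan(A,a)$ is a cone, positive scaling stays inside it). The simplex $\sigma:=\sigma(0,v_1,\dots,v_k)$ is then a non-degenerate $k$-simplex in $\R^d$, so by the definition \eqref{ful} of fullness we have
$$\theta := \Theta(\sigma) = \frac{|\sigma|}{(\diam \sigma)^k} > 0.$$
Fixing now any $r$ with $0<r<\reach A$, Proposition~\ref{omL} applied to $A$, $a$, $v_1,\dots,v_k$, $\theta$ and $r$ produces the required $\delta>0$ together with the explicit Lipschitz constant $\tilde L_{k,\theta,r}$ for $\psi_k^A$ on $T_k(A)\cap B(a,\delta)$.

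I do not foresee any real obstacle, since both ingredient propositions are already established; the only substantive observation is that linear independence of tangent vectors at $a$ immediately yields a positive fullness, which is precisely the quantitative datum Proposition~\ref{omL} consumes. It is worth emphasizing that the fullness $\theta$ (and hence $\tilde L_{k,\theta,r}$) depends on the choice of tangent vectors at $a$, and therefore on $a$; consequently this argument only delivers a \emph{local} Lipschitz estimate, in line with the statement. A uniform constant across all of $T_k(A)$ is not to be expected from this method (and, as pointed out elsewhere in the paper, genuinely fails in general).
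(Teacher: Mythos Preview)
Your proposal is correct and matches the paper's own proof essentially verbatim: the paper also splits into the case $k=1$ (handled by Proposition~\ref{k=1}) and $k\geq 2$, where one simply picks linearly independent tangent vectors at $a$, notes that $\Theta(\sigma(0,v_1,\dots,v_k))>0$, and feeds this into Proposition~\ref{omL}. Your additional remarks on normalization and on the inherently local nature of the resulting constant are accurate and consistent with the paper's presentation.
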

    
\begin{remark}
    As an easy consequence we obtain using \cite[Proposition~A48]{Lee} that $\psi_k$ is Lipschitz on any compact subset of $T_k(A)$.
\end{remark} 

However, the following lemma (which is important in the proof of our main result) needs a new additional idea.

\begin{lemma}\label{lipna2}
	Let  $A\subset \R^d$, $0<r< \reach A$,
	$\diam A < r$ and 
	 $A \subset T_1^-(A)  \cup   T_2(A)$. 
	 Then  $\psi_2:=\psi_2^A$  is $(2^{12}\pi/r)$-Lipschitz on $T_2(A)$.
\end{lemma}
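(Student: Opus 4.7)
The plan is to reduce, via Lemma~\ref{H}, to showing that any two distinct points $a,b\in T_2(A)$ are $(A,2,\theta_0)$-related for an absolute constant $\theta_0>0$. Since $L_{2,\theta,r}=\frac{1}{2\theta^2 r}$ in Lemma~\ref{H}, setting $\theta_0:=1/\sqrt{2^{13}\pi}$ gives exactly $L_{2,\theta_0,r}=2^{12}\pi/r$, so it suffices, for each such pair, to produce a witness $z\in A$ with $\Theta(\sigma(a,b,z))\geq\theta_0$.

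To construct $z$, I would apply Lemma~\ref{krivky} (legitimate since $L:=|b-a|\leq\diam A<r<2\,\reach A$), obtaining a $C^{1,1}$ arclength-parametrized curve $\gamma:[0,l]\to A$ from $a$ to $b$ with $l\leq(\pi/2)L$. Let $\ell_{ab}$ be the affine line through $a,b$, and set $D:=\max_{t\in[0,l]}\dist(\gamma(t),\ell_{ab})$. Fix an absolute $\theta_1>0$ with $2\theta_1/\pi^2\geq\theta_0$. In the \emph{bulging case} $D\geq\theta_1 L$, take $z:=\gamma(t^*)$ realizing the maximum: the triangle $\sigma(a,b,z)$ has area at least $\tfrac{1}{2}\theta_1 L^2$ and diameter at most $\max(L,l)\leq(\pi/2)L$, so $\Theta(\sigma(a,b,z))\geq 2\theta_1/\pi^2\geq\theta_0$.

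In the complementary \emph{near-straight case} $D<\theta_1 L$ one must build a witness off the curve. The $C^{1,1}$-regularity of $\gamma$ together with $D<\theta_1 L$ forces $\gamma'(0)\in\Tan(A,a)$ to be close to $(b-a)/L$; since $a\in T_2(A)$, the 2-dimensionality of $\Tan(A,a)$ supplies a unit vector $w\in\Tan(A,a)$ whose component perpendicular to $b-a$ has an absolute lower bound. The tangential regularity of Lemma~\ref{P_PR}(vi) combined with the Clarke characterization~\eqref{Clarke} yields points of $A$ near $a$ tangentially approximating $a+tw$; iterating the strong-convexity midpoint property of Lemma~\ref{str}---while using $A\subset T_1^-(A)\cup T_2(A)$ to control intermediate tangent cones and prevent degenerations---extends these Clarke-type perturbations to a macroscopic $z\in A$ whose perpendicular distance to $\ell_{ab}$ is comparable to $L$, giving $\Theta(\sigma(a,b,z))\geq\theta_0$ as in the bulging case. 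The main obstacle is precisely this amplification: Clarke regularity only produces transverse perturbations at infinitesimal scale, whereas the required fullness $\theta_0$ demands perpendicular offset of order $L$. The hypothesis $A\subset T_1^-(A)\cup T_2(A)$ is decisive here, as it forbids degenerate configurations in which the tangent cone would collapse along the tube; combined with $\diam A<r$ and Lemma~\ref{str}, it permits chaining Clarke-type perturbations into a macroscopic transverse displacement staying inside $A$.
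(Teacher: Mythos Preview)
Your reduction to a uniform $(A,2,\theta_0)$-relatedness is the genuine gap, and it cannot be repaired: the claim that any two points of $T_2(A)$ are $(A,2,\theta_0)$-related for an absolute $\theta_0$ is simply false under the hypotheses of the lemma. Take $A=[0,r/2]\times[-\varepsilon,\varepsilon]\subset\R^2$ with $\varepsilon>0$ small. This set is convex, hence $\reach A=\infty$, and every point lies in $T_2(A)$, so $A\subset T_1^-(A)\cup T_2(A)$ holds trivially; also $\diam A<r$. For $a=(r/4-\delta,0)$, $b=(r/4+\delta,0)$ with $\varepsilon\ll\delta$, every $z\in A$ has perpendicular distance at most $\varepsilon$ from the line through $a,b$, so $\Theta(\sigma(a,b,z))\lesssim\varepsilon/\delta$, which can be made arbitrarily small. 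Thus no universal $\theta_0$ exists, and the ``amplification'' you sketch in the near-straight case is impossible in general. Your own diagnosis of the obstacle (``Clarke regularity only produces transverse perturbations at infinitesimal scale'') is exactly right; the hypotheses do not overcome it.

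The paper circumvents this by not seeking relatedness of the endpoints at all. It uses the same shortest curve $\gamma$ from Lemma~\ref{krivky}, but argues \emph{locally along the curve}: at each interior point $x=\gamma(t)$ the tangent cone contains the full line $\spa\{\gamma'(t)\}$, so $x\notin T_1^-(A)$ and hence $x\in T_2(A)$; a $2$-dimensional convex cone containing a line must contain a closed half-plane, hence two orthogonal unit vectors, giving $\Theta(\sigma(0,v_1,v_2))=1/4$. Proposition~\ref{omL} then yields a uniform \emph{local} Lipschitz constant $L=2^{13}/r$ for $\psi_2$ on a neighbourhood of each such $x$. Thus $\psi_2\circ\gamma$ is locally $L$-Lipschitz on $(0,l)$, hence $L$-Lipschitz there, and by continuity of $\psi_2$ on $T_2(A)$ also on $[0,l]$. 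Since $l\le(\pi/2)|y-z|$, one obtains the global bound $\rho_2(\psi_2(y),\psi_2(z))\le L\cdot l\le (2^{12}\pi/r)|y-z|$. The crucial difference is that the uniformity comes from the \emph{structure of the tangent cone at each curve point}, not from any geometric spread of $A$ near the segment $[a,b]$.
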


\begin{proof}
	Consider two different points  $y,z \in  T_2(A)$. By Lemma \ref{krivky}
					 there exists
	 a curve $\vf: [0,l] \to  A$ parametrized by arclength such that  $\vf(0)=y$,
	 $\vf(l)=z$,  $\vf$ is $C^{1,1}$ smooth on $[0,l]$ and   $l\leq \tfrac{\pi}{2}|z-y|$. Note that  $\vf$ is $1$-Lipschitz. 
	 Now choose an arbitrary  $t \in (0,l)$ and denote $x:= \vf(t)$. 
	Since $\varphi$ is parametrized by arclength and is $C^1$ smooth, we have  $\{\vf'(t),- \vf'(t)\} \subset  \Tan(A,x) \cap  S^{d-1}$, hence 
	  $\Tan(A,x)$ contains a line. Therefore $x\notin T_1^-(A)$, so $x\in T_2(A)$ and 
	$\Tan(A,x)$ contains a closed halfplane. Consequently  $\Tan(A,x)$ contains two orthogonal unit
	 vectors  $v_1$, $v_2$.  An elementary planar calculation now gives
			 $\Theta(\sigma(0,v_1,v_2))= 1/4$. So, applying Proposition \ref{omL} (with $k=2$ and $\theta=1/4$)
			 there exists  $\delta_x>0$ such that  $\psi_2$ is $L$-Lipschitz on  $T_2(A) \cap B(x,\delta_x)$,
			 where  $L:= 2^{13}/r$.  Since  $x\in \vf((0,l))$  was arbitrary, we easily
			 see that  $\psi_2 \circ \vf$
	 is $L$-locally Lipschitz on $(0,l)$.
	It follows (use, e.g. \cite{Fed69}, 2.2.7) that $\psi_2 \circ \vf$ is $L$-Lipschitz on
	 $(0,l)$. Since $\psi_2$ is continuous on $T_2(A)$  by  Lemma~\ref{T_k}~(ii), we obtain that $\psi_2 \circ \vf$ 
		 is continuous on $[0,l]$ and thus  $L$-Lipschitz on
	 $[0,l]$. Consequently
	$$   \rho_2(\psi_2(y),\psi_2(z))= \rho_2(\psi_2(\vf(0)),\psi_2(\vf(l))) \leq L\cdot l
	 \leq  \tfrac{2^{12}\,\pi}{r}|y-z|,$$
which completes the proof.
\end{proof}

\section{Local structure of $T_k(A)$}

Using Theorem~\ref{Tkloklip} and the ``$C^{1,1}$ Whitney theorem'' (Fact~\ref{Wh}), we obtain the following result.

\begin{theorem}\label{tdmnapl2}
	Let $d\geq 2$, $1\leq k \leq d-1$,  $A \subset \R^d$ have positive reach and $a\in T_k(A)$. 
	Then there exists $\delta >0$
	 such that  $T_k(A) \cap B(a, \delta)$ is a subset of a   $k$-dimensional $C^{1,1}$ surface
	 $\Gamma$.
	% Moreover, for each $c \in T_k(A) \cap B(a, \delta)$, we have that $\wtilde \Tan(A,c)$ is the tangent plane
	 %to $\Gamma$ at $c$.
\end{theorem}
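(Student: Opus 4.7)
The plan is to combine the local Lipschitzness of $\psi_k^A$ (Theorem~\ref{Tkloklip}) with the $C^{1,1}$ Whitney extension theorem (Fact~\ref{Wh}) applied in coordinates adapted to the tangent space $\widetilde{\Tan}(A,a)$. Fix $0<r<\reach A$ and apply Theorem~\ref{Tkloklip} to obtain $\delta_0>0$ and $L>0$ so that $\psi_k^A$ is $L$-Lipschitz on $T_k(A)\cap B(a,\delta_0)$. Set $W:=\psi_k^A(a)\in G(d,k)$ and $V:=W^\perp$. By Lemma~\ref{T_k}~(ii), by shrinking $\delta_0$ we may assume that $\rho_k(\psi_k^A(x),W)\leq 1/2$ for every $x\in T_k(A)\cap B(a,\delta_0)$. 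Standard Grassmannian arguments then show that for any such $x$ the subspace $T_x:=\psi_k^A(x)$ is the graph of a unique linear map $L_x\in\cL(W,V)$ with $\|L_x\|\leq 1$, and $\|L_{x_1}-L_{x_2}\|$ is controlled by $\rho_k(T_{x_1},T_{x_2})$.

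Next, for $x\in T_k(A)\cap B(a,\delta_0)$ write $y(x):=\pi_W(x-a)\in W$ and $z(x):=\pi_V(x-a)\in V$. The Federer inequality of Proposition~\ref{fedtan} gives, for any $x_1,x_2\in A$,
\[
\dist(x_2-x_1,\Tan(A,x_1))\leq\frac{|x_2-x_1|^2}{2r},
\]
and since $\Tan(A,x_1)\subset T_{x_1}$, the right-hand side also bounds $\dist(x_2-x_1,T_{x_1})$. Because $T_{x_1}$ is a graph over $W$ of slope at most $1$, this translates into
\[
\bigl|z(x_2)-z(x_1)-L_{x_1}(y(x_2)-y(x_1))\bigr|\leq C|x_2-x_1|^2
\]
and, by triangle inequality, into $|x_2-x_1|\leq C'|y(x_2)-y(x_1)|$ once $\delta$ is chosen small enough. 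Consequently, after shrinking $\delta_0$ to some $\delta>0$, the projection $\pi_W$ is injective and bi-Lipschitz on $T_k(A)\cap B(a,\delta)$, so $D:=\{y(x):x\in T_k(A)\cap B(a,\delta)\}$ is well-defined and we may set $f(y(x)):=z(x)$, $\vf(y(x)):=L_x$.

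It then remains to verify \eqref{Whitney_1} and \eqref{Whitney_2} for this pair $(f,\vf)$. Condition \eqref{Whitney_2} is precisely the quadratic estimate displayed above (with $|x_2-x_1|^2$ replaced by $C''|y_1-y_2|^2$ via the bi-Lipschitz bound). Condition \eqref{Whitney_1} reads $\|L_{x_1}-L_{x_2}\|\leq c|y(x_1)-y(x_2)|$ and follows by chaining the graph-vs-gap estimate $\|L_{x_1}-L_{x_2}\|\lesssim\rho_k(T_{x_1},T_{x_2})$, Lipschitzness of $\psi_k^A$, and the bi-Lipschitz bound $|x_1-x_2|\leq C'|y(x_1)-y(x_2)|$. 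Fact~\ref{Wh} then yields a $C^{1,1}$ extension $F:W\to V$ of $f$, and the graph
\[
\Gamma:=\{a-\pi_W(a)+w+F(w-\pi_W(a)):w\in W\}=a+\{y+F(y):y\in W\}
\]
is a $k$-dimensional $C^{1,1}$ surface in the sense of Definition~\ref{plochy} that contains $T_k(A)\cap B(a,\delta)$.

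The principal technical obstacle is the bookkeeping that links three different scales---the ambient distance $|x_2-x_1|$, the base distance $|y(x_2)-y(x_1)|$, and the gap $\rho_k(T_{x_1},T_{x_2})$---and, above all, establishing the injectivity/bi-Lipschitzness of $\pi_W$ on $T_k(A)\cap B(a,\delta)$ from the combination of Proposition~\ref{fedtan} and the smallness of the slopes $\|L_x\|$. Everything else reduces to routine Grassmannian computations and to plugging the resulting estimates into Fact~\ref{Wh}.
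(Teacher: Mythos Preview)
Your argument is correct, but it follows a genuinely different route from the paper's proof.  You apply Whitney's $C^{1,1}$ extension theorem to a map $f:D\subset W\to V$ (with $W=\psi_k^A(a)$, $V=W^\perp$), after first showing that the orthogonal projection $\pi_W$ is bi-Lipschitz on $T_k(A)\cap B(a,\delta)$ and representing each tangent space $T_x$ as the graph of a linear map $L_x\in\cL(W,V)$.  The paper instead applies the same extension theorem to a map $f:M\subset\R^d\to\R^d$, namely $f(x)=\pi_T(x)$ with ``derivative'' $\vf(x)=\pi_T+\pi_{\psi_k(x)^\perp}$; since $\vf(0)=\id$, it invokes the local $C^{1,1}$ diffeomorphism theorem (Fact~\ref{locdif}) and reads off the surface from the inverse $\Phi^{-1}$ via $G(x)=\pi_{T^\perp}(\Phi^{-1}(x))$.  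Your approach avoids the local diffeomorphism step and is closer in spirit to Lytchak's original argument (cf.\ the discussion in the Introduction), at the cost of the extra bookkeeping you identify---the graph representation of $T_x$ and the injectivity of $\pi_W$.  The paper's approach sidesteps both issues: the Whitney conditions in ambient $\R^d$ follow in one line each from \eqref{vlga} and Proposition~\ref{fedtan}, with no need to compare vertical distances to orthogonal ones or to shrink $\delta$ further for injectivity.
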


\begin{proof}
Without any loss of generality we will suppose that $a=0$.
Choose $0< r  < \reach A$.
By Theorem  \ref{Tkloklip} there exists $\omega>0$ and $L>0$ such that  $\psi_k=\psi_k^A$
		 is $L$-Lipschitz on  $M:= T_k(A) \cap B(0,\omega)$. Denote $T:= \psi_k(0)$ and  
	 define  functions $f:M\to\R^d$ and $\varphi:M\to\cL(\R^d,\R^d)$ as follows:
	 $$ f(x):= \pi_T(x)\ \ \ \text{and}\ \ \ \vf(x):= \pi_T + \pi_{\psi_k(x)^{\perp}},\quad x\in M.$$
    We will apply Fact~\ref{Wh} (Whitney's $C^{1,1}$ extension theorem) to $f$ and $\vf$. For this purpose, we have to verify conditions \eqref{Whitney_1} and \eqref{Whitney_2}.
   Consider arbitrary points $x,y\in M$. We get
    \begin{align*}
        |f(y)-&f(x)-\varphi(x)(y-x)|=|(\pi_{T}(y)-\pi_{T}(x))-\varphi(x)(y-x)|\\
        &=|\pi_{\psi_k(x)^\perp}(y-x)|
        \leq \frac{|y-x|^2}{2r},
    \end{align*}
    where the last estimate follows from Proposition~\ref{fedtan}, since 
    $$|\pi_{\psi_k(x)^\perp}(y-x)|=\dist(y-x,\psi_k(x))\leq \dist(y-x,\Tan(A,x)).$$ 
    Also,
    $$ \|\varphi(y)-\varphi(x)\|= \|\pi_{\psi_k(y)^\perp}-\pi_{\psi_k(x)^\perp}\|
        = \rho_k(\psi_k(x),\psi_k(y))\leq L|y-x|,$$
    where \eqref{vlga} and the $L$-Lipschitzness of $\psi_k$ on $M$ were used, so \eqref{Whitney_1} and \eqref{Whitney_2} follow with $c:=\max\{L,\frac{1}{2r}\}$.
    Now, using Fact~\ref{Wh}, there exists a $C^{1,1}$ mapping $F:\R^d\to\R^d$ such that $F(x)=f(x)$
	and $DF(x)=\varphi(x)$ whenever $x\in M$.
	Note that $F(0)=f(0)=0$.
	  Since $DF(0)=\varphi(0)$ is the identity map, we can apply Fact~\ref{locdif} to $F$ and find  
		$\omega>\ep>0$ such that the restriction  $\Phi|_{B(0,\ep)}$ is a $C^{1,1}$-diffeomorphism. Choose $\ep >\delta>0$
		 such that $B(0,\delta) \subset \Phi(B(0,\ep))$ and set  $G(x):= \pi_{T^{\perp}}(\Phi^{-1} (x))$ for
		 $x \in T \cap B(0,\delta)$. Then $G: T\cap B(0,\delta) \to T^{\perp}$ is a $C^{1,1}$ mapping  which
		  has a $C^{1,1}$ extension  $\tilde G: T \to T^{\perp}$ by Fact~\ref{Wh2}. Set    
			 $\Gamma:= \{x+ \tilde G(x):\ x \in T\}$. Then $\Gamma $ is 
			 a   $k$-dimensional $C^{1,1}$ g-surface    and  $T_k(A) \cap B(a, \delta) \subset \Gamma$.
    	Indeed, consider an arbitrary $z \in T_k(A) \cap B(a, \delta)\subset M$  and denote $x:= \Phi(z) = f(z) = \pi_T(z)$.
			 Then $G(x) =  \pi_{T^{\perp}}(\Phi^{-1} (x)) = \pi_{T^{\perp}}(z)$ and so
			$z = \pi_T(z) +  \pi_{T^{\perp}}(z) =   x+ G(x) \in \Gamma$.
\end{proof}

As a direct consequence we obtain the following corollary which is new up to our knowledge. 
    
\begin{corollary}  \label{count}
    Let $A\subset\R^d$ be a set with positive reach and $1\leq k< d$. Then 
    \begin{enumerate}
        \item[{\rm (i)}] $T_k(A)$ can be covered by countably many $k$-dimensional $C^{1,1}$ surfaces,
        \item[{\rm (ii)}] if $\dim A= k$ then $A$ can be covered by countably many $k$-dimensional $C^{1,1}$ surfaces.
    \end{enumerate}
\end{corollary}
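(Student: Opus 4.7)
The plan is to derive Corollary~\ref{count} as a straightforward consequence of the local statement in Theorem~\ref{tdmnapl2}, plus the Lindel\"of property of $\R^d$ and the decomposition~\eqref{tan_k}. Essentially all the real work is already contained in Theorem~\ref{tdmnapl2}; what remains is a global-to-local bookkeeping argument.

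First I would prove part (i). By Theorem~\ref{tdmnapl2}, for every $a\in T_k(A)$ one can choose $\delta_a>0$ and a $k$-dimensional $C^{1,1}$ surface $\Gamma_a$ such that $T_k(A)\cap B(a,\delta_a)\subset\Gamma_a$. The open family $\{B(a,\delta_a):\,a\in T_k(A)\}$ covers $T_k(A)$, and since $\R^d$ is second countable (hence Lindel\"of), one extracts a countable subcollection $\{B(a_n,\delta_{a_n})\}_{n\in\N}$ still covering $T_k(A)$. Then $T_k(A)\subset\bigcup_n\Gamma_{a_n}$, proving (i).

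For part (ii), assume $\dim A=k$. By Lemma~\ref{P_PR}~(iv) and \eqref{tan_k}, we have
\[
A=T_0(A)\cup T_1(A)\cup\cdots\cup T_k(A).
\]
The set $T_0(A)$ consists of isolated points of $A$ by Lemma~\ref{T_k}~(iv), and since $\R^d$ is separable, $T_0(A)$ is countable; each singleton sits inside a $k$-dimensional affine subspace, which is trivially a $k$-dimensional $C^{1,1}$ surface. For $1\leq j\leq k$, part (i) (applicable because $j\leq k<d$) yields countably many $j$-dimensional $C^{1,1}$ surfaces covering $T_j(A)$.

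The one small point that still needs attention is that when $j<k$ we need to replace a $j$-dimensional surface $\Gamma=\{w+\varphi(w):\,w\in W\}$ (with $W\in G(d,j)$ and $\varphi:W\to W^\perp$ of class $C^{1,1}$) by a $k$-dimensional one containing it. For this, I would pick any $W'\in G(d,k)$ with $W\subset W'$, write $W^\perp=V\oplus(W')^\perp$ with $V\subset W'$, and define $\widetilde\varphi:W'\to(W')^\perp$ by $\widetilde\varphi(w+v):=\pi_{(W')^\perp}(\varphi(w))$ for $w\in W$, $v\in V$; this is $C^{1,1}$ (composition of $\varphi$ with linear maps), and a direct computation shows that every point $w+\varphi(w)$ of $\Gamma$ equals $w'+\widetilde\varphi(w')$ with $w'=w+\pi_V(\varphi(w))\in W'$, so $\Gamma$ is contained in the $k$-dimensional $C^{1,1}$ surface associated with $\widetilde\varphi$. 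Assembling the countable families produced for $j=0,1,\dots,k$ gives the desired countable cover of $A$ by $k$-dimensional $C^{1,1}$ surfaces.

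There is no genuine obstacle here; the only thing to be a bit careful about is the dimensional upgrade in the last paragraph, and the (completely elementary) observation that isolated points of a set in a separable space are countable. Both Lindel\"of-type covering and the explicit extension of $\varphi$ to $\widetilde\varphi$ are routine.
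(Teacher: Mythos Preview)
Your proof is correct and follows essentially the same approach as the paper: Lindel\"of/separability to pass from the local Theorem~\ref{tdmnapl2} to a countable cover for (i), then the decomposition \eqref{tan_k} together with a dimensional upgrade of $j$-surfaces to $k$-surfaces for (ii). The paper merely asserts that each lower-dimensional $C^{1,1}$ surface is ``clearly'' contained in a $k$-dimensional one, whereas you spell out an explicit construction of $\widetilde\varphi$; this is a harmless elaboration, not a different argument.
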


\begin{proof}
    By Theorem~\ref{tdmnapl2} and Definition~\ref{plochy}, we can assign to each $a\in T_k(A)$ a $\delta_a>0$ and a $k$-dimensional $C^{1,1}$ g-surface $\Gamma_a$ such that $T_k(A)\cap B(a,\delta_a)\subset\Gamma_a$. Since $\R^d$ is a separable metric space, there exists a countable set $S\subset T_k(A)$ such that $\bigcup_{a\in T_k(A)}B(a,\delta_a)=\bigcup_{a\in S}B(a,\delta_a)$. Then 
    $$T_k(A)\subset\bigcup_{a\in S}\left( T_k(A)\cap B(a,\delta_a)\right)\subset\bigcup_{a\in S}\Gamma_a.$$
    This proves (i). Further, we obtain (ii) from (i) using \eqref{tan_k} since each $i$-dimensional $C^{1,1}$ g-surface with $1\leq i\leq k$ is clearly contained in a $k$-dimensional $C^{1,1}$ g-surface, and $T_0(A)$ is countable by Lemma~\ref{T_k}~(iv).
\end{proof}

\begin{remark}
    By \cite[Theorem~7.5]{RZ17}, if $\dim A=k$ then $A$ can be locally covered by \emph{finitely} many ``DC surfaces of dimension $k$'' (which are even semiconcave if $k=d-1$, see \cite[Theorem~5.9]{RZ17}). These results, however, do not imply Corollary~\ref{count}~(ii) where $C^{1,1}$ surfaces are used.
\end{remark}

In the case $k=1$ we obtain (using a different method) a stronger version of Theorem~\ref{tdmnapl2} with $\Gamma$ being locally contained in $A$. This will be used in the proof of our main result (Theorem~\ref{T_main}).

\begin{theorem}\label{T1+}
	Let $d\geq 2$,  $A \subset \R^d$ have positive reach and $a\in T_1^+(A)$. Then there exists 
	a  $1$-dimensional $C^{1,1}$ surface
	 $\Gamma$ and
	$\delta >0$
	 such that 
	\begin{equation}\label{intj}
	 T_1(A)\cap B(a, \delta) \subset \Gamma \cap B(a, \delta) \subset  A.
	\end{equation}
\end{theorem}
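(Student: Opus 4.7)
I normalize so that $a = 0$ and $\Tan(A, 0) = \spa\{e_1\}$, fix $r \in (0, \reach A)$ (localizing via Lemma~\ref{local}), and write $\pi$ for the orthogonal projection onto $\spa\{e_1\}$. The key recurring tool is Corollary~\ref{uhlovy} applied at $0$: since $\Tan(A, 0)$ is one-dimensional, every $p \in A \cap B(0, r) \setminus \{0\}$ satisfies $|p - \pi(p)| \leq |\pi(p)|/\sqrt{3}$, and in particular $\pi(p) = 0$ forces $p = 0$.

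The first step is to construct a $C^{1,1}$ arc in $A$ passing through $0$. Since $a \in T_1^+(A)$ gives $\pm e_1 \in \Tan(A, 0)$, I pick $b^\pm \in A$ with $|b^\pm|$ small and $\langle b^\pm, e_1\rangle$ of opposite signs. Lemma~\ref{krivky} provides a $C^{1,1}$ arclength-parametrized arc $\gamma \colon [0, l] \to A$ from $b^-$ to $b^+$. The continuous function $t \mapsto \langle \gamma(t), e_1\rangle$ changes sign, so the intermediate value theorem together with the cone estimate yields some $t^*$ with $\gamma(t^*) = 0$. Because $\gamma'(t^*) \in \Tan(A, 0) \cap S^{d-1} = \{\pm e_1\}$, Fact~\ref{locdif} applied to $t \mapsto \langle\gamma(t), e_1\rangle$ makes a neighbourhood of $t^*$ on $\gamma$ into a $C^{1,1}$ graph $\{s e_1 + h_0(s e_1) : |s| < \sigma_0\}$ over $\spa\{e_1\}$ contained in $A$.

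The second step is to verify that $\pi$ is injective on $T_1(A) \cap B(0, \delta)$ for $\delta$ small enough. If $c_1 \neq c_2 \in T_1(A)$ had $\pi(c_1) = \pi(c_2)$, then $c_2 - c_1 \perp e_1$; writing a unit spanner of $\Tan(A, c_1)$ as $v_{c_1} = \cos\alpha \, e_1 + \sin\alpha \, w$ with $w \in e_1^\perp$ unit, orthogonality forces $|\langle c_2 - c_1, v_{c_1}\rangle| \leq \sin\alpha \cdot |c_2 - c_1|$, while Corollary~\ref{uhlovy} at $c_1$ gives $|\langle c_2 - c_1, v_{c_1}\rangle| \geq (\sqrt{3}/2)|c_2 - c_1|$, so $\sin\alpha \geq \sqrt{3}/2$. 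But Proposition~\ref{k=1} forces $\sin\alpha = \rho_1(\spa\{v_{c_1}\}, \spa\{e_1\}) \leq |c_1|/r$, contradicting $|c_1| < \delta$ once $\delta < r\sqrt{3}/2$. Thus each $s \in \pi(T_1(A) \cap B(0, \delta))$ is achieved by a unique $c(s) \in T_1(A)$. Now Theorem~\ref{tdmnapl2} with $k = 1$ furnishes a $C^{1,1}$ curve $\Gamma_0 \supset T_1(A) \cap B(0, \delta_0)$, which since $\psi_1(0) = \spa\{e_1\}$ is a graph $\{s e_1 + g(s e_1)\}$. Setting $S := \pi(T_1(A) \cap \overline{B(0, \delta)}) \subset (-\sigma, \sigma)$ for $\delta$ small, I write the complement as countably many open intervals $(a_i, b_i)$ with $p_i := c(a_i), q_i := c(b_i) \in T_1(A)$, and use Lemma~\ref{krivky} to connect $p_i$ and $q_i$ by a $C^{1,1}$ arc in $A$. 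The tangent of this arc at $p_i$ lies in the one-dimensional $\Tan(A, p_i)$ and hence coincides with the tangent of $\Gamma_0$ at $p_i$; for $\delta$ small, Proposition~\ref{k=1} makes these tangents close to $e_1$, so the arc reparametrizes as a $C^{1,1}$ graph $s \mapsto s e_1 + h_i(s)$ over $[a_i, b_i]$ whose value and first derivative match $g$ at the endpoints. Glueing $g|_S$ with the $h_i$ defines $h \colon (-\sigma, \sigma) \to e_1^\perp$, and I set $\Gamma := \{s e_1 + h(s e_1)\}$.

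The principal obstacle I anticipate is verifying that the glued $h$ is genuinely $C^{1,1}$ with a \emph{uniform} Lipschitz bound on $h'$ across a (possibly countably infinite) intricate system of gaps. First-order matching at every gap endpoint is automatic from the common tangent space $\Tan(A, p_i)$, but the uniform Lipschitz bound has to combine the Whitney-type estimate underlying Theorem~\ref{tdmnapl2} on $S$ with a $C^{1,1}$ bound on each filling arc depending only on $r$ (and not on $i$), and to absorb the reparametrization from arclength to $\pi$-parameter. It is this uniform compatibility across the two kinds of pieces that forms the technical heart of the proof; once it is secured, $\Gamma \subset A$ holds by construction and $T_1(A) \cap B(0, \delta) \subset \Gamma$ holds because $c(s) = s e_1 + h(s e_1)$ for each $s \in S$.
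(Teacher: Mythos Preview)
Your first step---using Lemma~\ref{krivky} to produce a $C^{1,1}$ arc through $a$ and then reparametrizing it as a graph over $\spa\{e_1\}$ via Fact~\ref{locdif}---is exactly what the paper does. The divergence comes in the second step, where you take a substantially longer route than necessary.

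The paper does not glue anything. Instead it strengthens your injectivity statement: rather than showing merely that $\pi$ is injective on $T_1(A)\cap B(0,\delta)$, it shows that for each $z\in T_1(D)$ (where $D=A\cap\overline B(0,\omega)$) the \emph{entire fiber} $\pi^{-1}(\{\pi(z)\})\cap D$ reduces to $\{z\}$. The argument is the same angle-chasing you already carry out, just applied with the second point $\tilde z$ allowed to be any point of $D$, not only a point of $T_1(D)$: Corollary~\ref{uhlovy} at $z$ forces $\angle(\tilde z-z,\pm e_1)<\pi/3$, which rules out $\tilde z-z\perp e_1$. Once you have this, the first inclusion in~\eqref{intj} is immediate: if $z\in T_1(A)\cap B(0,\delta)$, the point $z':=\pi(z)+h_0(\pi(z))$ on your step-one arc lies in $D$ with $\pi(z')=\pi(z)$, so $z'=z$ and $z$ lies on the arc. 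Thus the single arc from your first step already serves as $\Gamma$.

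Your glueing construction is not wrong in principle, and the tangent-matching at the gap endpoints is correct (the surface from Theorem~\ref{tdmnapl2} does have tangent $\psi_1^A(p_i)$ at each $p_i\in T_1(A)$). But the ``principal obstacle'' you rightly flag---controlling the $C^{1,1}$ norm uniformly across possibly infinitely many filling arcs and their reparametrizations---is real work that the paper's argument sidesteps entirely. The moral: once you have the arc through $a$ inside $A$, check whether it already captures $T_1(A)$ before building anything more elaborate.
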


\begin{proof}
    Without any loss of generality we will suppose that $a=0$  and  $e_1 \in \Tan(A,0)$.
    Denote  $W:= \spa{e_1}$ and  $\pi:= \pi_W$.
		
		Fix  $0<r< \reach A$. Now choose  $0< \omega < r/2$  and put  $D:=  A \cap  \overline B(0,\omega)$.
		 By Lemma~\ref{ball}  we have  $r < \reach D$.
		
   Using  Corollary~\ref{uhlovy}  (with $A:=D$, $a:=0$, $b:=z$), we obtain that 
    \begin{equation}\label{vnule}
    \angle(z, e_1) < \pi/12\ \ \text{ or}\ \  \angle(z,- e_1) < \pi/12,\ \ \text{ whenever}\ \ 
		z\in D\setminus \{0\}.
    \end{equation}
     Now consider an arbitrary $z \in T_1(D) \setminus \{0\}$. By  Corollary  \ref{uhlovy}
		(used with $A:=D$, $a:=z$, $b:=0$)
		we obtain that there 
    exists  $v \in \Tan(D,z)$ with  $\angle(-z,v) < \pi/12$ and therefore \eqref{vnule}  gives that either
    $\angle(v,e_1) < \pi/6$ or  $\angle(v,-e_1) < \pi/6$. 
    Now, if $\tilde z \in D \setminus \{z\}$,  Corollary  \ref{uhlovy} (used with $A:=D$, $a:= z$, $b:= \tilde z$) gives that either 
     $\angle(\tilde z - z, v) < \pi/6$ or  $\angle(\tilde z - z,- v) < \pi/6$ and
    therefore   either 
    $\angle(\tilde z - z, e_1) < \pi/3$ or  $\angle(\tilde z - z,- e_1) < \pi/3$. Consequently we obtain 
    \begin{equation}\label{prnad}
     \pi^{-1}(\{\pi(z)\}) \cap D = \{z\}\ \ \text{for each}\ \ z \in T_1(D).
    \end{equation} 
    Since $\Tan(D,0)= W$, we can choose points  $z_1, z_2  \in D$ such that  $x_1:= \pi (z_1) = t_1 e_1$ with  $t_1<0$ and  $x_2:= \pi (z_2) = t_2 e_1$ 
    with  $t_2>0$. Since  $|z_1-z_2|< 2 r$,  Lemma~\ref{krivky} implies that there exists a $C^{1,1}$ curve $\gamma:[0,l]\to D$ parametrized by arclength with $\gamma(0)=z_1$ and $\gamma(l)=z_2$. 
    Using  \eqref{prnad} and connectivity of $\gamma$ we easily obtain that
 	$0=\gamma(t)$ for some $t\in (0,l)$.
    Using that $|\gamma'(t)|=1$, $\Tan(D,0)= W$ and $\gamma([0,l])\subset D$ we infer  $\gamma'(t)=\pm e_1$. 
		Further we will use the standard identification $W=\R$.
		We apply now Fact~\ref{locdif} to the real $C^{1,1}$ function $G:=\pi\circ\gamma|_{(0,l)}: (0,l)\to W=\R$ (with $G'(t)=\pm 1$) and obtain that for some open interval $U\subset \R$ containing $t$, $G|_U:U\to\R$ is a $C^{1,1}$-diffeomorphism. 
		Obviously,
		$$ \pi\left(\gamma\left ( \left (G|_U\right)^{-1}(w)\right)\right)=
		G\left( \left (G|_U\right)^{-1}(w)\right)=w,\ \ \
		 w\in G(U) \subset W,$$
		and so
		$$ \varphi(w):=  \gamma\left( \left(G|_U\right)^{-1}(w)\right) - w \in W^\perp, \quad w\in G(U)\subset W,$$
		  $\varphi: G(U)\subset W \to W^\perp$ is   $C^{1,1}$ smooth  and
		$$\graph\varphi:=\{w+\varphi(w):\, w\in G(U)\}=  \{\gamma\left(\left(G|_U\right)^{-1}(w)\right):\ w\in G(U)\}=\gamma(U).$$
		 Choose $0<\delta<\omega$ such that $(-\delta,\delta)\subset G(U)$. Then, by Fact~\ref{Wh2}, $\varphi|_{(-\delta,\delta)}:(-\delta,\delta)\to W^\perp$ has an $C^{1,1}$ extension $\tilde{\varphi}:W\to W^\perp$, $\Gamma:=\{w+\tilde{\varphi}(w):\, w\in W\}$  is a $1$-dimensional $C^{1,1}$ g-surface and we have 
       $$\Gamma\cap B(0,\delta) \subset \graph\varphi = \gamma(U)\subset D\subset A,$$
		which proves the second inclusion of \eqref{intj}.
    Further, if $z\in T_1(D)\cap B(0,\delta)$ then $\pi(z)\in(-\delta,\delta)$ and, hence,
    $$z':=\pi(z)+\varphi(\pi(z))\in \graph\varphi\cap \Gamma = \gamma(U)\cap\Gamma\subset D\cap\Gamma,$$
    hence $z'=z$ by \eqref{prnad}, which implies $z\in\Gamma$. This proves 
    $$T_1(D)\cap B(0,\delta)\subset\Gamma\cap B(0,\delta).$$
    Since $A\cap B(0,\delta) = D\cap B(0,\delta)$ and the notion of the tangent cone is local, we have  $T_1(A)\cap B(0,\delta)=T_1(D)\cap B(0,\delta)$. So the first inclusion of \eqref{intj} follows and the proof is complete.
\end{proof}

\section{Local structure of a $2$-dimensional set $A$ at points of $T_1(A)$}

In this section we prove our main result, Theorem~\ref{T_main}. As the proof is rather long, we divide the presentation into several subsections.

\subsection{Description of the strategy and some auxiliary notions} \label{ss51}

The difficulty of Theorem~\ref{T_main} consists in the proof of its first part (the proof of the second part is easy). It is advantageous to reformulate and prove the first part using an equivalence relation $\approx$ from the following definition.

\begin{definition}  \label{sim}
     Let $A,B\subset\R^d$,  $a\in A$ and $b \in B$. We write
     \begin{enumerate}
         \item[(i)] $A\sim_a B$ if $a\in A\cap B$ and $A\cap B(a,\ep)=B\cap B(a,\ep)$ for some $\ep>0$,
         \item[(ii)] $(A,a)\approx (B,b)$ if there exist an open set $U\ni a$ and a $C^{1,1}$-diffeo\-mor\-phism $\Phi:U\to\R^d$  such that $\Phi(a)=b$ and $\Phi(A\cap U) =B\cap \Phi(U)$.
     \end{enumerate}
\end{definition}

\begin{remark}\label{oekv1}
    \begin{enumerate}
        \item[(i)]  Clearly   $A\sim_a B$ implies $(A,a)\approx (B,a)$.  Further,
				 $(A,a)\approx (B,b)$ if and only there exist an open set $U\ni a$ and a $C^{1,1}$-diffeo\-mor\-phism $\Phi:U\to\R^d$  such that $\Phi(a)=b$ and $\Phi(A\cap U) \sim_b B$. The ``only if part'' is almost obvious.
				To prove the  ``if part'', suppose that $U$ and $\Phi$ as above are given and choose $\ep >0$
				 such that  $\Phi(A\cap U) \cap B(b,\ep) = B \cap B(b,\ep)$. Then, setting 
				$\tilde U:= \Phi^{-1}( B(b,\ep))$ and $\tilde \Phi:= \Phi |_{\tilde U}$, it is easy to check
				 that   $\tilde \Phi(A\cap \tilde U) =B\cap \tilde \Phi(\tilde U)$.
				
				\item[(ii)] 
			In the usual ``germ terminology'',  $A\sim_a B$  means that the set germs $[A]_a$ and $[B]_a$ are equal
				and  $(A,a)\approx (B,b)$ means that the set germs $[A]_a$ and $[B]_b$ are ``$C^{1,1}$-equivalent''.
		
        \item[(iii)] If $A$, $B$, $a$, $b$, $U$ and $\Phi$ are as in Definition \ref{sim} (ii) and $\tilde U$ is open with $a\in \tilde U \subset U$, then clearly     $\Phi(A\cap \tilde{U}) =B\cap \Phi(\tilde{U})$. This easily shows (cf.\ Remark~\ref{stability}~(a)) that  we can
 additionally postulate  in Definition \ref{sim} (ii) that $\Phi$ is bi-Lipschitz and $U$ is a ball.
\item[(iv)] 
		Not only $\sim$ but also $\approx$ is an equivalence relation. To prove symmetry, suppose
		that $(A,a)\approx (B,b)$ and  $U$ and $\Phi$ are as in Definition \ref{sim} (ii). Then
			 $\Phi^{-1} (B\cap \Phi(U)) = A\cap \Phi^{-1}(\Phi(U))$ shows that $(B,b) \approx (A,a)$.
			To prove transitivity, suppose that $(A,a)\approx (B,b)$ and $(B,b) \approx (C,c)$. Choose
			  $C^{1,1}$-diffeo\-mor\-phisms $\Phi:U\to\R^d$ and $\Psi:V\to\R^d$   such that $\Phi(a)=b$, 
				 $\Phi(A\cap U) =B\cap \Phi(U)$,  $\Psi(b)=c$ and $\Psi(B\cap V) =C\cap \Psi(V)$.
				 Putting  $\tilde U:= U \cap \Phi^{-1}(V)$ and  $\Omega:= \Psi \circ \Phi |_{\tilde U}$, we 
		easily check that  $\Omega(\tilde U \cap A) = C \cap \Omega(\tilde U)$ and $\Omega$ is a $C^1$-diffeomorphism such that $\Omega$ and $\Omega^{-1}$ are locally $C^{1,1}$ (see Remark~\ref{stability}~(c)).
        Choosing $W:=B(a,\delta)$ with $\delta>0$ sufficiently small, we obtain that the restriction $\Omega|_W$ is a $C^{1,1}$-diffeomorphism and $\Omega(A\cap W)=C\cap\Omega(W)$, hence,
         $(A,a)\approx (C,c)$ follows.
		\end{enumerate}
\end{remark}

We observe that the following proposition is a reformulation of the first part of Theorem~\ref{T_main}. Recall that multirotations were defined in Definition~\ref{multrot}.

\begin{proposition}  \label{T}
    Let $A\subset\R^d$ ($d\geq 3$) have positive reach, $\dim A\leq 2$ and $a\in T_1(A)$. Then there exists a $\B$-set  $B\subset\R^2\subset\R^d$ and a multirotation $\rho$ associated with $T_1(B)$ such that $(A,a)\approx(\rho(B),0)$.
\end{proposition}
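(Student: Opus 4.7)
The plan is to flatten $A$ in two stages: first straighten the $1$-dimensional skeleton $T_1(A)$ onto the $x_1$-axis, then use a product-integrated rotation field combined with Whitney's $C^{1,1}$ extension theorem to flatten the $2$-dimensional flaps of $A$ into a single plane.

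After translating, I assume $a=0$ and, after an orthogonal rotation using Lemma~\ref{P_PR}(ii), $\Tan(A,0)\subset\spa\{e_1\}$. In the case $0\in T_1^+(A)$, Theorem~\ref{T1+} yields a $C^{1,1}$ arc $\Gamma\subset A$ with $T_1(A)\cap B(0,\delta)\subset\Gamma$; in the case $0\in T_1^-(A)$, Theorem~\ref{tdmnapl2} does the same for a $\Gamma$ not necessarily contained in $A$. Using Fact~\ref{locdif} and Lemma~\ref{reach_C11} I would straighten $\Gamma$ onto the $x_1$-axis, so that, via Remark~\ref{oekv1}, we may assume $C:=T_1(A)\cap(-r_0,r_0)\subset\spa\{e_1\}$. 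This $C$ is compact by Lemma~\ref{T_k}(iii); let $\Lambda$ be the family of components of $(-r_0,r_0)\setminus C$.

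Next, shrinking to a small ball $B(0,\rho)$ and discarding $T_1^+(A)\setminus C$ (which lies on $\Gamma$ and can be absorbed separately), I would apply Lemma~\ref{lipna2} to conclude that $\psi_2:=\psi_2^A$ is Lipschitz on $T_2(A)\cap B(0,\rho)$. Combining lower semicontinuity of $\Tan(A,\cdot)$ (Lemma~\ref{P_PR}(vii)) and Clarke tangential regularity (Lemma~\ref{P_PR}(vi)) with the fact that $e_1\in\Tan(A,y)$ for $y\in C$, I would show that $e_1\in\psi_2(x)$ for every $x\in T_2(A)\cap B(0,\rho)$, and hence write $\psi_2(x)=\spa\{e_1,n(x)\}$ with a Lipschitz unit vector $n(x)\in e_1^\perp$. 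Then product integration \cite{GJ90} on each $\overline\lambda$ would furnish a $C^{1,1}$ family of $2$-rotations $R(x_1)\in SO(d)$, equal to the identity on $C$, taking $\spa\{e_1,n(x)\}$ back onto the fixed plane $\spa\{e_1,e_2\}$. Setting $f(x):=R(\pi(x))^{-1}x$ and $\varphi(x):=R(\pi(x))^{-1}\pi_{\psi_2(x)}+\pi_{\psi_2(x)^\perp}$ on $D:=(T_2(A)\cup C)\cap B(0,\rho)$, the Whitney--Glaeser inequalities \eqref{Whitney_1} and \eqref{Whitney_2} follow from the Lipschitzness of $\psi_2$, $R$ and from Proposition~\ref{fedtan}; Fact~\ref{Wh} together with Fact~\ref{locdif} then produces a local $C^{1,1}$-diffeomorphism $F$.

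Finally I would identify $F(A)$ with a $\B$-set. Its boundary near $0$ consists of two Lipschitz graphs $\psi\leq 0\leq\varphi$ on an $x_1$-interval with $\psi(0)=\varphi(0)=0$ and vanishing one-sided derivatives at $0$; Lemma~\ref{Fu_local} converts positive reach of the two enclosing half-sets into semiconcavity of $\varphi$ and semiconvexity of $\psi$, so $B:=F(A\cap B(0,\rho))$ is a $\B$-set. Taking $\rho_A$ to be the multirotation determined by $(R|_\lambda^{-1})_{\lambda\in\Lambda}$ and $\Phi:=F^{-1}$ yields $\Phi(\rho_A(B))\sim_0 A$, hence $(A,0)\approx(\rho_A(B),0)$. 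The step I expect to be the main obstacle is the product-integration construction of $R$ followed by the Whitney extension: because $C$ may be Cantor-like in $\spa\{e_1\}$, the Lipschitz and quadratic-Taylor estimates for $f$ and $\varphi$ must be verified \emph{uniformly across all components} of $\Lambda$, not just on a single flap; this uniform control, together with the correct compatibility of $F$ with the multirotation structure, is the genuinely hard technical part of the argument.
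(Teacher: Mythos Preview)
Your broad three-stage strategy matches the paper's: straighten $T_1(A)$ onto the $x_1$-axis (Proposition~\ref{T1}), flatten the two-dimensional leaves by a $C^{1,1}$-diffeomorphism built from a product integral and Whitney extension (Proposition~\ref{T2}), then identify the result with a multirotated $\B$-set (Proposition~\ref{T3}). You also correctly anticipate that the uniform Whitney estimates across a Cantor-like $C$ are the delicate part.

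However, the middle step as you outline it has a genuine gap. The claim ``$e_1\in\psi_2(x)$ for every $x\in T_2(A)\cap B(0,\rho)$'' is false: lower semicontinuity of $\Tan(A,\cdot)$ gives only that $\psi_2(x)$ is \emph{close} to containing $e_1$ when $x$ is close to $C$, not that it actually contains $e_1$. Consequently you cannot write $\psi_2(x)=\spa\{e_1,n(x)\}$ with a well-defined (let alone Lipschitz) unit $n(x)\in e_1^\perp$. More fundamentally, a rotation field $R(x_1)$ depending only on $\pi(x)$ cannot send $\psi_2(x)$ to a fixed plane, since $\psi_2$ varies two-dimensionally within each leaf. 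And if you instead let $R$ depend on the full $x$ and impose $R\equiv I$ on $C$, you force a discontinuity at any $y\in C$ adjacent to two leaves whose tangent-plane limits $\psi^\lambda(a_\lambda)$, $\psi^\beta(b_\beta)$ differ.

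The paper resolves this by dropping rotations in the flattening step. On each leaf it sets $f(x)=\mu_{a_\lambda}\circ\pi_{T_\lambda}(x)$, a \emph{projection} onto the fixed plane $T_\lambda=\psi^\lambda(a_\lambda)\supset W$, composed with a product-integrated linear map $\mu_{a_\lambda}$ that is \emph{not} the identity on $C$ but rather absorbs exactly the jumps $\pi_{\psi^\lambda(a_\lambda)}-\pi_{\psi^\lambda(b_\lambda)}$ between consecutive leaves. This yields a skewered set whose leaves are planar but lie in \emph{different} planes $V_\lambda$; only afterwards does a genuine multirotation (constant on each leaf, hence elementary) align the $V_\lambda$ with $\R^2$. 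The multirotation in the statement comes from this final cosmetic step, not from the product integral.

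Your treatment of the case $a\in T_1^-(A)$ is also incomplete: Theorem~\ref{tdmnapl2} gives a $\Gamma$ not contained in $A$, so after straightening you no longer have $[c,d]\subset A$ and the set is not skewered. The paper instead appends a short segment to $A$ to manufacture a $T_1^+$ point, runs the $T_1^+$ argument on $A^*:=A\cup S$, and then excises the image of the segment to pass from a $\B_+$-set to a $\B_-$-set.
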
	

To prove this observation, it is clearly sufficient to prove that, for an {\it arbitrary} 	$A\subset\R^d$ ($d\geq 3$),
 the following properties are  equivalent:

\begin{enumerate}
\item[(i)]   There exist a $\B$-set $B_1\subset\R^2\subset\R^d$, a multirotation $\rho_1:\R^d\to\R^d$ associated with $T_1(B_1)$ and a $C^{1,1}$-diffeomorphism $\Phi:U\to\R^d$, where $U\subset\R^d$, $0\in U$, $a=\Phi(0)$, such that $\rho_1(B_1)\subset U$ and $\Phi(\rho_1(B_1))$ is a neighbourhood of $a$ in $A$. 

\item[(ii)]  There exists a $\B$-set  $B_2\subset\R^2\subset\R^d$ and a multirotation $\rho_2$ associated with $T_1(B_2)$ such that $(A,a)\approx(\rho_2(B_2),0)$.
\end{enumerate}
		
First, suppose that (i) holds. We have clearly  $\Phi(\rho_1(B_1)) \sim_a A$; thus 
$	(\rho_1(B_1),0) \approx	(A,a)$ by Remark \ref{oekv1} (i). Consequently  $(A,a) \approx		(\rho_1(B_1),0) $ 
 and so (ii) holds (with $B_2:= B_1$ and $\rho_2:= \rho_1$).   		
				
Second,   suppose that (ii) holds. Then $(\rho_2(B_2),0)  \approx (A,a)$ and so, by Remark~\ref{oekv1}~(iii),  there exist
$\delta>0$ and a  $C^{1,1}$-diffeomorphism $\Phi: B(0,\delta)\to\R^d$ such that  $\Phi(0)=a$ and
$$ \Phi(\rho_2(B_2) \cap B(0,\delta)) = A \cap \Phi(B(0,\delta)).$$
The definition of $B$-sets easily implies
 that there exists $\omega > 0$ such that  
$$  B_1:= \pi^{-1}([-\omega, \omega]) \cap B_2 \subset  B(0,\delta)$$
and consequently also $\rho_2(B_1) \subset B(0,\delta)$ (note that $|\rho_2(x)|=|x|$ for any $x\in\R^d$, by the definition of a multirotation). Obviously, $B_1$ is a $B$-set
 and it is easy to see that there exists a multirotation $\rho_1$ associated with $T_1(B_1)$ 
 such that $\rho_2(B_1) =  \rho_1(B_1)$ (cf.\ Lemma~\ref{mrot} below). Since  $\Phi(\rho_1(B_1))$ is clearly a neighbourhood of $a$ in $A$,
 (i) follows.

\begin{lemma}  \label{mrot}
    Let $C,C'\subset W:=\spa\{e_1\}=\R\subset\R^d$ ($d\geq 3$) be nonempty compact sets such that $C\cap[p,q]=C'\cap[p,q]$ for some given $p<q$. Let $\rho$ be a multirotation associated with $C$. Then there exists a multirotation $\rho'$ associated with $C'$ such that $\rho|_{\pi^{-1}[p,q]}=\rho'|_{\pi^{-1}[p,q]}$.
\end{lemma}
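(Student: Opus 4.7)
The plan is to construct $\rho'$ by specifying, for each component $\lambda' \in \Lambda'$ of $W\setminus C'$, an appropriate 2-rotation $R'_{\lambda'}$, and then letting $\rho'$ be the multirotation associated with $C'$ determined by $(R'_{\lambda'})_{\lambda'\in\Lambda'}$. The key observation is that since $C\cap[p,q] = C'\cap[p,q]$, we have $(p,q)\setminus C = (p,q)\setminus C'$, and the connected components of this common open subset of $(p,q)$ are precisely the sets $\lambda \cap (p,q)$ for those $\lambda\in\Lambda$ meeting $(p,q)$, and also precisely the sets $\lambda' \cap (p,q)$ for those $\lambda'\in\Lambda'$ meeting $(p,q)$. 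This yields a canonical bijection between the components of $W\setminus C$ meeting $(p,q)$ and the components of $W\setminus C'$ meeting $(p,q)$: $\lambda\leftrightarrow\lambda'$ iff $\lambda\cap\lambda'\cap(p,q)\neq\emptyset$ iff $\lambda\cap(p,q)=\lambda'\cap(p,q)$.

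With this bijection in hand, I will define $R'_{\lambda'} := R_\lambda$ whenever $\lambda'$ meets $(p,q)$ and corresponds to $\lambda\in\Lambda$, and I will set $R'_{\lambda'} := \operatorname{id}$ for the remaining $\lambda'\in\Lambda'$ (these are the components lying entirely in $(-\infty,p)$ or $(q,+\infty)$, as a nonempty open interval meeting neither $(p,q)$ nor containing $p$ or $q$). The resulting $\rho'$ is by construction a multirotation associated with $C'$.

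To verify that $\rho|_{\pi^{-1}[p,q]} = \rho'|_{\pi^{-1}[p,q]}$, I will fix $x\in\pi^{-1}[p,q]$, write $t:=\pi(x)$, and split into cases. If $t\in C$ (equivalently $t\in C'$), both maps fix $x$. Otherwise $t$ lies in some $\lambda\in\Lambda$ and some $\lambda'\in\Lambda'$, and it suffices to show that $\lambda'$ meets $(p,q)$ and corresponds to $\lambda$ under the bijection above. For $t\in(p,q)$ this is immediate since $t\in\lambda\cap\lambda'\cap(p,q)$. For the endpoint cases $t\in\{p,q\}$ with $t\notin C$, the key point is that $\lambda$ and $\lambda'$ are both open neighborhoods of $t$ in $W$, so each contains an interval of the form $(t-\delta,t+\delta)$; this forces both to meet $(p,q)$, and any common point in $(p,q)$ then witnesses that they correspond. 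Hence $R_\lambda = R'_{\lambda'}$ in every nontrivial case, so $\rho(x)=\rho'(x)$. The only mild obstacle is this endpoint analysis; everything else is bookkeeping about the components of $W\setminus C$ and $W\setminus C'$ and the obvious fact that two components of a fixed open subset of $\R$ either coincide or are disjoint.
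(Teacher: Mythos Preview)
Your proof is correct and follows essentially the same approach as the paper's: both construct $\rho'$ by setting $R'_{\lambda'}:=R_\lambda$ when $\lambda'$ corresponds to a component $\lambda$ of $W\setminus C$ and $R'_{\lambda'}:=\id$ otherwise. The only cosmetic difference is that the paper phrases the correspondence via $\lambda\cap[p,q]=\lambda'\cap[p,q]$, whereas you use $\lambda\cap(p,q)=\lambda'\cap(p,q)$ together with an explicit endpoint check; these are equivalent, and your treatment of the endpoints $t\in\{p,q\}$ is a little more detailed than the paper's ``it is easy to verify''.
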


\begin{proof}
    Let $\rho$ be determined by $2$-rotations ($R_\lambda$), where $\lambda$ are components of $W\setminus C$. We define $\rho'$ as the multirotation determined by $2$-rotations $(R'_{\lambda'})$, where $\lambda'$ are components of $W\setminus C'$. We set $R'_{\lambda'}:=\id$ if $\lambda'\cap[p,q]=\emptyset$ and $R'_{\lambda'}:=R_\lambda$ if $\lambda'\cap[p,q]\neq\emptyset$, where $\lambda$ is the (clearly unique) component of $W\setminus C$ such that $\lambda\cap[p,q]=\lambda'\cap[p,q]$. It is easy to verify that $\rho'$ has the desired properties.
\end{proof}

\smallskip

To describe how we prove Proposition \ref{T} we need also the terminology introduced in Definition \ref{skew}.

\begin{notation}  \label{notation}
Throughout the section we will use the usual identifications $\R^d = \R \times \R^{d-1}$ and $\R=\R \times \{0\}=:W \subset \R^d$. Further, we denote  $\pi: = \pi_W: \R^d \to W$.
\end{notation}

\begin{definition}  \label{skew}
    We will say that a set $A\subset\R^d$ ($d\geq 2$) with positive reach is \emph{skewered on a segment} $[p,q]\subset\R$ if
    \begin{equation}\label{roz}
     \emptyset \neq T_1(A) \subset [p,q] \subset A  \subset \pi^{-1}([p,q]).
    \end{equation} 
    If $\lambda$ is a component of $\R\setminus T_1(A)$ with $\lambda\cap [p,q]\neq\emptyset$, we denote $A_\lambda:=A\cap\pi^{-1}(\lambda)$
    and call $A_\lambda$ a \emph{leave of $A$}. (Recall that $T_1(A)$ is closed by Lemma~\ref{T_k}~(iii).) 
    
    We say that $A\subset\R^d$ is a \emph{set skewered on $[p,q]$ with planar leaves} if $A$ is skewered on $[p,q]$ and each leave $A_\lambda$ of $A$ lies in some plane $V_\lambda\in G(d,2)$.
\end{definition} 

\begin{remark}   \label{Rem_listky}
    \begin{enumerate}
        \item[(a)] Note that each leave $A_\lambda$ is nonempty since it contains $\lambda\cap[p,q]$, and that $A_\lambda\cap T_1(A)=\emptyset$.
        \item[(b)] Any $\B$-set $B\subset\R^2\subset\R^d$ is skewered (on $[0,r]$ or $[-r,r]$ for some $r>0$), both in $\R^2$ and in $\R^d$. Indeed, $B$ has positive reach in $\R^2$ by Lemma~\ref{B_set_PR}, hence also in $\R^d$ by Remark~\ref{rem_reach}, and it is easy to see that $0\in T_1(B)$. Moreover, if $\varphi,\psi$ are the functions from Definition~\ref{B-sets}, we have
        $$T_1(B)\subset \{(x,0):\, \varphi(x)=\psi(x)\}\subset W,$$
        since for any point $(x,y)\in B$ with $\psi(x)<\varphi(x)$ it follows easily that $\dim\Tan(A,(x,y))=2$ from the Lipschitzness of $\varphi$ and $\psi$.

        (Conversely, using the convexity of tangent cones of $B$, it can be easily shown that any point $(x,0)\in B$ with $\varphi(x)=\psi(x)$ and $|x|\neq r$ belongs to $T_1(B)$.)
        
        \item[(c)] The set $M$ from \eqref{M} is a more sophisticated example of a skewered set in $\R^3$  with planar leaves. The system of leaves can be empty, finite or infinite here (as well as in the case of $\B$-sets).
        \end{enumerate}
\end{remark}

The proof of Proposition~\ref{T} for the case $a\in T_1^+(A)$ is divided into the proofs of the following three (mutually independent) propositions:

\begin{proposition}  \label{T1}
    Let $A\subset\R^d$ ($d\geq 3$) have positive reach, $\dim A\leq 2$ and $a\in T_1^+(A)$. Then there exists a set $S\subset\R^d$ with positive reach and $\dim S\leq 2$  skewered on a segment $[c,d]$, $c<0<d$, such that $0\in T_1(S)$ and $(A,a)\approx(S,0)$.
\end{proposition}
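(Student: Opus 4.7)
Without loss of generality assume $a=0$ and $\Tan(A,0)=W:=\spa\{e_1\}$. The strategy is to straighten the $C^{1,1}$ curve $\Gamma$ produced by Theorem~\ref{T1+} onto the $x_1$-axis by a global, $\pi$-preserving $C^{1,1}$-diffeomorphism $\Phi$, and then to cut the image $\Phi(C)$ of a small compact piece $C$ of $A$ with a thin slab $\pi^{-1}([c,d])$.

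More precisely, by Theorem~\ref{T1+} choose $\delta_1>0$ and a $1$-dimensional $C^{1,1}$ surface $\Gamma$ with $T_1(A)\cap B(0,\delta_1)\subset\Gamma\cap B(0,\delta_1)\subset A$. Since $\Gamma\subset A$ locally and $\Tan(A,0)=W$, $\Gamma$ is tangent to $W$ at $0$ and near $0$ is the graph of a $C^{1,1}$ map $h_0\colon I\subset W\to W^\perp$ with $h_0(0)=0$, $Dh_0(0)=0$; I extend $h_0$ to a global $C^{1,1}$ map $h\colon W\to W^\perp$ via Fact~\ref{Wh2} and set
\[\Phi(x_1,x'):=(x_1,x'-h(x_1)).\]
Then $\Phi$ is a global $C^{1,1}$-diffeomorphism (its inverse is $(y_1,y')\mapsto(y_1,y'+h(y_1))$), preserves $\pi$, satisfies $D\Phi(0)=\id$, and sends the graph of $h$ onto $W$. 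By Lemma~\ref{local} pick $\delta_0\in(0,\min(\delta_1,\reach A))$ so that $C:=A\cap\overline B(0,\delta_0)$ has positive reach; by Lemma~\ref{reach_C11} the set $\Phi(C)$ has positive reach, and Lemma~\ref{tkdif}(iii) combined with Theorem~\ref{T1+} yields $T_1(\Phi(C))\cap\Phi(B(0,\delta_0))\subset\Phi(\Gamma)\subset W$. For $c<0<d$ with $|c|,|d|$ small (quantified in the next paragraph) define
\[S:=\Phi(C)\cap\pi^{-1}([c,d]).\]
The easy properties follow directly: $\dim S\le 2$; $S\subset\pi^{-1}([c,d])$ by construction; $[c,d]\subset S$ since $\Phi^{-1}([c,d]\times\{0\})=\{(t,h(t)):t\in[c,d]\}\subset\Gamma\cap C$; $\Tan(S,0)=\Tan(\Phi(C),0)=D\Phi(0)(W)=W$ is $1$-dimensional as $0\in\INt\pi^{-1}([c,d])$, so $0\in T_1(S)$; for $y\in S$ with $\pi(y)\in(c,d)$ we have $\Tan(S,y)=\Tan(\Phi(C),y)$, so if $y\in T_1(S)$ then $y\in T_1(\Phi(C))\cap W\cap(c,d)\subset[c,d]$; and $(A,0)\approx(S,0)$ is realised by $\Phi|_U$ on $U:=B(0,\delta_4)$ with $\delta_4$ small enough that $U\subset\overline B(0,\delta_0)$ and $\Phi(U)\subset\pi^{-1}((c,d))$, for then $S\cap\Phi(U)=\Phi(C)\cap\Phi(U)=\Phi(C\cap U)=\Phi(A\cap U)$.

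\textbf{Main obstacle.} The delicate step is verifying $\reach S>0$ via Lemma~\ref{prunik}(ii) while simultaneously ruling out $T_1(S)$-points on the caps $\pi^{-1}(\{c,d\})$ that lie off $W$. Both reduce to the same transversality: at each $y\in\Phi(C)\cap\pi^{-1}(c)$ the cone $\Tan(\Phi(C),y)$ must meet $\INt\Tan(\pi^{-1}([c,d]),y)=\{v:v_1>0\}$, and symmetrically $\{v_1<0\}$ at the $d$-cap. I would establish this from two independent ingredients. First, Proposition~\ref{fedtan} applied at $0$ with $\Tan(A,0)=W$ yields $|x'|\le|x|^2/(2\reach A)$ for every $x=(c,x')\in C\cap\pi^{-1}(c)$; the resulting quadratic inequality $|x'|^2-2\reach A\cdot|x'|+c^2\ge0$, together with $|x'|\le\delta_0<\reach A$ ruling out the far root, forces $|x'|\le c^2/\reach A$, so $|\Phi^{-1}(y)|=O(|c|)$ uniformly on $\Phi(C)\cap\pi^{-1}(c)$. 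Second, since $a\in T_1^+(A)$ we have $\pm e_1\in\Tan(A,0)$, and lower semicontinuity of $\Tan$ (Lemma~\ref{P_PR}(vii)) supplies $\delta_3>0$ such that for every $x\in A\cap B(0,\delta_3)$ the cone $\Tan(A,x)$ contains vectors $v^\pm$ with $v^+_1>1/2$ and $v^-_1<-1/2$. Shrinking $|c|,|d|$ so that this $O(|c|)$-bound on $|\Phi^{-1}(y)|$ is below $\delta_3$ and that all these preimages sit strictly inside $\overline B(0,\delta_0)$, I transport $v^\pm$ by $D\Phi(\Phi^{-1}(y))$ (which is close to $\id$ there) into vectors of $\Tan(\Phi(C),y)$ whose first coordinates still have both signs, obtaining the required transversality. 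Lemma~\ref{prunik}(ii) then gives $\reach S>0$; moreover, the same transversality shows that if $\dim\Tan(\Phi(C),y)=2$ at a cap point then $\dim\Tan(S,y)=2$, so any $y\in T_1(S)\cap\pi^{-1}(\{c,d\})$ must satisfy $y\in T_1(\Phi(C))\subset W$, hence $y\in\{(c,0),(d,0)\}\subset[c,d]$, completing the verification.
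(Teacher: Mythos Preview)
Your argument is correct, but you work considerably harder than necessary.  The paper takes exactly the same first two steps (reduce to $a=0$, $\Tan(A,0)=W$; invoke Theorem~\ref{T1+} to get $\Gamma$; straighten $\Gamma$ onto $W$ by the global shift $\Phi(z)=z-\varphi(\pi(z))$), but then diverges: instead of cutting $\Phi(E)$ by the slab $\pi^{-1}([c,d])$, it simply sets
\[
S:=\Phi(E)\cap\overline B(0,\omega),\qquad [c,d]:=[-\omega,\omega],
\]
for $0<\omega<\reach\Phi(E)$ with $\overline B(0,\omega)\subset\Phi(B(0,\delta))$.  Positive reach of $S$ is then free from Lemma~\ref{ball}, and Lemma~\ref{neprib} gives $T_k(S)=T_k(\Phi(E))\cap S$ directly, so the skewering conditions $[c,d]\subset S\subset\pi^{-1}([c,d])$ and $T_1(S)\subset[c,d]$ follow in a few lines from \eqref{inkluze2}.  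Your slab intersection forces you to run the transversality machinery of Lemma~\ref{prunik} at the caps (via lower semicontinuity of $\Tan$ and the Federer estimate) and then to argue separately that cap points cannot contribute to $T_1(S)$; none of this is needed with the ball.

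One small slip: you write $T_1(\Phi(C))\cap\Phi(B(0,\delta_0))\subset\Phi(\Gamma)\subset W$, but $\Phi$ sends only $\operatorname{graph}h$ to $W$, and $\Gamma=\operatorname{graph}h_0$ only on the interval $I$ where you defined $h_0$.  You must also arrange $\delta_0$ small enough that $\Gamma\cap\overline B(0,\delta_0)\subset\operatorname{graph}h_0$; this is routine but should be said.
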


The main ingredient of the proof of Proposition~\ref{T1} is Theorem~\ref{T1+}.

\begin{proposition}  \label{T2}
    Let $S\subset\R^d$ ($d\geq 3$) with positive reach and $\dim S\leq 2$ be skewered on $[c,d]$, $c<0<d$, and $0\in T_1(S)$. Then there exists a set $\tilde{S}\subset\R^d$ with positive reach skewered on $[\tilde{c},\tilde{d}]$, $\tilde{c}<0<\tilde{d}$, with planar leaves and such that $0\in T_1(\tilde{S})$ and $(S,0)\approx(\tilde{S},0)$.
\end{proposition}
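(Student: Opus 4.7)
The plan is to build a $C^{1,1}$-diffeomorphism $\Phi$ on a neighbourhood $U$ of $0$ that fixes the skewer $[c,d]\subset W$ pointwise and maps each leaf $S_\lambda$ into a $2$-plane $V_\lambda\supset\spa\{e_1\}$. Granting such $\Phi$, the image $\tilde S := \Phi(S\cap U)$ will be skewered on some $[\tilde c,\tilde d]$ with planar leaves, have positive reach (by Lemma~\ref{reach_C11} combined with Lemma~\ref{ball}), and satisfy $0\in T_1(\tilde S)$ (by Lemma~\ref{tkdif}(iii)), yielding $(S,0)\approx(\tilde S,0)$ as desired.

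The first step is to control the geometry of each leaf. After shrinking $[c,d]$ so that $\diam S < r < \reach S$, I would check via Lemma~\ref{prunik}(ii) that each leaf closure $\overline{S_\lambda} = S\cap\pi^{-1}([p_\lambda,q_\lambda])$ has positive reach -- the tangential compatibility at each endpoint holds since $\pm e_1\in\Tan(S,(p_\lambda,0))$ lies in the interior of the tangent half-space of the convex slab $\pi^{-1}([p_\lambda,q_\lambda])$. Because $\overline{S_\lambda}\setminus\{(p_\lambda,0),(q_\lambda,0)\}\subset T_2(S)$ and the two endpoints lie in $T_1^-(\overline{S_\lambda})$, Lemma~\ref{lipna2} yields Lipschitzness of $\psi_2^{\overline{S_\lambda}}$ on $T_2(\overline{S_\lambda})$ with a constant depending only on $r$. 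This Lipschitz tangent-plane field is the basic analytic input of the construction.

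The main obstacle is producing $\Phi$ as a globally $C^{1,1}$ mapping from this leafwise Lipschitz data. On a single leaf the problem is essentially covered by Theorem~L (or more explicitly by a variant of the Whitney-extension construction in the proof of Theorem~\ref{tdmnapl2}), yielding a local $C^{1,1}$-diffeomorphism that flattens the leaf into the chosen plane $V_\lambda$. The delicate point is to synthesize these local flattenings into a single $C^{1,1}$-diffeomorphism across the possibly Cantor-like set $T_1(S)$. For this I would invoke the product-integration machinery of \cite{GJ90}: on each $\lambda$ the straightening is realized as the product integral of a Lipschitz matrix-valued angular velocity built from the tangent-plane field, and by choosing these angular velocities to vanish with compatible quadratic rates at the endpoints of $\lambda$, the Whitney--Glaeser conditions of Fact~\ref{Wh} are satisfied on $T_1(S)\cup\bigcup_\lambda\overline\lambda$, producing a $C^{1,1}$ mapping on a neighbourhood of $0$. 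The fact that the Lipschitz constant from Lemma~\ref{lipna2} is uniform in $\lambda$ is the key quantitative input enabling this synthesis.

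Finally, Fact~\ref{locdif} upgrades this $C^{1,1}$ mapping to a local $C^{1,1}$-diffeomorphism $\Phi$ on a smaller neighbourhood $U$ of $0$ (with $D\Phi(0)$ automatically invertible by the construction). By design, $\Phi$ fixes the skewer pointwise and satisfies $\Phi(S_\lambda)\subset V_\lambda$ for every component $\lambda$; consequently $\tilde S := \Phi(S\cap U)$ has the desired skewered-with-planar-leaves structure and $(S,0)\approx(\tilde S,0)$, completing the proof.
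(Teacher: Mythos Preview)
Your outline is correct and follows essentially the same route as the paper: shrink to $\diam S<r$, apply Lemma~\ref{lipna2} to each leaf closure to get a uniform Lipschitz bound on $\psi_2$, use the product-integration machinery of \cite{GJ90} together with Fact~\ref{Wh} to produce a $C^{1,1}$ extension, and invoke Fact~\ref{locdif} to obtain the local diffeomorphism. One point of imprecision worth noting: in the paper the product integral is not taken of a ``Lipschitz angular velocity'' within each leaf; on a single leaf the straightening map is simply the linear map $\mu_{a_\lambda}\circ\pi_{T_\lambda}$ (projection onto the limiting tangent plane at the left endpoint, post-composed with the accumulated product integral), while the interval function $\alpha$ fed into the product integral is a pure-jump function recording the differences $\pi_{\psi^\lambda(a_\lambda)}-\pi_{\psi^\lambda(b_\lambda)}$ across leaves---this is precisely what makes the derivative field $\varphi$ match consistently at both endpoints of each $\lambda$ and across $T_1(S)$.
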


The proof of this proposition is the most difficult part of the proof of Theorem~\ref{T_main}. It uses the $C^{1,1}$ Whitney theorem and some facts of the theory of product integration that will be explained before the proof.

\begin{proposition}  \label{T3}
    Let $S\subset\R^d$ ($d\geq 3$) with positive reach be a set skewered on $[c,d]$, $c<0<d$, with planar leaves, and $0\in T_1(S)$. Then there exists a $\B_+$-set $B\subset\R^2\subset\R^d$ and a multirotation $\rho$ associated with $T_1(B)$ such that $S\sim_0\rho(B)$.
\end{proposition}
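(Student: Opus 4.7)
The plan is to unroll $S$ near $0$ into a planar set in $\R^2$, one leaf at a time, and recognize the result as a $\B_+$-set. First, I would check that each plane $V_\lambda$ containing a leaf $S_\lambda$ contains $\spa\{e_1\}$: since $\lambda\cap[c,d]\subset[c,d]\subset S$ and $\lambda\cap[c,d]\subset\pi^{-1}(\lambda)$, the segment $\lambda\cap[c,d]$ lies in $S_\lambda\subset V_\lambda$, which forces $V_\lambda=\spa\{e_1,w_\lambda\}$ for some unit $w_\lambda\in e_1^\perp$. I then pick a 2-rotation $R_\lambda\in SO(d)$ fixing $\spa\{e_1\}$ pointwise and satisfying $R_\lambda(e_2)=w_\lambda$, so $R_\lambda$ is an isometry mapping $\R^2=\spa\{e_1,e_2\}$ onto $V_\lambda$. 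Setting $B_\lambda:=R_\lambda^{-1}(S_\lambda)\subset\R^2$ gives a planar set whose reach is positive at each of its points (via Lemma~\ref{local} applied to $S_\lambda$, since $\pi^{-1}(\lambda)$ is open, together with isometric invariance of the reach).

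Fix $r>0$ sufficiently small (say $r<\tfrac 1{10}\reach S$), set $T:=T_1(S)\cap[-r,r]$ (compact, containing $0$), and let $\Lambda$ be the set of components of $(-r,r)\setminus T$. By the 2D local structure recalled before Theorem~\ref{T_main} (following from \cite[Theorem~6.4]{RZ17}), each $B_\lambda$ has the form $\{(x,y):x\in\lambda,\,\psi_\lambda(x)\leq y\leq\varphi_\lambda(x)\}$ for Lipschitz functions $\psi_\lambda\leq 0\leq\varphi_\lambda$ with $\varphi_\lambda$ semiconcave and $\psi_\lambda$ semiconvex on $\lambda$. Define $\varphi,\psi:[-r,r]\to\R$ by $\varphi=\psi=0$ on $T$ and $\varphi=\varphi_\lambda$, $\psi=\psi_\lambda$ on each $\lambda\in\Lambda$. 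The main work is to verify: (i) $\varphi,\psi$ are Lipschitz on $[-r,r]$ with a uniform constant; (ii) $\varphi$ is semiconcave and $\psi$ semiconvex on $(-r,r)$; (iii) $\varphi(0)=\psi(0)=0$ and the two-sided derivatives at $0$ vanish.

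Property (iii) is immediate: Corollary~\ref{uhlovy} applied at $0\in T_1(S)$ (where $\Tan(S,0)=\spa\{e_1\}$) forces $|\varphi(x)|+|\psi(x)|=O(x^2)$ near $0$, so $\varphi(0)=\psi(0)=0$ and $\varphi'(0)=\psi'(0)=0$. For (i), the Lipschitzness of $\varphi_\lambda$ on each $\lambda$ is uniform (depending only on $\reach S$) by the 2D theory, while Corollary~\ref{uhlovy} applied at each $t\in T$ (using $\Tan(S,t)=\spa\{e_1\}$) yields a quadratic decay $|\varphi(x)|+|\psi(x)|\leq C(x-t)^2$ on both sides of $t$; combining these gives a global Lipschitz bound on $[-r,r]$.

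The most delicate point is (ii): semi-properties do not automatically glue across leaves (linear bumps on adjacent leaves can destroy semiconcavity). My plan is to deduce the semiconcavity of $\varphi$ from the positive reach of $\hyp\varphi\subset\R^2$ via Lemma~\ref{Fu_local} (together with Remark~\ref{RemFu}). To show $\hyp\varphi$ has positive reach, I would verify Federer's characterization from Proposition~\ref{fedtan}~(ii) at each boundary point: within a single leaf this follows from the planar positive reach of $B_\lambda$ itself, while for pairs of points $a,b\in\hyp\varphi$ sitting over different components (or with one projection in $T$) it follows from the quadratic decay obtained in (i), which gives the required estimate $\dist(b-a,\Tan(\hyp\varphi,a))\leq|b-a|^2/(2r')$ for a uniform $r'>0$. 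An analogous argument gives semiconvexity of $\psi$. With (i)--(iii) established, $B:=\{(x,y):x\in[-r,r],\,\psi(x)\leq y\leq\varphi(x)\}$ is a $\B_+$-set by Definition~\ref{B-sets}(ii), and Remark~\ref{Rem_listky}(b) identifies $T_1(B)\cap(-r,r)=T$. Finally, I define $\rho$ as the multirotation associated with $T_1(B)$ determined by $(R_\lambda)_{\lambda\in\Lambda}$, extended to components of $\R\setminus T_1(B)$ outside $(-r,r)$ by the identity via Lemma~\ref{mrot}. By construction, $\rho(B)\cap B(0,\ep)=S\cap B(0,\ep)$ for all sufficiently small $\ep>0$, giving $(S,0)\sim(\rho(B),0)$.
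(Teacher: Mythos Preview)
Your overall strategy---rotate each planar leaf into $\R^2$ via a 2-rotation, assemble the result, and recognize it as a $\B_+$-set---is exactly the paper's route. The paper, however, packages the argument more efficiently, and the comparison is instructive.

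Rather than defining the functions $\varphi,\psi$ leaf by leaf and then verifying the $\B_+$-set axioms for the glued object, the paper first restricts to $S^*:=S\cap\overline B(0,\omega)$ with $\diam S^*<r<\reach S^*$, defines the multirotation $\tilde\rho$ exactly as you do, and proves in one stroke that the entire image $B':=\tilde\rho(S^*)\subset\R^2$ has positive reach (Lemma~\ref{prorot}). The positive-reach verification there uses the \emph{midpoint} characterization (Lemma~\ref{str}), which needs no knowledge of tangent cones: the same two-case split you sketch (both points in one leaf vs.\ separated by a point of $T_1$) gives the required midpoint estimate directly from the quadratic decay $|x-\pi(x)|\le F|\pi(x)-z|^2$. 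Once $B'$ is known to have positive reach in $\R^2$ and to be skewered with $0\in T_1(B')$, a single application of the 2D classification at the point $0$ (Lemma~\ref{skew:B}, which wraps \cite[Theorem~6.4]{RZ17}) produces the $\B_+$-set $B$ with $(B',0)\sim(B,0)$; the functions $\varphi,\psi$ and all their properties come for free.

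Your route forces you to rebuild that 2D classification by hand. Two steps are thinner than you suggest. First, the assertion that each $B_\lambda$ is globally of the form $\{\psi_\lambda\le y\le\varphi_\lambda\}$ with $\varphi_\lambda$ semiconcave and $\psi_\lambda$ semiconvex on the whole interval $\lambda$ does not follow from \cite[Theorem~6.4]{RZ17}, which is a local statement at a single point; you would need a separate argument (e.g.\ via Lemma~\ref{Fu_local} applied to $\overline{B_\lambda}$ at each boundary point, together with uniformity of constants). Second, and more seriously, your plan to show $\reach(\hyp\varphi)>0$ via Proposition~\ref{fedtan}~(ii) is awkward: that criterion requires the tangent cones of $\hyp\varphi$, which you do not yet control at points of $T$ before knowing $\varphi$ is semiconcave there. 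You also write ``within a single leaf this follows from the planar positive reach of $B_\lambda$'', but $B_\lambda$ is the region between two graphs, not $\hyp\varphi_\lambda$, and passing from one to the other is exactly the content you are trying to prove. The paper sidesteps both issues by proving positive reach of $B'$ (not of $\hyp\varphi$) via Lemma~\ref{str}, and then invoking the 2D result once.

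In short: your proof can be made to work, but the cleanest repair is to replace your steps (i)--(ii) by the argument of Lemma~\ref{prorot} (midpoint criterion applied to $\tilde\rho(S^*)$) and then quote Lemma~\ref{skew:B}, which is precisely what the paper does.
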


Using the transitivity of relation $\approx$, we obtain Proposition~\ref{T} in the case $a\in T_1^+(A)$ as an immediate consequence of Propositions~\ref{T1}, \ref{T2} and \ref{T3}. The case $a\in T_1^-(A)$ is then inferred from the case $a\in T_1^+(A)$ in Subsection~\ref{subs_last}.

\subsection{Proof of Proposition~\ref{T1}}

The main step of the proof is an application of Theorem~\ref{tdmnapl2}. We will also need the following easy lemma.

\begin{lemma}  \label{neprib}
    Let $A\subset \R^d$, $0<\tau < r < \reach A$, $b\in A$ and $A^*:= A \cap \overline B(b, \tau)$. Then $\reach A^*>r$ and $T_k(A^*) = T_k(A) \cap A^*$, $k=0,\dots,d$.
\end{lemma}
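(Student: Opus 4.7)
For the reach statement, I would invoke Lemma~\ref{ball} with $A$ and the closed ball $\overline{B}(b,\tau)$: since $\tau\leq \reach A$, that lemma yields $\reach A^* \geq \reach A > r$.

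For the identity $T_k(A^*) = T_k(A)\cap A^*$, my plan is to fix $x \in A^*$ and show that $\dim\Tan(A^*,x) = \dim\Tan(A,x)$. When $|x-b|<\tau$, the sets $A$ and $A^*$ agree on an open neighbourhood of $x$, so the tangent cones literally coincide and there is nothing to check. The nontrivial case is $|x-b|=\tau$, where intersecting $A$ with the boundary sphere of $\overline{B}(b,\tau)$ could a priori kill tangent directions.

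To handle the boundary case, I would produce, via Corollary~\ref{uhlovy} applied with $a:=x$ (and the original $b$, noting $0<|b-x|=\tau<r$), a nonzero $w \in \Tan(A,x)$ with $\angle(b-x,w) < \pi/6$; in particular $\langle w, b-x\rangle > 0$. For an arbitrary $v \in \Tan(A,x)$ and any $\ep>0$, convexity of $\Tan(A,x)$ (Lemma~\ref{P_PR}~(ii)) gives $u_\ep := w + \ep v \in \Tan(A,x)$, and for $\ep$ small enough $\langle u_\ep, b-x\rangle > 0$ still holds. Taking any sequence $a_n \in A\setminus\{x\}$ with $a_n\to x$ and $(a_n-x)/|a_n-x| \to u_\ep/|u_\ep|$, the expansion
\[
|a_n-b|^2 = \tau^2 - 2|a_n-x|\,\frac{\langle u_\ep, b-x\rangle}{|u_\ep|} + O(|a_n-x|^2)
\]
shows $a_n\in \overline{B}(b,\tau)$, hence $a_n\in A^*$, for all large $n$. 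Therefore $u_\ep \in \Tan(A^*,x)$; the same argument with $v=0$ yields $w \in \Tan(A^*,x)$, and so $v = (u_\ep - w)/\ep \in \spa\Tan(A^*,x)$. Since $v$ was arbitrary and the inclusion $\Tan(A^*,x)\subset\Tan(A,x)$ is trivial, I obtain $\spa\Tan(A^*,x) = \spa\Tan(A,x)$, which gives the desired dimension equality.

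The main obstacle is this boundary case; the key point is that Corollary~\ref{uhlovy} guarantees a tangent vector of $A$ at $x$ pointing strictly into the ball $\overline{B}(b,\tau)$, which—combined with the convexity of the tangent cone—lets me perturb any other tangent vector into the open ball and thereby recover it inside the span of $\Tan(A^*,x)$.
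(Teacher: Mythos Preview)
Your proof is correct and follows essentially the same route as the paper: both invoke Lemma~\ref{ball} for the reach bound, and for the boundary case $|x-b|=\tau$ both use Corollary~\ref{uhlovy} to produce a tangent vector of $A$ at $x$ pointing strictly into the ball and then exploit the convexity of $\Tan(A,x)$ to recover the full dimension inside $\Tan(A^*,x)$ (the paper does this via the relative interior of the cone meeting the open half-space, you via the perturbation $u_\ep=w+\ep v$). One cosmetic slip: the remainder in your expansion of $|a_n-b|^2$ is only $o(|a_n-x|)$, not $O(|a_n-x|^2)$, since the directional error $(a_n-x)/|a_n-x|-u_\ep/|u_\ep|$ is merely $o(1)$; this does not affect the conclusion that $|a_n-b|<\tau$ for large $n$.
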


\begin{proof} The first assertion follows from Lemma~\ref{ball} (applied with an $r^*\in(r,\reach A)$).
    For the second assertion it is sufficient to prove that
    \begin{equation}  \label{samedim}
        \dim\Tan(A,a)=\dim\Tan(A^*,a),\quad a\in A^*.
    \end{equation}
    If $a\in B(b,\tau)$ then  \eqref{samedim} is obvious. Assume that $a\in\partial B(b,\tau)$ and denote $k:=\dim\Tan(A,a)$ and $H:=\{x\in\R^d:\,\langle x,b-a\rangle>0\}$. 
    By Corollary~\ref{uhlovy} there exists $v\in \Tan(A,a)\cap S^{d-1}$ such that $\angle(v, b-a) < \pi/6$, hence $v\in H$ and thus $\Tan(A,a)\cap H\neq\emptyset$. Then also the relative interior of $\Tan(A,a)$ meets $H$ by \cite[Corollary~6.3.2]{RT70}, hence $\Tan(A,a)\cap H$ contains a $k$-dimensional ball.    
    It follows easily from the definition of the tangent cone that $\Tan(A,a)\cap H\subset\Tan(A^*,a)$, hence $\dim\Tan(A^*,a)\geq k$. As the other inequality is obvious, we have $\dim\Tan(A^*,a)=k$ and the proof is finished.
\end{proof}
    
\begin{proof}[Proof of Proposition~\ref{T1}]  \vskip 2mm
Choose  $0<r < \reach A$ and  an affine isometry $\Omega: \R^d \to \R^d$ such that
$\Omega(a)=0$ and $\Tan(\Omega(A),0)=W$.
Clearly, $E:=\Omega(A)$ fulfills  $r< \reach E$ and $\dim E \leq 2$. 

Applying Theorem~\ref{T1+} we find that there exists  a $1$-dimensional $C^{1,1}$ surface $\Gamma$ and $\delta>0$ such that
\begin{equation}\label{inkluze2}
  T_1(E) \cap B(0,\delta) \subset \Gamma \cap B(0,\delta) \subset E.
	\end{equation}
Moreover, inspecting the proof of Theorem~\ref{T1+}, we easily see that we can choose
$$  \Gamma= \{ w+ \vf(w):\ w\in W\}$$
with a suitable $C^{1,1}$ mapping $\vf:W \to W^\perp$.
 Now define the mapping
\begin{equation}\label{Phi}
	\Phi(z) = z - \vf(\pi(z)),\ \ \ z \in \R^d.
\end{equation}
It is easy to see that $\Phi: \R^d \to \R^d$ is  a $C^{1,1}$ mapping which is even  $C^{1,1}$-diffeomorphism onto $\R^d$, since $\Phi^{-1}(z) = z + \vf(\pi(z))$, $z\in\R^d$.
Clearly  $\Phi(\Gamma)= W$, $\Phi^{-1}(W)=\Gamma$, $\Phi(0)=0$ and $\reach(\Phi(E))>0$ by Lemma~\ref{reach_C11}.
Further, choose $0<\omega<\reach\Phi(E)$ such that 
\begin{equation}\label{omga}
 \overline B(0,\omega) \subset \Phi(B(0,\delta)),
\end{equation}
and set $S:= \Phi(E)\cap\overline{B}(0,\omega)$ and $[c,d]:=[-\omega,\omega]$. We have $\reach S>0$ by Lemma~\ref{ball}.
We will show that 
\begin{equation}\label{vlm}
S\  \text{ is skewered on the segment }\  [c,d].
\end{equation}
To this end, first note that obviously $S \subset \pi^{-1}([c,d])$ and we have $0 \in T_1(S)$ by Lemma~\ref{tkdif}~(iii).
Further, consider an arbitrary $w\in [c,d] \subset W$ and set $z:= \Phi^{-1}(w) \in \Gamma$.
Then \eqref{omga} implies $z \in B(0,\delta)$ and so $z \in E$ by \eqref{inkluze2}. Since
$w= \Phi(z)$ we have $w \in S$. Thus $[c,d] \subset S$.
Finally, let  $x\in T_1(S)$ be given.  Using Lemma~\ref{neprib} and Lemma~\ref{tkdif}~(iii) we obtain
$x\in T_1(\Phi(E)) = \Phi(T_1(E))$. So $x= \Phi(z)$ for some $z\in T_1(E)$. Since $z= \Phi^{-1}(x)$ and $x\in S \subset \overline B(0,\omega)$, we obtain $z\in B(0,\delta)$ by \eqref{omga}.
Thus $z\in \Gamma$ by \eqref{inkluze2} and consequently  $x= \Phi(z)\in W \cap \overline B(0,\omega) =  [c,d]$. This completes the proof of \eqref{vlm}.
		
Further, by our construction, we have
$$ (A,a) \approx (\Omega(A),0)  \approx (E,0) \approx (\Phi(E),0) \approx (S,0),$$
hence $(A,a) \approx  (S,0)$, and recall that $0\in T_1(S)$. Thus the proof is complete.
\end{proof}

\subsection{Some properties of skewered sets with positive reach}

The following two lemmas will be used in the proof of the most difficult Proposition~\ref{T2}. Recall that Notation~\ref{notation} is used.

\begin{lemma}  \label{skew-prop}
    Let $A\subset\R^d$ ($d\geq 2$) be a set with positive reach with $\dim A\leq 2$ skewered on a segment $[p,q]\subset\R=W$, $0<r<\reach A$ and $x_0\in T_1(A)$. Then the following statements hold.
    \begin{enumerate}
        \item[{\rm (i)}] If $\diam A<r$ then $T_0(A)=\emptyset$.
        \item[{\rm (ii)}] If $0<\rho<r$ then $A\cap\overline{B}(x_0,\rho)$ is a set with positive reach of dimension at most $2$ with $\reach(A\cap\overline{B}(x_0,\rho))>r$ skewered on $[p,q]\cap [x_0-\rho,x_0+\rho]$. 
        \item[{\rm (iii)}] If $x\in A$ and $|x-x_0|<r$ then
        $$|\pi(x)-x_0|\geq\frac{\sqrt{3}}{2}|x-x_0|\geq\sqrt{3}|x-\pi(x)|.$$ 
    \end{enumerate}
    Assume further that
    $$\diam A < r\quad\text{ or }\quad A\subset \R^2 \subset \R^d \text{ is a }B\text{-set.}$$
    Then the following statements hold.
    \begin{enumerate}
    \item[{\rm (iv)}] If $z\in T_1(A)$ then $A\cap\pi^{-1}(\{z\})=\{z\}$. Equivalently,  $A\setminus T_1(A)$ is equal to the union of all leaves of $A$.
    \item[{\rm (v)}] A leave $A_\lambda$ with $\lambda=(a_\lambda,b_\lambda)$ can be of three types: 
    \begin{itemize}
            \item if $a_\lambda, b_\lambda\in [p,q]$ then $a_\lambda,b_\lambda\in T_1(A)$, $\pi(A_\lambda)=\lambda$ and $\overline{A}_\lambda\setminus A_\lambda=\{a_\lambda,b_\lambda\}$, 
            \item if $a_\lambda=-\infty$ and $b_\lambda\in (p,q]$ then $b_\lambda\in T_1(A)$, $\pi(A_\lambda)=[p,b_\lambda)$ and $\overline{A}_\lambda\setminus A_\lambda=\{b_\lambda\}$, 
            \item if $a_\lambda\in [p,q)$ and $b_\lambda=\infty$ then $a_\lambda\in T_1(A)$, $\pi(A_\lambda)=(a_\lambda,q]$ and $\overline{A}_\lambda\setminus A_\lambda=\{a_\lambda\}$.
    \end{itemize}
    \item[{\rm (vi)}] For each leave $A_\lambda$ we have $\reach\overline{A}_\lambda>r$.
    \end{enumerate}
\end{lemma}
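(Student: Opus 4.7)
My plan is to handle the six parts in order, observing that (iv), the fiber-collapse $A\cap\pi^{-1}(\{z\})=\{z\}$ for $z\in T_1(A)$, is the basic fact on which (v) and (vi) rest. Part (i) is immediate from Lemma~\ref{P_PR}(v): $\diam A<r<\reach A$ forces $A$ to be connected, and since $\emptyset\neq T_1(A)\subset[p,q]\subset A$ the set $A$ has at least two points, ruling out isolated ones. For (ii), Lemma~\ref{neprib} (with $\tau=\rho$ and any $r'\in(r,\reach A)$) gives $\reach(A\cap\overline B(x_0,\rho))>r$ together with $T_k(A\cap\overline B(x_0,\rho))=T_k(A)\cap A\cap\overline B(x_0,\rho)$; the defining inclusions for a skewered set then follow from $\overline B(x_0,\rho)\cap W=[x_0-\rho,x_0+\rho]$ (since $x_0\in W$) and the contractivity of $\pi$. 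For (iii), the inclusion $[p,q]\subset A$ forces $e_1\in\Tan(A,x_0)$, so the one-dimensional cone $\Tan(A,x_0)$ spans $W$, and the second half of Corollary~\ref{uhlovy} yields exactly the two claimed inequalities (its internal $\pi$ coinciding with the $W$-projection of Notation~\ref{notation}).

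\textbf{Parts (iv) and (v).} Part (iv) splits by hypothesis. If $\diam A<r$, take $z\in T_1(A)$ and $x\in A$ with $\pi(x)=z$; applying (iii) with $z$ in place of $x_0$ (allowed since $|x-z|\leq\diam A<r$) gives $0=|\pi(x)-z|\geq\tfrac{\sqrt{3}}{2}|x-z|$, forcing $x=z$. If $A\subset\R^2$ is a B-set, Remark~\ref{Rem_listky}(b) shows $T_1(A)\subset\{(t,0):\varphi(t)=\psi(t)=0\}$, and the vertical slice at such $t$ is visibly $\{(t,0)\}$. The second formulation of (iv) follows from $A\cap\pi^{-1}(T_1(A))=T_1(A)$ together with the observation that components $\lambda$ with $\lambda\cap[p,q]=\emptyset$ contribute nothing (as $A\subset\pi^{-1}([p,q])$). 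Part (v) is then a routine case analysis on $\lambda=(a_\lambda,b_\lambda)$: any finite endpoint lies in the closed set $T_1(A)\subset[p,q]$ (Lemma~\ref{T_k}(iii)); at most one endpoint is infinite since $T_1(A)\neq\emptyset$; the projection identity follows from the inclusion $[p,q]\cap\pi^{-1}(\lambda)\subset A_\lambda$; and $\overline{A}_\lambda\setminus A_\lambda$ collapses to the finite endpoints of $\lambda$ by (iv), with each such endpoint actually attained as a limit along the segment $[p,q]$.

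\textbf{Part (vi), the main obstacle.} I would fix $r^*\in(r,\reach A)$ and verify condition (ii) of Proposition~\ref{fedtan} for $\overline{A}_\lambda$ at parameter $r^*$, yielding $\reach\overline{A}_\lambda\geq r^*>r$. At any $a\in A_\lambda$, the sets $\overline{A}_\lambda$ and $A$ agree on a neighbourhood (since $\pi(a)\in\lambda$ is open), so $\Tan(\overline{A}_\lambda,a)=\Tan(A,a)$ and the Federer estimate transfers from $A$ directly. The delicate case is an endpoint $a=a_\lambda$ (the case $b_\lambda$ is symmetric): here $\Tan(\overline{A}_\lambda,a_\lambda)$ is strictly smaller than $\Tan(A,a_\lambda)$, so the $A$-estimate is not automatically sufficient, and this is the main obstacle. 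First I would show $\Tan(\overline{A}_\lambda,a_\lambda)=\{te_1:t\geq 0\}$: containment $\supset$ comes from $[a_\lambda,b_\lambda]\subset\overline{A}_\lambda$, while $\subset$ follows from (iii), which forces any sequence $x_n\in\overline{A}_\lambda$ with $x_n\to a_\lambda$ to satisfy $\pi(x_n)\geq a_\lambda$ and $|x_n-\pi(x_n)|\leq\tfrac{1}{\sqrt{3}}(\pi(x_n)-a_\lambda)$, so the only possible unit limit direction is $+e_1$. Then, for any $b\in\overline{A}_\lambda$, the proof of Corollary~\ref{uhlovy} applied to $A$ with the one-dimensional $\Tan(A,a_\lambda)$ produces $v=\pi(b)-a_\lambda$ together with $|b-a_\lambda-v|\leq|b-a_\lambda|^2/(2r^*)$; crucially, $b\in\overline{A}_\lambda$ forces $\pi(b)\geq a_\lambda$, placing this $v$ in the smaller cone $\{te_1:t\geq 0\}=\Tan(\overline{A}_\lambda,a_\lambda)$ and completing the required estimate.
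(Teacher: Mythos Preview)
Your proof is correct and follows essentially the same route as the paper: (i)--(iii) via Lemma~\ref{P_PR}(v), Lemma~\ref{neprib} and Corollary~\ref{uhlovy}; (iv) by splitting into the $\diam A<r$ case (using (iii)) and the B-set case (Remark~\ref{Rem_listky}(b)); (v) as a direct consequence of (iv); and (vi) via Federer's criterion (Proposition~\ref{fedtan}) applied to $\overline{A}_\lambda$, where the only nontrivial point is an endpoint $a=a_\lambda$ (or $b_\lambda$).

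One small slip in your argument for (vi): the estimate $|x_n-\pi(x_n)|\leq\tfrac{1}{\sqrt 3}(\pi(x_n)-a_\lambda)$ from (iii) does \emph{not} by itself force the unit limit direction to be exactly $+e_1$ (it only confines it to a $30^\circ$ cone around $e_1$). The inclusion $\Tan(\overline{A}_\lambda,a_\lambda)\subset\{te_1:t\ge 0\}$ follows instead from $\Tan(\overline{A}_\lambda,a_\lambda)\subset\Tan(A,a_\lambda)\subset W$ together with $\pi(x)\ge a_\lambda$ for $x\in\overline{A}_\lambda$. This is harmless, however, since your actual estimate for (vi) uses only the \emph{reverse} inclusion $\{te_1:t\ge 0\}\subset\Tan(\overline{A}_\lambda,a_\lambda)$, which you established correctly from the segment. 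The paper phrases the same computation a bit more directly as $\dist(b-a,\Tan(\overline{A}_\lambda,a))=\dist(b-a,W)=\dist(b-a,\Tan(A,a))$, without computing $\Tan(\overline{A}_\lambda,a_\lambda)$ explicitly.
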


\begin{proof}
If $\diam A<r$ then $A$ is connected by Lemma~\ref{P_PR}~(v), and since it contains the segment $[p,q]$, it cannot contain isolated points. Hence, $T_0(A)=\emptyset$ by Lemma~\ref{T_k}~(iv) and assertion (i) follows.    Statement (ii) follows immediately from Lemma~\ref{neprib}, and  (iii) is an application of Corollary~\ref{uhlovy},  \eqref{tan_proj}, since clearly $x_0+\spa(\Tan(A,x_0))=W=\R$. 

Assertion (iv) follows from (iii) in the case $\diam A<r$, and from Remark~\ref{Rem_listky}~(b) in the case of a $B$-set $A$. Property (v) follows from (iv) and the closedness of $A$.

Finally, we prove (vi) using Proposition~\ref{fedtan}. Let $r<r^*<\reach A$ and $a,b\in\overline{A}_\lambda$, $\lambda=(a_\lambda,b_\lambda)$, be given. If $a\in A_\lambda$ then $\Tan(A,a)=\Tan(\overline{A}_\lambda,a)$ (since $A_\lambda$ is relatively open in $A$ and the tangent cone is a local notion). Thus we have
\begin{equation} \label{Fedab}
\dist(b-a,\Tan(\overline{A}_\lambda,a))=\dist(b-a,\Tan(A,a))\leq\frac{|b-a|^2}{2r^*}.
\end{equation}
If, on the other hand, $a\in\overline{A}_\lambda\setminus A_\lambda$, we have $a\in T_1(A)\cap\{a_\lambda,b_\lambda\}$ by (v). Then the definition of a leave easily gives
$$\dist(b-a,\Tan(\overline{A}_\lambda,a))=\dist(b-a,W)=\dist(b-a,\Tan(A,a)),$$
hence \eqref{Fedab} holds again and we infer that $\reach \overline{A}_\lambda\geq r^*>r$ using Proposition~\ref{fedtan}.
\end{proof}

\begin{lemma}  \label{skewer}
Assume that an at most two-dimensional set $A\subset\R^d$ ($d\geq 2$) with positive reach is skewered on a segment $[p,q]\subset\R$, $\reach A> r>0$ and $\diam A<r$. Then, for each leave $A_\lambda$ of $A$, $\reach\overline{A}_\lambda> r$, $A_\lambda\subset T_2(A)$ and $\psi^A_2$ is $(2^{12}\pi/r)$-Lipschitz on $A_\lambda$.
\end{lemma}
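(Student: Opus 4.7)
The plan is to reduce the last conclusion to a single application of Lemma~\ref{lipna2}, applied to the closed leave $\overline{A}_\lambda$ in place of the ``$A$'' of that lemma, and then to transfer the resulting Lipschitz bound for $\psi_2^{\overline{A}_\lambda}$ back to $\psi_2^A$ via equality of tangent cones on the (relatively open) leave $A_\lambda$. The first assertion $\reach\overline{A}_\lambda>r$ is immediate from Lemma~\ref{skew-prop}~(vi). The inclusion $A_\lambda\subset T_2(A)$ follows by elimination: under $\diam A<r$, Lemma~\ref{skew-prop}~(i) rules out $T_0(A)$, relation \eqref{tan_k} together with $\dim A\leq 2$ gives $A=T_1(A)\cup T_2(A)$, and by the very definition of a leave, $A_\lambda$ is disjoint from $T_1(A)\subset[p,q]$.

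To apply Lemma~\ref{lipna2} on $\overline{A}_\lambda$ with the same constant $r$, we first note the two easy hypotheses $r<\reach\overline{A}_\lambda$ (already at hand) and $\diam\overline{A}_\lambda\leq\diam A<r$. The substantive step is the verification of
\[ \overline{A}_\lambda\subset T_1^-(\overline{A}_\lambda)\cup T_2(\overline{A}_\lambda). \]
For $x\in A_\lambda$, since $A$ is closed and $A_\lambda=A\cap\pi^{-1}(\lambda)$ is relatively open in $A$, a sufficiently small ball about $x$ meets $A$ only in $A_\lambda\subset\overline{A}_\lambda$, giving $\Tan(A,x)=\Tan(\overline{A}_\lambda,x)$; combined with $A_\lambda\subset T_2(A)$ this yields $x\in T_2(\overline{A}_\lambda)$. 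For $x\in\overline{A}_\lambda\setminus A_\lambda$, Lemma~\ref{skew-prop}~(v) identifies $x$ as an endpoint of $\lambda=(a_\lambda,b_\lambda)$ and shows that $\pi(A_\lambda)$ lies strictly on one side of $\pi(x)$; hence $\overline{A}_\lambda$ lies in a closed half-space of the form $\{y:\pi(y-x)\geq 0\}$ or $\{y:\pi(y-x)\leq 0\}$, so every nonzero vector of $\Tan(\overline{A}_\lambda,x)$ has $\pi$-component of a fixed sign. This precludes $\Tan(\overline{A}_\lambda,x)$ from being a line, i.e.\ excludes $T_1^+$; since $\overline{A}_\lambda$ is connected (Lemma~\ref{P_PR}~(v), using $\diam\overline{A}_\lambda<r<\reach\overline{A}_\lambda$) and has more than one point, Lemma~\ref{T_k}~(iv) gives $T_0(\overline{A}_\lambda)=\emptyset$, so $x\in T_1^-(\overline{A}_\lambda)\cup T_2(\overline{A}_\lambda)$.

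With all hypotheses in force, Lemma~\ref{lipna2} yields the $(2^{12}\pi/r)$-Lipschitz bound for $\psi_2^{\overline{A}_\lambda}$ on $T_2(\overline{A}_\lambda)$. Since $A_\lambda\subset T_2(\overline{A}_\lambda)$ and $\psi_2^A=\psi_2^{\overline{A}_\lambda}$ on $A_\lambda$ by the tangent-cone equality above, the bound transfers to $\psi_2^A|_{A_\lambda}$. The only genuinely nontrivial ingredient is the half-space classification of tangent cones at the frontier points of $\overline{A}_\lambda$; the rest of the argument is merely a packaging of Lemmas~\ref{skew-prop}, \ref{P_PR}, \ref{T_k} and \ref{lipna2}.
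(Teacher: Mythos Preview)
Your approach is essentially the same as the paper's: invoke Lemma~\ref{skew-prop}~(vi) for the reach bound, use the decomposition \eqref{tan_k} together with Lemma~\ref{skew-prop}~(i) for $A_\lambda\subset T_2(A)$, verify the hypotheses of Lemma~\ref{lipna2} on $\overline{A}_\lambda$, and then transfer the resulting Lipschitz bound back to $\psi_2^A$ via the tangent-cone equality on the relatively open set $A_\lambda$. This is exactly the paper's line of argument.

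There is one small gap in your treatment of the frontier points $x\in\overline{A}_\lambda\setminus A_\lambda$. The half-space inclusion $\Tan(\overline{A}_\lambda,x)\subset\{v:\pi(v)\geq 0\}$ (say) does \emph{not} by itself preclude $\Tan(\overline{A}_\lambda,x)$ from being a full line: any line contained in $e_1^\perp$ lies in that half-space. To close the gap, note that $\lambda\cap[p,q]\subset A_\lambda$ is a nonempty interval with $x$ as an endpoint, so $e_1$ (or $-e_1$) belongs to $\Tan(\overline{A}_\lambda,x)$; hence a line tangent cone would have to equal $W$, and then $-e_1$ (resp.\ $e_1$) would violate the half-space inclusion. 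Alternatively, and this is the paper's shortcut, use Lemma~\ref{skew-prop}~(v) to record that $x\in T_1(A)$, so that $\Tan(\overline{A}_\lambda,x)\subset\Tan(A,x)$ already forces $\dim\Tan(\overline{A}_\lambda,x)\leq 1$; then the half-space inclusion (together with $e_1\in\Tan(\overline{A}_\lambda,x)$ as above) immediately gives $x\in T_1^-(\overline{A}_\lambda)$.
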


\begin{proof}
    Choose a leave $A_\lambda$ with $\lambda=(a_\lambda,b_\lambda)$. 
    We know already that $\reach\overline{A}_\lambda> r$ from Lemma~\ref{skew-prop}~(vi). 
    
    We have $A_\lambda\cap T_1(A)=\emptyset$ (cf.\ Remark~\ref{Rem_listky}~(a)) and, since also $T_0(A)=\emptyset$ by  Lemma~\ref{skew-prop}~(i), using \eqref{tan_k} we obtain $A_\lambda\subset T_2(A)$. 
    
    Since $A_\lambda$ is relatively open in $A$, it follows that $\Tan(A,x)=\Tan(\overline{A}_\lambda,x)$ and so $\psi_2^A(x)=\psi_2^{\overline{A}_\lambda}(x)$ whenever $x\in A_\lambda$. Hence $A_\lambda\subset T_2(\overline{A}_\lambda)$. 
    We obtain from Lemma~\ref{skew-prop}~(v) and the definition of leaves that
    $\overline{A}_\lambda\setminus A_\lambda\subset T_1(A)\cap\{a_\lambda,b_\lambda\}\subset T_1^-(\overline{A}_\lambda)$.
    Thus we can apply Lemma~\ref{lipna2} to $\overline{A}_\lambda$ and obtain that $\psi_2^A|_{A_\lambda}=\psi_2^{\overline{A}_\lambda}$ is $(2^{12}\pi/r)$-Lipschitz.
\end{proof}

\subsection{Product-integrals of operators and the proof of Proposition~\ref{T2}}
 
We will use some basic facts from the well-known theory of \emph{product-integrals} (cf.\ \cite{GJ90}), called also \emph{continuous products} (cf.\ \cite{MacN}). We will use the exposition from \cite{GJ90}.

Under an \emph{interval function} we understand a function of half-closed intervals $(s,t]\subset[0,\infty)$. We will consider square $(n\times n)$ matrix-valued interval functions. For such a function $\alpha$ we will write $\alpha(s,t)$ instead of $\alpha((s,t])$ for short.
We call an interval function $\alpha$ \emph{additive} if $\alpha(s,u)=\alpha(s,t)+\alpha(t,u)$, $0\leq s\leq t\leq u$, where we set $\alpha(v,v):=0$, $v\geq0$, and \emph{right-continuous} if $\lim_{t\to s_+}\alpha(s,t)=0$, $s\geq 0$.
We say that a matrix-valued additive interval function $\alpha(s,t)$ is \emph{dominated} by a scalar additive interval function $\alpha_0(s,t)$ if $\|\alpha(s,t)\|_{GJ}\leq\alpha_0(s,t)$, $0\leq s\leq t$ where 
$$\left\|\left(a_{ij}\right)_{i,j=1}^n\right\|_{GJ}:=\max_i\sum_j|a_{ij}|.$$

Let a matrix-valued additive interval function $\alpha(s,t)$ be dominated by a scalar additive and right-continuous interval function $\alpha_0(s,t)$. 
Then, clearly, $\alpha$ is right-continuous as well. In this situation, by \cite[Theorem~1]{GJ90}, we can define the product-integral 
$$\mu(s,t)=\mu((s,t])=\prod_{(s,t]}(I+d\alpha):=\lim_{\|\cD\|\to 0}\prod_{i=1}^k(I+\alpha(t_{i-1},t_i)),\quad 0\leq s<t,$$
where $I$ denotes the identity matrix and $\cD=\{s=t_0<t_1<\dots<t_k=t\}$ is a partition of $[s,t]$ with mesh $\|\cD\|$. We also set $\mu(s,s):=I$, $s\geq 0$. Then $\mu(s,t)$ is \emph{multiplicative} in the sense that 
\begin{equation} \label{multiplicative}
\mu(s,u)=\mu(s,t)\cdot \mu(t,u), \quad 0\leq s\leq t\leq u,
\end{equation}
and $\mu-I$ is dominated by $\mu_0-1$ (see \cite[Theorem~1]{GJ90}), where
$$\mu_0(s,t):=\prod_{(s,t]}(1+d\alpha_0)$$
(which exists by \cite[Proposition~3]{GJ90}).
Also, by \cite[Proposition~1]{GJ90} (see also \cite[(20)]{GJ90}), we have
$$\mu_0(s,t)=\prod_{(s,t]}(1+d\alpha_0)\leq\exp(\alpha_0(s,t)).$$
Consequently, we have
\begin{equation}  \label{mu_dom}
\|\mu(s,t)-I\|_{GJ}\leq \exp(\alpha_0(s,t))-1,\quad 0\leq s\leq t.
\end{equation}

\begin{remark}  \label{rem_GJ}
    (i) An additive interval function $\alpha$ can be identified with the distribution function $F(x)=\alpha(0,x)$, $x\geq 0$, which is used in \cite{MacN} to define the `continuous product' $\prod_s^t(I+dF)$. The approach of \cite{MacN} is slightly more general, in particular, $F$ is not assumed to be right-continuous, but only of finite variation. Note that in \cite{GJ90}, right-continuity is already included in the definition of an additive interval function, which we do not adopt here (but our interval functions are right continuous).

    (ii) In our application we will work with interval functions valued in $\cL(\R^d,\R^d)$ (linear mappings from $\R^d$ to $\R^d$), which we identify with the space of all $d\times d$ matrices. We use, however, the usual operator norm $\|\cdot\|$ on  $\cL(\R^d,\R^d)$ which differs from $\|\cdot\|_{GJ}$. We will use the well-known fact that we can choose a constant $k_d\geq 1$ such that
    \begin{equation}  \label{norm-eq}
    k_d^{-1}\|\cdot\|_{GJ}\leq\|\cdot\|\leq k_d\|\cdot\|_{GJ}.
    \end{equation}
\end{remark}

\begin{proof}[Proof of Proposition~\ref{T2}] \vskip 2mm
    Recall that Notation~\ref{notation} is used.
    Let $r>0$ be such that $\reach S> r>0$ and denote $L:=2^{12}\pi/r$. Choose $\rho>0$ such that $[-\rho,\rho]\subset[c,d]$ and 
    \begin{equation} \label{rho} 
    2\rho<1/(Lek_d^2)<r,
    \end{equation}
    and set $S':=S\cap\overline{B}(0,\rho)$. Then $\reach S'>r$ and $S'$ is skewered on $[-\rho,\rho]$ (we use  Lemma~\ref{skew-prop}~(ii)). Set $A:=S'+\rho$. (Recall that $\rho\in \R= W\subset\R^d$ due to Notation~\ref{notation}.) Then
    \begin{equation}  \label{diam<}
    \reach A>r,\quad \diam A\leq 2\rho<r,\quad \dim A\leq 2, \quad \rho\in T_1(A)
    \end{equation}
     and $A$ is skewered on $[0,2\rho]$.

    Denote $C:=T_1(A)\subset [0,2\rho]$ and let $\Lambda$ be the family of all components $\lambda$ of $W\setminus C$ for which $\lambda\cap[0,2\rho]$ is nonempty. Denote
    $$(a_\lambda,b_\lambda):=\lambda\cap(0,2\rho),\quad \lambda\in\Lambda.$$
    Lemma~\ref{skewer} together with \eqref{diam<} imply that for each $\lambda\in\Lambda$, the leave $A_\lambda=A\cap\pi^{-1}(\lambda)$ satisfies $\reach \overline{A}_\lambda>r$ and the mapping $\psi_2^A: x\mapsto \widetilde{\Tan}(A,x)$ is $L$-Lipschitz on $A_\lambda\subset T_2(A)$, hence, $\psi_2^A|_{A_\lambda}$ has an $L$-Lipschitz extension to $\psi^\lambda:\overline{A}_\lambda\to G(d,2)$. (Note that $G(d,2)$ is complete, cf.\ our remark after Definition~\ref{gap}.)

    Denote $T_\lambda:=\psi^\lambda(a_\lambda)$, $\lambda\in\Lambda$. We claim that
    \begin{equation}   \label{Tan=W}
W\subset T_\lambda=\psi^\lambda(a_\lambda)\quad\text{ and }\quad W\subset\psi^\lambda(b_\lambda).
    \end{equation}
    Indeed, since $T_\lambda=\lim_{t\to 0_+}\psi_2^A(a_\lambda+t)$ and $\psi_2^A(a_\lambda+t)$ contains $W$ for all $t>0$ small enough, it is easy to see that also $T_\lambda$ contains $W$. The second inclusion is obtained quite analogously.
    By the Lipschitzness of $\psi^\lambda$ we have
    \begin{equation} \label{lip_psi}
    \rho_2(T_\lambda,\psi^\lambda(x))\leq L|x-a_\lambda|\leq L|b_\lambda-a_\lambda|,\quad x\in\overline{A}_\lambda.
    \end{equation}
    Consequently, using \eqref{vlga}, we get
    \begin{equation}  \label{vzd_pr}
        \left\|\pi_{\psi^\lambda(a_\lambda)}-\pi_{\psi^\lambda(b_\lambda)}\right\|=\rho_2(\psi^\lambda(a_\lambda),\psi^\lambda(b_\lambda))\leq L|b_\lambda-a_\lambda|.
    \end{equation}

    Consider the operator-valued interval function
    $$\alpha(s,t):=\sum_{s< b_\lambda\leq t}\left(\pi_{\psi^\lambda(a_\lambda)}-\pi_{\psi^\lambda(b_\lambda)}\right),\quad 0\leq s\leq t$$
    (an empty sum is defined as $0$).
    The sum converges (absolutely) since, due to \eqref{vzd_pr},
    $$\sum_{s< b_\lambda\leq t}\left\|\pi_{\psi^\lambda(a_\lambda)}-\pi_{\psi^\lambda(b_\lambda)}\right\|\leq \omega(s,t):=L\sum_{s< b_\lambda\leq t}|b_\lambda-a_\lambda|\leq Lt.$$
    Set $\alpha_0:=k_d\omega$. Clearly, both $\alpha$ and $\alpha_0$ are additive interval functions, $\alpha_0$ is right-continuous and 
    $\|\alpha(s,t)\|_{GJ}\leq k_d\|\alpha(s,t)\|\leq k_d\omega(s,t)=\alpha_0(s,t)$, hence, $\alpha$ is dominated by $\alpha_0$.    
    Hence, the product integral
    $$\mu(s,t):=\prod_{(s,t]}(I+d\alpha),\quad 0\leq s\leq t,$$
    exists, is multiplicative (cf.\ \eqref{multiplicative}) and we have by \eqref{mu_dom} 
    $$\|\mu(s,t)-I\|\leq k_d\|\mu(s,t)-I\|_{GJ}\leq k_d \left(\exp(\alpha_0(s,t))-1\right),\quad 0\leq s\leq t.$$
    Also, we easily obtain the estimate
    \begin{equation} \label{alfa0}
    \omega(s,t)\leq L|t-s|,\quad s\leq t,\quad s\in C':=C\cup\{0,2\rho\}.
    \end{equation}
    Consequently, by \eqref{alfa0}, for $2\rho\geq t\geq s\in C'$ we have
    \begin{equation}  \label{mu-I}
    \|\mu(s,t)-I\|\leq k_d\left(\exp(k_dL|t-s|)-1\right)\leq M|t-s|
    \end{equation}
    with $M:=k_d^2eL$, since $k_dL|s-t|\leq 2k_dL\rho\leq 1$ by \eqref{rho}, and $e^x-1\leq ex$ for $x\in[0,1]$. Since $2M\rho<1$ by \eqref{rho}, we obtain for all $2\rho\geq t\geq s\in C'$ that $\|\mu(s,t)-I\|<1$ and, consequently, $\|\mu(s,t)\|<2$ and $\mu(s,t)$ is a bijective linear mapping.
    So, denoting $\mu_t:=\mu(0,t)$, $t\geq 0$, 
    we have
    \begin{equation} \label{mu_bounded}
    \mu_t\text{ is bijective and }\|\mu_t\|\leq 2,\quad t\in [0,2\rho].
    \end{equation}
    Let us say that a linear mapping $G:\R^d\to\R^d$ satisfies \emph{property} (P) if $G(x)=x$ whenever $x\in W$, and $\pi\circ G=\pi$. 
    We shall show that
    \begin{equation}   \label{P}
        \pi_{T_\lambda}\text{ and }\mu_t\text{ have property (P)},\quad \lambda\in\Lambda,\, t\geq 0.
    \end{equation}
    Using \eqref{Tan=W}, it is easy to check that $\pi_{\psi^\lambda(a_\lambda)}=\pi_{T_\lambda}$ and $\pi_{\psi^\lambda(b_\lambda)}$ satisfy property (P).
     Thus $(\pi_{\psi^\lambda(a_\lambda)}-\pi_{\psi^\lambda(b_\lambda)})|_W=0$ 
    and 
    $\pi\circ(\pi_{\psi^\lambda(a_\lambda)}-\pi_{\psi^\lambda(b_\lambda)})=\pi-\pi=0$, hence for any $0\leq s\leq t$,
    $\alpha(s,t)|_W=\sum_{s<b_\lambda\leq t}(\pi_{\psi^\lambda(a_\lambda)}-\pi_{\psi^\lambda(b_\lambda)})|_W=0$ 
    and 
    $$\pi\circ\alpha(s,t)=\pi\circ\sum_{s<b_\lambda\leq t}(\pi_{\psi^\lambda(a_\lambda)}-\pi_{\psi^\lambda(b_\lambda)})=\sum_{s<b_\lambda\leq t}\pi\circ(\pi_{\psi^\lambda(a_\lambda)}-\pi_{\psi^\lambda(b_\lambda)})=0.$$
    Thus we get that $I+\alpha(s,t)$ satisfies property (P) for any $0\leq s\leq t$.
    Further, property (P) is preserved under finite compositions and limits of sequences of linear operators. Hence, using the definition of the product-integral, we obtain that $\mu_t=\mu(0,t)$ is the limit of a sequence of operators with property (P) and, consequently, has property (P) as well. 

    We define the linear mappings
    $$\varphi_\lambda(x):= \pi_{T_\lambda}+\pi_{\psi^\lambda(x)^\perp}\quad x\in\overline{A}_\lambda,\quad \lambda\in\Lambda.$$
    Since clearly 
    \begin{equation}  \label{45}
        \varphi_\lambda(x)-I=\pi_{T_\lambda}+\pi_{\psi^\lambda(x)^\perp}-
        \left(\pi_{\psi^\lambda(x)}+\pi_{\psi^\lambda(x)^\perp}\right)=\pi_{T_\lambda}-\pi_{\psi^\lambda(x)},
    \end{equation} 
    we obtain
    \begin{equation} \label{phiab}
    \varphi_\lambda(a_\lambda)=I\text{ and } \varphi_\lambda(b_\lambda)=I+\pi_{\psi^\lambda(a_\lambda)}-\pi_{\psi^\lambda(b_\lambda)},
    \end{equation}
    and, using \eqref{45}, \eqref{vlga} and \eqref{lip_psi}, 
    \begin{equation} \label{Q-I}
    \|\varphi_\lambda(x)-I\|\leq L |x-a_\lambda|,\quad x\in \overline{A}_\lambda.
    \end{equation}
    Note also that, by definition of the product-integral,
    \begin{equation}    \label{muab}
    \mu(a_\lambda,b_\lambda)=I+\pi_{\psi^\lambda(a_\lambda)}-\pi_{\psi^\lambda(b_\lambda)}=\varphi_\lambda(b_\lambda),
    \end{equation}
    since for each $(s,t)\subset(a_\lambda,b_\lambda)$, $\alpha(s,t)=\pi_{\psi^\lambda(a_\lambda)}-\pi_{\psi^\lambda(b_\lambda)}$ if $t=b_\lambda$ and $\alpha(s,t)=0$ otherwise.
    
    We define further functions $f:A\to\R^d$ and $\varphi:A\to\cL(\R^d,\R^d)$ as follows:
    \begin{align*}
        f(x)&:=\begin{cases}
            x, & x\in C,\\
            (\mu_{a_\lambda}\circ \pi_{T_\lambda})(x),& x\in \overline{A}_\lambda,\, \lambda\in\Lambda,
        \end{cases}\\
        \varphi(x)&:=\begin{cases}
            \mu_x, & x\in C,\\
            \mu_{a_\lambda}\circ \varphi_\lambda(x), & x\in \overline{A}_\lambda,\, \lambda\in\Lambda.
        \end{cases}
    \end{align*}
    First, we have to verify the consistency of the definitions in the case if $x\in C\cap\overline{A}_\lambda$ for some $\lambda\in\Lambda$ (then $x=a_\lambda$ or $x=b_\lambda$). Note that $\pi_{T_\lambda}(x)=\mu_{a_\lambda}(x)=x$ in both cases by \eqref{P}, hence, the definition of $f$ is consistent. Further, we have $\varphi_\lambda(a_\lambda)=I$ (see \eqref{phiab}) and $\mu_{a_\lambda}\circ \varphi_\lambda(b_\lambda)=\mu_{a_\lambda}\circ \mu(a_\lambda,b_\lambda)=\mu_{b_\lambda}$ by \eqref{muab} and the multiplicativity of $\mu$ (see \eqref{multiplicative}), hence, also the definition of $\varphi$ is consistent.

    Using the definition of $f$ and \eqref{P} we obtain
    \begin{equation} \label{Pf}
        f(x)=x,\ x\in A\cap W,\quad\text{ and }\quad\pi(f(x))=\pi(x),\quad x\in A.
    \end{equation}

    Now we are going to extend $f$ using Fact~\ref{Wh}. To this end, we will find $c>0$ such that conditions \eqref{Whitney_1} and \eqref{Whitney_2} hold for $f$, $\varphi$ and all $x,y\in A$. We will distinguish several cases.

    Case I: $x,y\in C$. To prove \eqref{Whitney_1}, we can assume without loss of generality that $x\leq y$ and, using the multiplicativity of $\mu$, we have
    $$\varphi(y)-\varphi(x)=\mu_y-\mu_x=\mu(0,y)-\mu(0,x)=\mu(0,x)(\mu(x,y)-I),$$
    and \eqref{Whitney_1} follows from \eqref{mu-I} and \eqref{mu_bounded} with $c_1:=2M$. Further, we have $f(y)-f(x)-\varphi(x)(y-x)=0$ since $\varphi(x)(y-x)=\mu_x(y-x)=y-x$ by \eqref{P}, hence \eqref{Whitney_2} is true with $c_1$ as well. 
        
    Case II: $x,y\in\overline{A}_\lambda$ for some $\lambda\in\Lambda$. Then
    \begin{align*}
        \|\varphi(y)-\varphi(x)\|&=\|\mu_{a_\lambda}\circ(\varphi_\lambda(y)-\varphi_\lambda(x))\|\\
        &= \|\mu_{a_\lambda}\circ(\pi_{\psi^\lambda(y)^\perp}-\pi_{\psi^\lambda(x)^\perp})\|\\
        &\leq \|\mu_{a_\lambda}\|\|\pi_{\psi^\lambda(y)^\perp}-\pi_{\psi^\lambda(x)^\perp}\|\\
        &= \|\mu_{a_\lambda}\|\rho_2(\psi^\lambda(x),\psi^\lambda(y))\leq 2L|y-x|,
    \end{align*}
    where \eqref{vlga} and the $L$-Lipschitzness of $\psi^\lambda$ were used. Also, 
     using \eqref{mu_bounded}, we get
    \begin{align*}
        |f(y)-&f(x)-\varphi(x)(y-x)|=|\mu_{a_\lambda}\left(\pi_{T_\lambda}(y)-\pi_{T_\lambda}(x)-\varphi_\lambda(x)(y-x)\right)|\\
        &=|\mu_{a_\lambda}\left(\pi_{\psi^\lambda(x)^\perp}(x-y)\right)|
        \leq 2|\pi_{\psi^\lambda(x)^\perp}(y-x)|\leq 2\frac{|y-x|^2}{2r},
    \end{align*}
    where the last estimate follows from Proposition~\ref{fedtan}, since 
    $$|\pi_{\psi^\lambda(x)^\perp}(y-x)|=\dist(y-x,\psi^\lambda(x))\leq \dist(y-x,\Tan(A,x))$$
    (we use here that $\Tan(A,x)\subset\psi^\lambda(x)$ which is obvious if $x\in A_\lambda$ and follows from \eqref{Tan=W} and the fact that $\Tan(A,x)\subset W$ if $x\in \overline{A_\lambda}\setminus A_\lambda\subset C$).
    Hence, \eqref{Whitney_1} and \eqref{Whitney_2} follow with $c_2:=\max\{2L,\frac{1}{r}\}$.

    Case III: $x\in\overline{A}_\beta$ and $y\in\overline{A}_\lambda$ with $\beta\neq\lambda$, $\beta,\lambda\in\Lambda$. Assume that $b_\beta\leq a_\lambda$ (the case $b_\lambda\leq a_\beta$ can be done quite analogously). 
    Since both $b_\beta$ and $a_\lambda$ belong to $T_1(A)$, $|x-b_\beta|<r$, $|y-a_\lambda|<r$ and $|x-a_\lambda|<r$, we have from Lemma~\ref{skew-prop}~(iii) 
    \begin{equation*} 
    |\pi(y)-a_\lambda|\geq\tfrac{\sqrt{3}}{2} |y-a_\lambda|, \quad |b_\beta-\pi(x)|\geq\tfrac{\sqrt{3}}{2}|b_\beta-x|,\quad |\pi(x)-a_\lambda|\geq\tfrac{\sqrt{3}}{2} |x-a_\lambda|,
    \end{equation*}
    which gives (together with the easy observation $|a_\lambda-b_\beta|\leq|x-y|$) 
    \begin{equation} \label{duvf}
        \max\left\{|y-a_\lambda|,|x-a_\lambda|,|b_\beta-x|,|a_\lambda-b_\beta|\right\}\leq\tfrac{2}{\sqrt{3}}|y-x|.
    \end{equation}
    Hence, using the estimates from Cases I and II, we have
    \begin{align*}
        \|\varphi(y)-\varphi(x)\|&\leq \|\varphi(y)-\varphi(a_\lambda)\|+\|\varphi(a_\lambda)-\varphi(b_\beta)\|+\|\varphi(b_\beta)-\varphi(x)\|\\
        &\leq c_2(|y-a_\lambda|+|b_\beta-x|)+c_1|a_\lambda-b_\beta|\\
        &\leq c_2\tfrac{2}{\sqrt{3}}(|\pi(y)-a_\lambda|+|b_\beta-\pi(x)|)+c_1|a_\lambda-b_\beta|\\
        &\leq c_3'|\pi(y)-\pi(x)|\leq c_3'|y-x|
    \end{align*}
    with $c_3':=\max\{c_2\frac{2}{\sqrt{3}},c_1\}$,   where we have used \eqref{duvf}. Also, using the estimates from Case II, we get
    \begin{align*}
        |f(y)-f(x)-&\varphi(x)(y-x)|\\
        \leq&|f(y)-f(a_\lambda)-\varphi(a_\lambda)(y-a_\lambda)|\\
        &+|f(a_\lambda)-f(b_\beta)-\varphi(b_\beta)(a_\lambda-b_\beta)|\\
        &+|f(b_\beta)-f(x)-\varphi(x)(b_\beta-x)|\\
        &+|(\varphi(a_\lambda)-\varphi(x))(y-a_\lambda)|+|(\varphi(b_\beta)-\varphi(x))(a_\lambda-b_\beta)|\\
        \leq& c_2(|y-a_\lambda|^2+|b_\beta-x|^2)\\ 
        &+\|\varphi(a_\lambda)-\varphi(x)\||y-a_\lambda|+  \|\varphi(b_\beta)-\varphi(x)\||a_\lambda-b_\beta|\\
        \leq& c_2(|y-a_\lambda|^2+|b_\beta-x|^2)\\
        &+c_2\left(|a_\lambda-x||y-a_\lambda|+|b_\beta-x||a_\lambda-b_\beta|\right)\\
        &\leq c_2\tfrac{16}{3} |y-x|^2,
    \end{align*}
    where we have used \eqref{duvf} in the last step.
    So we verified \eqref{Whitney_1} and \eqref{Whitney_2} with $c_3:=\max\{\frac{16}{3}c_2,c_3'\}$. 
    
    In the remaining case (Case IV) we obtain a corresponding $c_4$ similarly as in Case III. If, e.g., $x\in \overline{L}_\lambda$, $y\in C$ and $y>b_\lambda$ we use estimates from Case I for the points $y, b_\lambda$ and estimates from Case II for the points $b_\lambda$ and $x$. So we can put $c:=\max\{c_1,c_2,c_3,c_4\}$.

    We can now apply Fact~\ref{Wh} (Whitney's $C^{1,1}$ extension theorem) and find a $C^{1,1}$ mapping $F:\R^d\to\R^d$ such that $F(x)=f(x)$ and $DF(x)=\varphi(x)$ whenever $x\in A$. 
    Since $DF(\rho)=\mu_\rho$ is bijective (see \eqref{mu_bounded}), we can apply Fact~\ref{locdif} to $F$ and find an $\ep>0$ such that the restriction of $F$ to $B(\rho,\ep)$ is a $C^{1,1}$-diffeomorphism. We have $F(x)=x$ for all $x\in[0,2\rho]$ by \eqref{Pf}, in particular, $F(\rho)=\rho$.

Choose $0<\delta<\min\{\rho,\ep\}$ and define
$$A_1:=A\cap\overline{B}(\rho,\delta)\quad \text{ and }\quad  \tilde{A}:=F(A_1).$$
Using Lemma~\ref{skew-prop}~(ii) we obtain that $\reach A_1>0$, $\dim A_1\leq 2$ and $A_1$ is skewered on $[\rho-\delta,\rho+\delta]$. Applying Lemma~\ref{reach_C11} we get also $\reach \tilde{A}>0$, and clearly we have $\dim \tilde{A}\leq 2$. Equations \eqref{Pf} together with $F|_A=f$ yield $[\rho-\delta,\rho+\delta]\subset \tilde{A}\subset\pi^{-1}([\rho-\delta,\rho+\delta])$, and, with Lemma~\ref{tkdif}~(iii) also
\begin{equation} \label{T1again}
T_1(\tilde{A})=T_1(F(A_1))=F(T_1(A_1))=T_1(A_1)\subset [\rho-\delta,\rho+\delta];
\end{equation}
thus $\tilde{A}$ is skewered on $[\rho-\delta,\rho+\delta]$.
We also have $\rho\in T_1(\tilde{A})$ by \eqref{diam<} and \eqref{T1again}.

Further, for any $\lambda\in\Lambda$, the leave $A_\lambda$ fulfills $F(A_\lambda)\subset V_\lambda$, where $V_\lambda$ is the image of the linear mapping $\mu_{a_\lambda}\circ\pi_{T_\lambda}$ (this follows directly from the fact $f=F|_A$ and from the definition of $f$). We have $V_\lambda\in G(d,2)$  by \eqref{mu_bounded}.
Note that $T_1(A_1)=T_1(A)\cap[\rho-\delta,\rho+\delta]$ by Lemma~\ref{neprib}, hence
each leave of $A_1$ is a subset of a leave of $A$. Also, each leave of $\tilde{A}$ is the $F$-image of a leave of $A_1$ by \eqref{T1again} and \eqref{Pf}. Thus we get that $\tilde{A}$ has planar leaves. 

The proof is finished by setting $\tilde{S}:=\tilde{A}-\rho$ and $[\tilde{c},\tilde{d}]:=[-\delta,\delta]$, since $(S,0)\approx (A,\rho)\approx (\tilde{A},\rho)\approx (\tilde{S},0)$.
\end{proof}

\subsection{Proof of Proposition~\ref{T3}} 
In this subsection we will work with multirotations (see Definition~\ref{multrot}). We will use the easy fact that if $\rho:
\R^d\to\R^d$ is a multirotation than it is a bijection and $\rho^{-1}$ is again a multirotation associated with the same set as $\rho$.

The proof of Proposition~\ref{T3} is based on two lemmas.

\begin{lemma}\label{prorot} 
Let $A\subset\R^d$ ($d\geq  3$) be a set with positive reach skewered on a segment $[p,q]\subset \R$ and let $r>0$ be such that  $0< r < \reach A$ and 
\begin{enumerate}
    \item[{\rm (a)}] $\diam A < r$, or
    \item[{\rm (b)}] $A\subset \R^2 \subset \R^d$ is a $B$-set.
\end{enumerate}	
Let $\rho$ be a multirotation associated with $T_1(A)$.
Then 
\begin{enumerate}
\item[{\rm (i)}]
the mapping  $\rho|_A$ is bi-Lipschitz,
\item[{\rm (ii)}]
 $\rho(A)$ is compact and $\reach \rho(A) > 0$,
\item[{\rm (iii)}] $T_1(\rho(A))=T_1(A)$ and $\rho(A)$ is skewered on $[p,q]$.
\end{enumerate}  
\end{lemma}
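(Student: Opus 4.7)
For (i), I first observe two elementary properties of any multirotation $\rho$: since every 2-rotation fixes $W=\spa\{e_1\}$ pointwise (the 2-plane of rotation being orthogonal to $e_1$), we have $\pi\circ\rho=\pi$ and $|\pi_{W^\perp}\circ\rho|=|\pi_{W^\perp}|$ on $\R^d$. Moreover, Lemma~\ref{skew-prop}(iv) gives $A\cap\pi^{-1}(\{z\})=\{z\}$ for every $z\in T_1(A)$, so the two definitions of $\rho$ on $\pi^{-1}(\overline\lambda)$ (as $R_\lambda$) and on $\pi^{-1}(T_1(A))$ (as the identity) match on the common boundary of $A$, and $\rho$ coincides with the isometry $R_\lambda$ throughout $A\cap\pi^{-1}(\overline\lambda)$. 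Consequently, for $x,y\in A$ lying both in the same $A\cap\pi^{-1}(\overline\lambda)$ or both in $T_1(A)$, $|\rho(x)-\rho(y)|=|x-y|$. In the ``mixed'' case, Pythagoras gives
\[
|\rho(x)-\rho(y)|^2=|\pi(x-y)|^2+|\pi_{W^\perp}(\rho(x))-\pi_{W^\perp}(\rho(y))|^2,
\]
and analogously for $|x-y|^2$. Under (a), Lemma~\ref{skew-prop}(iii) applied at a point $x_0\in T_1(A)$ separating $\pi(x)$ and $\pi(y)$ yields $|\pi_{W^\perp}(x)|\leq\tfrac{1}{\sqrt 3}|\pi(x)-\pi(y)|$ and the same for $y$; under (b), the Lipschitzness of $\varphi,\psi$ combined with their vanishing at leaf endpoints in $T_1(A)$ produces the analogous linear bound with some constant $K$. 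Together with $|\pi_{W^\perp}(\rho(x))-\pi_{W^\perp}(\rho(y))|\leq|\pi_{W^\perp}(x)|+|\pi_{W^\perp}(y)|$, these estimates give a uniform bi-Lipschitz constant for $\rho|_A$ in both cases.

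For (ii), compactness of $\rho(A)$ is immediate: $A$ is compact (closed by positive reach and bounded under either hypothesis), and $\rho|_A$ is continuous by (i). Positive reach is more delicate. By Lemma~\ref{P_PR}(i) together with compactness of $\rho(A)$, it suffices to verify $\reach(\rho(A),a')>0$ at every $a'\in\rho(A)$, which I do via Lemma~\ref{local}. If $a'=R_\lambda(a)$ with $a$ interior to some leaf, then $B(a,\delta)\subset\pi^{-1}(\lambda)$ for sufficiently small $\delta$, so $\rho(A)\cap B(a',\delta)$ is the isometric $R_\lambda$-image of $A\cap B(a,\delta)$ and therefore has positive reach. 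If $a'=a\in T_1(A)$, I set $E:=\rho(A)\cap\overline B(a,\delta)$ for small $\delta$ and verify Federer's condition (Proposition~\ref{fedtan}) for $E$. The crucial input is Corollary~\ref{uhlovy} applied at $a$, which delivers the \emph{quadratic} bound
\[
|\pi_{W^\perp}(y)|\leq\frac{|y-a|^2}{2r}\qquad\text{for all }y\in A\text{ with }|y-a|<r.
\]
Combined with $|\pi_{W^\perp}(\rho(y))|=|\pi_{W^\perp}(y)|$, this renders the cross-leaf correction $|R_\lambda^{-1}R_\mu(y)-y|\leq 2|\pi_{W^\perp}(y)|$ quadratic in $|y-a|$, which is exactly the order required by Federer's inequality. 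A case analysis (same rotated leaf, one point in $T_1(A)$, two distinct rotated leaves) then confirms Federer for $E$ with some $r'>0$.

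For (iii), the identity $\Tan(\rho(A),a)=\Tan(A,a)$ for $a\in T_1(A)$ follows since any tangent vector of $\rho(A)$ at $a$ arises from a sequence $\rho(x_n)=R_{\lambda_n}(x_n)\to a$, and each $R_{\lambda_n}$ fixes $W\supset\Tan(A,a)$, so after passing to a subsequence one recovers a tangent vector of $A$ at $a$ (which already lies in $W$ by Corollary~\ref{uhlovy}). At $a'=R_\lambda(a)$ with $a\in A_\lambda$ interior, local coincidence gives $\Tan(\rho(A),a')=R_\lambda(\Tan(A,a))$, again of dimension $2$. Hence $T_1(\rho(A))=T_1(A)$. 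Finally, $\rho$ fixes $[p,q]\subset W$ pointwise (both the identity and every $R_\lambda$ fix the $x_1$-axis), so $[p,q]\subset\rho(A)$; and $\pi\circ\rho=\pi$ yields $\rho(A)\subset\pi^{-1}([p,q])$. Thus $\rho(A)$ is skewered on $[p,q]$.

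The main obstacle is the Federer verification in (ii) at $T_1(A)$-points: the cross-leaf correction $|R_\lambda^{-1}R_\mu(y)-y|$ is a priori only \emph{linear} in $|\pi_{W^\perp}(y)|$, which would make Federer's estimate fail by a full order of magnitude. The quadratic bound from Corollary~\ref{uhlovy} (available precisely because $\dim\Tan(A,a)=1$ at such $a$) is what rescues the argument and yields positive reach of $\rho(A)$ through the gluing of the rotated leaves.
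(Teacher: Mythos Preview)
Your argument for (i) and (iii) is essentially the paper's, and correct. For (ii) you take a genuinely different route: the paper verifies the \emph{midpoint criterion} (Lemma~\ref{str}) for $\rho(A)$ globally, whereas you localize and aim at Federer's tangent criterion (Proposition~\ref{fedtan}) on $E=\rho(A)\cap\overline B(a,\delta)$. Both routes rest on the same key quadratic estimate (the paper packages it as $|x-\pi(x)|\le F|\pi(x)-z|^2$ for $z\in T_1(A)$, proved uniformly for cases (a) and (b)), but the midpoint criterion is cleaner here: one only has to \emph{produce} a point $s\in\rho(A)$ near $(x+y)/2$ (namely $s=\tfrac{\pi(x)+\pi(y)}2\in[p,q]$ in the cross-leaf case), and no tangent cones need to be identified.

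There is a small gap in your localization step. Federer's condition for $E=\rho(A)\cap\overline B(a,\delta)$ involves $\Tan(E,x)$, which for $x\in\partial B(a,\delta)$ is in general a \emph{proper} subcone of $\Tan(\rho(A),x)$; your estimates control only $\dist(y-x,\Tan(\rho(A),x))$, and the inequality goes the wrong way to conclude anything about $\dist(y-x,\Tan(E,x))$. The fix is either to drop the localization and verify Federer for $\rho(A)$ directly (this works, but in case (b) you then need the quadratic bound for \emph{all} pairs, including those with $|x-z|\ge r$; the paper supplies this via the Lipschitz constant of $\varphi,\psi$), or to switch to Lemma~\ref{str}, which sidesteps the tangent-cone issue entirely. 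Either way, the essential analytic input you identified --- that the cross-leaf defect $|R_\lambda^{-1}R_\mu(y)-y|\le 2|\pi_{W^\perp}(y)|$ is controlled quadratically by the distance to a separating $z\in T_1(A)$ --- is exactly what makes the argument go through.
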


\begin{proof}
We will treat both cases (a) and (b) together, using  that there exist $E>0$ and $F>0$
 such that in both cases, for each $z \in T_1(A)$ and $x\in A$,
\begin{equation}\label{EF}
|x-z| \leq E |\pi(x)-z|\quad \text{and}\quad |x-\pi(x)|\leq F|\pi(x)-z|^2. 
\end{equation}
To prove this claim, fix arbitrary $z \in T_1(A)$ and $x\in A$. First observe that Corollary~\ref{uhlovy} and Proposition~\ref{fedtan} imply that
\eqref{EF} holds with $E=E_1:= 2/\sqrt 3$ and $F=F_1:= 2/3r$ whenever $|z-x|<r$, and so always in case (a).

In case (b)  (in which  $[c,d]=[-r,r]$ or $[c,d]=[0,r]$) recall that functions $\vf$, $\psi$ from
Definition \ref{B-sets} ((i) or (ii)) are $L$-Lipchitz for some $L>0$. Since $z\in T_1(A)$ we have $\vf(z) = \psi(z)=0$ by Remark~\ref{Rem_listky}~(b) and clearly
  $|x-\pi(x)| \leq \max\{\vf(\pi(x)), -\psi(\pi(x))\}$. Thus we have
	$|x-\pi(x)|\leq L|\pi(x)-z|$ and consequently $|x-z| \leq  \sqrt{1+L^2} |\pi(x)-z|$.
	 So, if  $|\pi(x)-z| < r/\sqrt{1+L^2} $, then $|x-z|<r$ and so \eqref{EF} holds with  $E=E_2:=\sqrt{1+L^2}$
	 and $F=F_1$.  If  $|\pi(x)-z| \geq r/\sqrt{1+L^2} $, then 
$$  |x-\pi(x)|\leq L|\pi(x)-z| \leq L|\pi(x)-z|\cdot \frac{|\pi(x)-z|\sqrt{1+L^2}}{r}=: F_2|\pi(x)-z|^2 .$$
So, to prove our claim  we can choose $E= \max(E_1,E_2)$ and $F= \max(F_1,F_2)$.

To prove (i), consider two different points  $w_1,w_2 \in A$ and denote $\tilde{w}_i:=\rho(w_i)$, $i=1,2$. We can suppose $\pi(w_1)\leq\pi(w_2)$.
Let $\Lambda'$ be the family of all components $\lambda$ of $\R\setminus T_1(A)$ such that $\lambda\cap[p,q]\neq\emptyset$. Recall that $A_\lambda:=A\cap\pi^{-1}(\lambda)\neq\emptyset$, $\lambda\in\Lambda'$, are the leaves of $A$.

    First suppose that there exists  $\lambda \in \Lambda'$ such that   
    $w_1, w_2 \in \overline{A}_{\lambda}$. Using Lemma~\ref{skew-prop}~(v) we see that  $\rho|_{\overline{A}_{\lambda}}$ is an isometry on  $\overline{A}_{\lambda}$, hence we have  $|\tilde{w}_1-\tilde{w}_2| = |w_1- w_2|$.
		
    Second, if such  $\lambda \in \Lambda'$ does not exist, we can choose $z\in T_1(A)$ such that $\pi(w_1) \leq z \leq  \pi(w_2)$. Obviously, $\pi(\tilde{w}_i)=\pi(w_i)$ and $|\tilde w_i-\pi(\tilde{w}_i)|=|w_i-\pi(w_i)|$, which implies  $|w_i-z|= |\tilde w_i-z|$, $i=1,2$. 
    Using  \eqref{EF} with $x= w_i,\ i=1,2$, we obtain
	  $|w_i-z| \leq E|\pi(w_i)-z|$, $i=1,2$, which implies
    \begin{align*}
        |\pi(w_2)-\pi(w_1)|&\leq |w_1-w_2|\leq |w_1-z| +|w_2-z|\\ &\leq E (|\pi(w_1)-x| + |\pi(w_2)-x|) 
        = E |\pi(w_2)-\pi(w_1)|.
    \end{align*}
    Consequently 
    \begin{align*}
        |\pi(w_2)-\pi(w_1)|&\leq |\tilde{w}_1-\tilde{w}_2|\leq |\tilde{w}_1-z| +|\tilde{w}_2-z|\\ &=|w_1-z|+|w_2-z| 
        \leq E |\pi(w_2)-\pi(w_1)|,
    \end{align*}
    and the inequalities $ |\tilde w_1- \tilde w_2| \leq E|w_1-w_2|$, $ |w_1-w_2| \leq E|\tilde w_1- \tilde w_2| $ follow. Therefore $\rho|_A$ is bi-Lipschitz.
		
    To prove (ii), first observe that since $A$ is compact, $\rho(A)$ has the same property by (i).
    To prove  $\reach \rho(A) > 0$, we first observe that by Lemma~\ref{skew-prop}~(vi), for each $\lambda \in \Lambda'$, $\reach \overline{A}_\lambda>r$. Let $R_\lambda$ be the $2$-rotation agreeing with $\rho$ on $\pi^{-1}(\lambda)$, $\lambda \in \Lambda'$. Then for each $\lambda\in\Lambda'$, we have $\rho(\overline{A}_\lambda)=R_\lambda(\overline{A}_\lambda)$ by Lemma~\ref{skew-prop}~(v) and, hence,
    $\reach\rho(\overline{A}_\lambda)=\reach \overline{A}_\lambda>r$. (The last equality follows from the easy fact that isometries in $\R^d$ preserve the reach.)
    Now we will infer from Lemma~\ref{str} that
    $\reach \rho(A)\geq r':=\min\{\tfrac r2,\tfrac 1{8F}\}.$
    To this end, consider two arbitrary points $x,y\in \rho(A)$ with $|x-y|<2r'$; we can suppose that $\pi(x)\leq\pi(y)$. We will distinguish two cases.

    a)  There exists  $\lambda \in \Lambda'$ such that  $\pi(x),\pi(y)\in  \overline \lambda$. Since  $x,y \in \rho (\overline{A}_{\lambda})$, $\reach\overline{A}_\lambda>r$ and $r'\leq r/2$, Lemma~\ref{str} and Remark~\ref{ostr} give that
    \begin{equation}\label{exga*}
	 \left| \frac{x+y}{2} -s\right| \leq \frac{|x-y|^2}{4r}\leq \frac{|x-y|^2}{8r'}\ \text{ for some }
	\  s \in \rho(\overline{A}_{\lambda}) \subset \rho(A).
    \end{equation}
	
    b) If the case a) does not hold, there exists  $z\in T_1(A)$ such that
	 $\pi(x)\leq z \leq \pi(y)$. Since $\tilde{x}:=\rho^{-1} (x) \in A$, by  \eqref{EF} we obtain
	 $|x-\pi(x)| =|\tilde{x}-\pi(\tilde{x})|\leq F |\pi(\tilde{x})-z|^2 = F |\pi(x)-z|^2$. Quite similarly
	 we obtain  $|y-\pi(y)|  \leq  F |\pi(y)-z|^2$. 
    Now set  $s:= \frac{\pi(x)+\pi(y)}{2}$. Then   $s=\rho(s)\in \rho(A)$ and 
    \begin{align}
	 \left| \frac{x+y}{2} -s\right|&=\left|\frac{(x-\pi(x))+(y-\pi(y))}{2}\right| 
     \leq \frac{ |x-\pi(x)| + |y-\pi(y)|}{2} \nonumber\\
     &\leq \tfrac{F}{2} |\pi(x)-z|^2 + \tfrac{F}{2} |\pi(y)-z|^2 \leq F|\pi(x)-\pi(y)|^2\nonumber\\
     &\leq F|x-y|^2 \leq\frac{|x-y|^2}{8r'}.   \label{nexga*}
    \end{align}
Using \eqref{exga*}, \eqref{nexga*}, Lemma~\ref{str} and Remark~\ref{ostr}, we easily obtain that	$\reach \rho(A)\geq r'>0$.

It remains to verify (iii). Since $A$ is skewered on  $[p,q]$, the definition of  mutirotation
 easily gives  $[p,q]\subset\rho(A)  \subset \pi^{-1}([p,q])$. So it is sufficient to prove   
$T_1(A) = T_1(\rho(A))$. To this end, let $z^*=\rho(z) \in \rho(A)$ be given.  If $z^* \notin T_1(A)$
 then  $z\notin T_1(A)$. Observe that (see Lemma \ref{skew-prop} (iv)) $z \in A_{\lambda}$ for some
 $\lambda \in \Lambda'$ and  consider a  $2$-rotation  $R_\lambda$ as above.
 Applying  Lemma~\ref{tkdif} (ii) with ($\Phi:=R_\lambda|_{\pi^{-1}(\lambda)}$  and  $A:=A_{\lambda}$),
 we easily obtain that  $\dim \wtilde \Tan(A,z)= \dim \wtilde \Tan(\rho(A),z^*)$ and therefore
 $z^*  \notin T_1(\rho(A))$. Further, if $z^* \in T_1(A)$, then clearly $z=z^* \in T_1(A)$. So, if
 $x^*\in\rho(A)$ then, using \eqref{EF} for $z$ and $x:=\rho^{-1}(x^*)$, and the equalities 
$\pi(x)=\pi(x^*)$ and $|x-\pi(x)|=|x^*-\pi(x^*)|$, we obtain 
$$|x^*-\pi(x^*)|= |x-\pi(x)| \leq  F|\pi(x)-z|^2 = F|\pi(x^*)-z|^2. $$
This easily implies that $\Tan(\rho(A),z)\subset\R$, and, since $z\in [p,q]\subset\rho(A)$, we get $z^*=z\in T_1(\rho(A))$. Thus $T_1(A) = T_1(\rho(A))$ follows.
\end{proof}

In $\R^2$, skewered sets with positive reach are closely connected to $\B$-sets.

\begin{lemma}  \label{skew:B}
    Let $M\subset\R^2$ be a set with positive reach skewered on a segment $[c,d]\subset\R$, $c<0<d$, and let $0\in T_1(M)$. Then there exists a $\B_+$-set $B\subset\R^2$ such that $M\sim_0 B$.
\end{lemma}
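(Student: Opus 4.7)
The key observation is that $[c,d]\subset M$ with $0\in(c,d)$ forces $W=\R e_1\subset \Tan(M,0)$, while $0\in T_1(M)$ means $\dim\Tan(M,0)=1$, so in fact $\Tan(M,0)=W$. This squeezes $M$ into a thin strip around $W$ near $0$. I first apply Lemma~\ref{skew-prop}~(ii) to shrink: choose $r>0$ small so that $[-r,r]\subset(c,d)$ and $M':=M\cap\overline{B}(0,r)$ has reach exceeding $r$, is skewered on $[-r,r]$, and satisfies $\diam M'<\reach M'$.

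I then define
\[
\varphi(x):=\max\{y:(x,y)\in M'\},\qquad \psi(x):=\min\{y:(x,y)\in M'\}
\]
for $x\in[-r,r]$, which are well defined by compactness of each vertical slice of $M'$, and satisfy $\psi\leq 0\leq\varphi$ since $(x,0)\in[-r,r]\times\{0\}\subset M'$. Applying \eqref{tan_proj} at $x_0=0\in T_1(M')$ to any $(x,y)\in M'$ yields $|x|\geq\sqrt{3}|y|$, so $|\varphi(x)|,|\psi(x)|\leq|x|/\sqrt{3}$, and in particular $\varphi(0)=\psi(0)=0$.

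The main technical step is to show that $\varphi$ is semiconcave and $\psi$ is semiconvex on some open interval $(-r_0,r_0)\subset(-r,r)$. My plan is to invoke Lemma~\ref{Fu_local}: since $\varphi\geq 0$ and $\psi\leq 0$, the hypograph
\[
H^+:=\{(x,y):x\in(-r_0,r_0),\ y\leq\varphi(x)\}
\]
coincides near $0$ with $M'\cup\{(x,y):x\in(-r_0,r_0),\ y\leq 0\}$. Verifying that this enlarged set has positive reach at $0$ (using the positive reach of $M'$ together with the fact that the appended lower half-strip meets $M'$ along the segment $[-r_0,r_0]\times\{0\}\subset M'$) allows me to apply Lemma~\ref{Fu_local} and conclude semiconcavity of $\varphi$ near $0$; a symmetric argument gives semiconvexity of $\psi$. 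An alternative route is to cite \cite[Theorem~6.4]{RZ17} (mentioned in the introduction), which supplies the local sandwich description for any planar positive-reach set directly. Once semiconcavity is in place, the combination of $\varphi\geq 0=\varphi(0)$ (so $0$ is a minimum, giving $\varphi'_-(0)\leq 0\leq\varphi'_+(0)$) with the semiconcavity inequality $\varphi'_+(0)\leq\varphi'_-(0)$ forces $\varphi'(0)=0$; analogously $\psi'(0)=0$. Lipschitzness of $\varphi,\psi$ on any compact subinterval is automatic from semiconcavity/semiconvexity.

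Finally, shrinking once more, the set $B:=\{(x,y): x\in[-r,r],\ \psi(x)\leq y\leq\varphi(x)\}$ fits Definition~\ref{B-sets}~(ii) of a $\B_+$-set. To establish $(M,0)\sim(B,0)$, the inclusion $M\cap B(0,\epsilon)\subset B$ is immediate from the definitions of $\varphi$ and $\psi$; the reverse inclusion requires showing that for each $x$ near $0$ the vertical segment from $(x,\psi(x))$ to $(x,\varphi(x))$ lies in $M'$, expressing the 2-dimensional region structure of $M'$ between its two boundary graphs. I expect this filling-up property and the positive-reach verification for $H^+$ to be the main obstacles, both reflecting the local 2D structure of positive-reach sets and both supplied uniformly by \cite[Theorem~6.4]{RZ17}.
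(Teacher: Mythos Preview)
Your ``alternative route'' via \cite[Theorem~6.4]{RZ17} is exactly what the paper does, and you are right that this is where the content lies. The difficulty is that your primary route through Lemma~\ref{Fu_local} does not stand on its own: the identity
\[
H^+=\{(x,y):y\le\varphi(x)\}=M'\cup\{(x,y):y\le 0\}
\]
near $0$ holds if and only if $\{x\}\times[0,\varphi(x)]\subset M'$ for all $x$ near $0$, which is precisely (the upper half of) the filling-up property you list as the final obstacle. So Lemma~\ref{Fu_local} cannot be invoked before filling-up is known, and once filling-up is known the lemma is no longer the crux. There are also two smaller gaps along this route: Lemma~\ref{Fu_local} assumes $\varphi$ is Lipschitz, which you have not established independently of semiconcavity; and the positive reach of the union $M'\cup\{y\le 0\}$ is not a formality (unions of positive-reach sets are delicate).

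The paper's proof bypasses all of this by applying \cite[Theorem~6.4]{RZ17} at the outset: since $0\in T_1^+(M)$, there is a linear isometry $G$ of $\R^2$ such that $G(M\cap B(0,r))$ is a $\tilde T_r^2$-set. Comparing $\Tan(M,0)=W$ with the tangent cone of a $\tilde T_r^2$-set forces $G(e_1)=\pm e_1$, so $G$ is one of the four diagonal sign maps. Because $M$ contains a $W$-neighbourhood of $0$, the $\tilde T_r^2$-set does too, and one checks that such a set agrees near $0$ with a $\B_+$-set $B'$; finally $B:=G^{-1}(B')$ is again a $\B_+$-set (using Remark~\ref{rem_semicon}) with $(M,0)\sim(B,0)$. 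Your approach, if the filling-up could be proved directly, would have the minor advantage of avoiding the isometry $G$ altogether; but as written it reduces to the same citation.
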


\begin{proof}
    Since $M\subset\R^2$, $\reach(M,0)>0$ and $0\in T_1^+(M)$,  we can apply \cite[Theorem~6.4]{RZ17} and get that $M$ is of type $T_2$ at $0$ which means, by definition, that there exists a linear isometry $G:\R^2\to\R^2$ such that $G(M\cap B(0,r))$ is a $\tilde{T}_r^2$-set  for some $r>0$ (see \cite[Definition~6.3]{RZ17}). 
    Comparing the tangent cone $\Tan(M,0)=W$ with that of a $\tilde{T}_r^2$-set at $0$, we see that the linear isometry $G$  maps $e_1$ onto $\pm e_1$. Thus, the matrix of $G$ is diagonal with $\pm 1$ on the main diagonal, i.e., $G=I$ (identity), or $G:(x,y)\mapsto(-x,y)$, or $G:(x,y)\mapsto(x,-y)$, or $G:(x,y)\mapsto(-x,-y)$.
    
    As $M$ is skewered on $[c,d]$, it contains a neighbourhood of $0$ in $W$, and hence the $\tilde{T}_r^2$-set $G(M\cap B(0,r))$ has the same property.
    It is easy to see that a $\tilde{T}_r^2$-set containing a neighbourhood of the origin in $W$ locally agrees at $0$ with a $B_+$-set. Consequently, there exists a $\B_+$-set $B'\subset\R^2$ such that $G(M)\sim_0 B'$.  Due to the special form of the isometry $G$, and using Remark~\ref{rem_semicon}, we obtain that the preimage $B:=G^{-1}(B')$ is again a $B_+$-set. Since clearly $M\sim_0 B$, the proof is complete.
\end{proof}

\begin{proof}[Proof of Proposition~\ref{T3}]  \hskip 2mm
Choose $\omega,r$ with $0< 2\omega<r<\reach S$ and denote $S^*:=S\cap\overline{B}(0,\omega)$. 
Due to Lemma~\ref{skew-prop}~(ii), 
$\reach S^*>r$, $\dim S^*\leq 2$, $S^*$ skewered on $[c^*,d^*]:=[c,d]\cap[-\omega,\omega]$, and $0\in T_1(S^*)$.
Using also Lemma~\ref{neprib} we see that each leave of $S^*$ is a subset of a leave of $S$, hence $S^*$ has planar leaves as well. 
Let $\tilde{\rho}:\R^d\to\R^d$ be the multirotation associated with $T_1(S^*)$ (see Definition~\ref{multrot}) defined as follows (recall Notation~\ref{notation}): If $\lambda$ is a component of $W\setminus T_1(S^*)$ with $\lambda\cap [c^*,d^*]\neq\emptyset$ and $V_\lambda\in G(d,2)$ is the plane containing the leave $S^*_\lambda$, choose a unit vector $v_\lambda\in V_\lambda\cap W^\perp$ and define the $2$-rotation $\tilde{R}_\lambda$ as identity if $v_\lambda=\pm e_2$ and as the $2$-rotation mapping $v_\lambda$ to $e_2$ and being identity on $\spa\{v_\lambda,e_2\}^\perp$ otherwise. If $\lambda\cap[c^*,d^*]=\emptyset$ we set $\tilde{R}_\lambda:=\id$. The multirotation $\tilde{\rho}$ is then determined by the family $(\tilde{R}_\lambda)$, where $\lambda$ are the components of $W\setminus T_1(S^*)$.

Since $\reach S^*>r$ and $\diam S^*<r$, we can apply Lemma~\ref{skew-prop}~(iv) and Lemma~\ref{prorot} and obtain that $B':=\tilde{\rho}(S^*)\subset\spa\{e_1,e_2\}=\R^2\subset\R^d$ has positive reach (in $\R^d$), $B'$ is skewered on $[c^*,d^*]$, $T_1(B')=T_1(S^*)$ and clearly $\Tan(B',0)=W$.

Note that $B'$ has positive reach also as a subset of $\R^2$ (see Remark~\ref{rem_reach}). By Lemma~\ref{skew:B}, there exists a $\B_+$-set $B\subset\R^2\subset\R^d$ and a number $0<\tau<\min\{\reach B,\reach B'\}$ such that
\begin{equation}  \label{BB'}
B'\cap \overline{B}(0,\tau)=B\cap \overline{B}(0,\tau).
\end{equation}
Since $0\in T_1(B')$, we have $|x|=|\tilde{\rho}^{-1}(x)|$ for all $x\in\R^2$.
Thus also
$$S^*\cap \overline{B}(0,\tau)=\tilde{\rho}^{-1}(B)\cap \overline{B}(0,\tau).$$
Note that $\tilde{\rho}^{-1}$ is a multirotation associated with $T_1(B')=T_1(S^*)$.
Since $T_1(B')\cap\overline{B}(0,\tau)=T_1(B)\cap\overline{B}(0,\tau)$ by \eqref{BB'} and Lemma~\ref{neprib} (applied to both $B$ and $B'$), there exists a multirotation $\rho$ associated with $T_1(B)$ such that $\rho|_{\pi^{-1}([-\tau,\tau])}=\tilde{\rho}^{-1}|_{\pi^{-1}([-\tau,\tau])}$ (see Lemma~\ref{mrot}) and consequently 
$S^*\cap B(0,\tau)=\rho(B)\cap B(0,\tau)$. Hence
$S\sim_0 S^*\sim_0\rho(B)$ and the assertion follows.
\end{proof}

\subsection{Proofs of Theorem~\ref{T_main} and Corollary~\ref{bilip}}  \label{subs_last}
\begin{proof}[Proof of Theorem~\ref{T_main}]  \hskip 2mm
If $a\in T_1^+(A)$, the first part of the theorem (which is equivalent to Proposition~\ref{T}, see Subsection~\ref{ss51}) follows from Propositions~\ref{T1}, \ref{T2} and \ref{T3}. So it is sufficient to prove Proposition~\ref{T} in the case $a\in T_1^-(A)$.

In this case, choose $0<r<\reach A$ and let $u$ be the unit vector such that $\Tan(A,a)=\{tu:\, t\geq 0\}$. Denoting $S:=\{a+tu:\, -\frac r4\leq t< 0\}$ and $A^*:=A\cup S$, we have 
$\reach A^* \geq  r/4$ by \cite[Lemma 3.8]{RZ17}, and clearly $a\in T_1^+(A^*)$.
We apply the already proved assertion and find a $\B_+$-set $B^*\subset\R^2\subset\R^d$ and a multirotation $\rho^*$ associated with $T_1(B^*)$ such that $(A^*,a)\approx(\rho^*(B^*),0)$. 
By Remark~\ref{oekv1}~(i) there exist a $C^{1,1}$-diffeomorphism $\Phi$ defined on a neighbourhood $U$ of $a$ and $\ep>0$ such that 
\begin{equation} \label{AB}
\Phi(a)=0 \ \text{ and }\ \Phi(A^*\cap U)\cap B(0,\ep)=\rho^*(B^*)\cap B(0,\ep).
\end{equation}
Further, there exists $0<\delta<r/4$ such that
\begin{equation} \label{S_delta}
    B(a,\delta)\subset U\text{ and }\Phi(B(a,\delta))\subset B(0,\ep).
\end{equation}
Denote $S_\delta:=S\cap B(a,\delta)=\{a+tu:\, -\delta< t< 0\}$.
Since $S\subset T_1(A^*)$, we obtain
$$\Phi(S_\delta)\subset \Phi(T_1(A^*)\cap U)\cap B(0,\ep) \subset T_1(\rho^*(B^*))$$
using Lemma~\ref{tkdif}~(ii) (applied to $A:=A^*\cap U$) and \eqref{AB}.
We have further $T_1(\rho^*(B^*))=T_1(B^*)\subset W$ by Lemma~\ref{prorot}~(iii) and Remark~\ref{Rem_listky}~(b), hence we get $\Phi(S_\delta)\subset W$. Since  $\Phi(S_\delta)$ is homeomorphic to $(0,1)$ and $0\in\overline{\Phi(S_\delta)}\setminus \Phi(S_\delta)$, we have (a) $\Phi(S_\delta)=(-\ep',0)\subset W$ or (b) $\Phi(S_\delta)=(0,\ep')\subset W$ for some $0<\ep'\leq \ep$.
We claim that we can choose $B^*,\rho^*,\Phi,U,\ep$ and $\delta$ as above satisfying \eqref{AB} and \eqref{S_delta} such that the case (a) occurs. Indeed, if $B^*,\rho^*,\Phi,U,\ep$ and $\delta$ satisfy \eqref{AB}, \eqref{S_delta} and (b), consider the reflection $G:(x,z)\mapsto(-x,z)$ ($(x,z)\in\R\times\R^{d-1}$) and set $\tilde{B}^*:=G(B^*)$, $\tilde{\rho}^*:=G\circ\rho^*\circ G$, $\tilde{\Phi}:=G\circ\Phi$, $\tilde{U}:=U$, $\tilde{\ep}:=\ep$ and $\tilde{\delta}:=\delta$. Then both \eqref{AB} and \eqref{S_delta} remain true (with $B^*:=\tilde{B}^*$, $\dots$, $\delta:=\tilde{\delta}$), and it is easy to see (using Remark~\ref{rem_semicon}) that $\tilde{B}^*$ is again a $\B_+$-set and that $\tilde{\rho}^*$ is a multirotation associated with
 $T_1(\tilde{B}^*)$. Moreover, $\tilde{\Phi}(S_{\tilde{\delta}})=-\Phi(S_\delta)=(-\ep',0)$, i.e., case (a) occurs.

We further work with the so chosen $B^*,\rho^*,\Phi,U,\ep$ and $\delta$.
If $B^*$ has the form $B^*=\{(x,y):\, x\in[-v,v],\, \psi(x)\leq y\leq\varphi(x)\}$ for some $v>0$, we get $\ep'\leq v$ and $\psi(x)=\varphi(x)=0$ whenever $x\in(-\ep',0)$ (otherwise, $x\in \Phi(S_\delta)$ would not belong to $T_1(B^*)=T_1(\rho^* (B^*))$, see Lemma~\ref{prorot}~(iii) and Remark~\ref{Rem_listky}~(b)).
Thus we have
$$\Phi(S_\delta)=(-\ep',0)=\rho^*(B^*)\cap\{(x,z):\, x<0\}\cap B(0,\ep').$$
It follows that, denoting $U':=B(a,\delta)$, 
\begin{align*}
    \Phi(A\cap U')\cap B(0,\ep')&=\Phi((A^*\setminus S)\cap U')\cap B(0,\ep')\\
    &= (\Phi(A^*\cap U')\setminus \Phi(S_\delta))\cap B(0,\ep')\\
    &=\rho^*(B^*)\cap\{(x,z):\, x\geq 0\}\cap B(0,\ep')\\
    &= \rho^*(B^*\cap\{(x,z):\, x\geq 0\})\cap B(0,\ep').
\end{align*}
Note that $B:=B^*\cap\{(x,y):\, x\geq 0\}\subset\R^2\subset\R^d$ is a $\B_-$-set.
Using Lemma~\ref{mrot} (applied with $\rho:=\rho^*$, $C:=T_1(B^*)$, $C':=T_1(B)$ and $[p,q]:=[0,v]$), we can find a multirotation $\rho$ associated with $T_1(B)$ and such that $\rho(B)=\rho^*(B)$. Then
$$\Phi(A\cap U')\cap B(0,\ep')=\rho(B)\cap B(0,\ep'),$$
which proves the assertion of Proposition~\ref{T}.

To prove the second part of Theorem~\ref{T_main}, assume that $B\subset\R^2\subset\R^d$ is a $\B$-set, $\rho$ a multirotation associated with $T_1(B)$, $U\supset\rho(B)$ is open and $\Phi:U\to\R^d$ a $C^{1,1}$-diffeomorphism with $\Phi(0)=a$. Then $\rho(B)$ is compact and has positive reach by Lemma~\ref{prorot}~(ii). Setting $A:=\Phi(\rho(B))$, we have $\reach A>0$ by Lemma~\ref{reach_C11}. We have $0\in T_1(\rho(B))$ by Lemma~\ref{prorot}~(iii) and, hence, $a\in T_1(A)$ by Lemma~\ref{tkdif}~(iii). 
Further, $\dim\rho(B)\leq 2$ since $\rho|_B$ is a homeomorpism by Lemma~\ref{prorot}~(i) and, hence, also $\dim A\leq 2$, since $A$ is homeomorpic to $\rho(B)$.
So the second part of Theorem~\ref{T_main} is proved.
\end{proof}

\begin{proof}[Proof of Corollary~\ref{bilip}]  \hskip 2mm
    Let $A\subset\R^d$ be a two-dimensional set with positive reach and $a\in A$. 
    If $a\in T_0(A)$ then $a$ is an isolated point in $A$ and the assertion is obvious.
    If $a\in T_1(A)$ then we apply Theorem~\ref{T_main} and obtain a $\B$-set $B\subset\R^2$, a multirotation $\rho$ associated with $T_1(B)$, an open set  $U\supset\rho(B)$ and a $C^{1,1}$-diffeomorphism $\Phi:U\to\R^d$ such that $\Phi(\rho(B))$ is a neighbourhood of $a$ in $A$. 
    The set $\rho(B)$ is compact by Lemma~\ref{prorot}~(ii), hence $\Phi(\rho(B))$ is compact as well. Since both $\Phi$ and $\Phi^{-1}$ are locally Lipschitz (see Remark~\ref{stability}~(i)), their restrictions to compact sets $\rho(B)$ and $\Phi(\rho(B))$ (respectively) are Lipschitz by \cite[Proposition~A48]{Lee}, i.e., $\Phi$ is bi-Lipschitz on $\rho(B)$.
    Since $\rho$ is bi-Lipschitz on $B$ by Lemma~\ref{prorot}~(i), we can choose $\Psi:=\Phi\circ\rho|_B$ and the assertion follows.

    Similarly, if $a\in T_2(A)$ then we use Theorem~L and obtain corresponding $K$, $U$ and $\Phi$. As above, we infer that $\Phi$ is bi-Lipschitz on $K$ and we obtain the assertion with $B:=K$ and $\Psi:=\Phi|_K$. 
\end{proof}
\medskip 

\section{An alternative proof of Theorem~L}

In this section we present, using Theorem~\ref{tdmnapl2}, an alternative  proof of Lytchak's Theorem~L.
Although the basic ideas of our approach come from \cite{Ly24}, we present our proof
since it is more elementary, as it does not need results from the theory of length spaces.
Moreover, our proof of Proposition~\ref{PD} (which is similar to the proof from  
\cite[Subsection 5.2]{Ly24}) uses Lemma~\ref{Fu_local} (easily inferred from \cite{Fu85}) whereas the proof from \cite{Ly24} uses a (closely related) result on ``$O_{\delta}$ sets'' from paper \cite{Re56} which is accessible in Russian only.

Assertion (ii) of the following proposition clearly implies the assertion of Theorem~L for $k=d\geq 2$. It is proved via (i), which is of some independent interest (and comes also from \cite{Ly24}, where it is used implicitly).

\begin{proposition} \label{PD}	
Let $A\subset\R^d$ ($d\geq 1$) have positive reach and $a\in T_d(A)\cap\partial A$. Then the following statements hold.
\begin{enumerate}
    \item[{\rm (i)}] If $d\geq 2$ then there exist $\ep>0$, $u\in S^{d-1}$ and a Lipschitz semiconcave function $f:u^\perp\to\R$ such that
    \begin{equation}  \label{loc_sc}
        A\cap B(a,\ep)=\hyp_u f\cap B(a,\ep).
    \end{equation}
    \item[{\rm (ii)}] There exists a convex body $K\subset\R^d$ and a surjective $C^{1,1}$-diffe\-omor\-phism $\Phi:\R^d\to\R^d$  such that
	\begin{equation}\label{ok2}
	\Phi(K) \ \text{ is a neighbourhood of }\ a\  \text{ in }\  A.
	\end{equation}
\end{enumerate}
\end{proposition}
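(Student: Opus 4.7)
The strategy is to first establish (i), which gives a semiconcave local hypograph representation of $A$ near $a$, and then derive (ii) by an explicit parabolic shear that straightens away the semiconcave correction.

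For (i), translate so that $a=0$. By Lemma~\ref{P_PR}~(ii), (iii), the tangent cone $C:=\Tan(A,0)$ is a proper closed convex cone of full dimension $d$, so both $\INt C$ and $N:=\Nor(A,0)$ are nontrivial. Standard cone duality produces a unit $u\in N$ with $-u\in\INt C$; after a rotation take $u=e_d$ and write $\R^d=e_d^\perp\oplus\R e_d$, $x=(w,t)$. Using the convexity and lower semicontinuity of the tangent cone (Lemma~\ref{P_PR}~(ii), (vii)) together with Lemma~\ref{conv_val}, I would obtain $\delta,\eta>0$ with $B(-e_d,\delta)\subset\Tan(A,x)=\Tan_C(A,x)$ (Lemma~\ref{P_PR}~(vi)) for every $x\in A\cap B(0,\eta)$. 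A standard invariance/iteration argument based on tangential regularity and closedness of $A$ then yields an \emph{inner cone property}: for some $\rho,\eta'>0$,
\[
\{(w,-s):\, 0\leq s\leq\eta',\, |w|\leq\rho s\}\subset A.
\]
Now define $f(w):=\sup\{t\in\R:\, (w,t)\in A\cap\overline{B}(0,\ep)\}$. The inner cone gives well-definedness and a lower bound for $f$ on a neighbourhood of $0$ in $e_d^\perp$, while Proposition~\ref{fedtan} combined with $e_d\in N$ yields the parabolic upper bound $\langle x,e_d\rangle\leq |x|^2/(2r)$ for $x\in A$ close to $0$, so $f$ is bounded above. The fact that the inner cone is uniformly available at every point of $A$ near $0$ translates into a uniform Lipschitz estimate for $f$, and the local identification $A\cap B(0,\ep')=\hyp_{e_d}f\cap B(0,\ep')$ follows (``$\subset$'' from the definition of $f$ and the parabolic upper bound, ``$\supset$'' from the inner cone at each boundary point). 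Lemma~\ref{local} then gives $\reach(\hyp_{e_d}f,0)>0$, so Lemma~\ref{Fu_local} yields semiconcavity of $f$ on a neighbourhood of $0$. Finally, \cite[Proposition~1.7]{Fu85} supplies a Lipschitz semiconcave extension to all of $e_d^\perp$ without modifying $f$ on a smaller neighbourhood, establishing (i).

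For (ii), the case $d=1$ is immediate, since then $A$ is locally a closed interval with endpoint $a$ and an affine bijection $\Phi:\R\to\R$ together with $K=[0,1]$ does the job. For $d\geq 2$, apply (i) and assume $a=0$, $u=e_d$. Semiconcavity yields $C_0\geq 0$ such that $g(w):=f(w)+(C_0/2)|w|^2$ is concave on $e_d^\perp$. Define the parabolic shear
\[
\Phi(w,t):=\bigl(w,\, t-(C_0/2)|w|^2\bigr),
\]
a polynomial (hence $C^{1,1}$) bijection of $\R^d$ with inverse $(w,s)\mapsto(w,s+(C_0/2)|w|^2)$; a direct computation gives $\Phi(\hyp_{e_d}g)=\hyp_{e_d}f$. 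Take $K:=\hyp_{e_d}g\cap\overline{B}(0,\rho)$ for $\rho>0$ small. Since $g$ is concave and Lipschitz with $g(0)=f(0)=0$, the set $K$ is convex and contains the interior point $(0,-\rho/2)$, so $K$ is a convex body. Choosing $\rho$ small enough so that $\Phi(\overline{B}(0,\rho))\subset B(0,\ep)$ forces $\Phi(K)\subset\hyp_{e_d}f\cap B(0,\ep)=A\cap B(0,\ep)$, while the reverse inclusion $A\cap B(0,\eta)\subset\Phi(K)$ for small $\eta>0$ follows from $\Phi^{-1}(A\cap B(0,\eta))=\hyp_{e_d}g\cap\Phi^{-1}(B(0,\eta))\subset K$, so $\Phi(K)$ is a neighbourhood of $a$ in $A$.

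The chief technical hurdle is the inner cone property in (i): converting the tangent-cone information ``$-u\in\INt\Tan(A,x)$ for every $x\in A$ near $0$'' into \emph{actual} points of $A$ forming a truncated Lipschitz cone around $-u$. Once this is in place, the remainder---Lipschitzness and semiconcavity of $f$, the parabolic shear used for (ii)---is routine, so the heart of the argument is a flow-invariance / iterated Clarke construction exploiting Lemma~\ref{P_PR}~(vi), (vii) and Lemma~\ref{conv_val}.
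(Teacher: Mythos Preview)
Your proposal is correct and follows essentially the same route as the paper: pick a direction in $\INt\Tan(A,a)$, obtain a local Lipschitz hypograph representation, invoke Lemma~\ref{Fu_local} for semiconcavity, extend via \cite[Proposition~1.7]{Fu85}, and then apply exactly the same parabolic shear for (ii). The only tactical differences are that the paper obtains the ``inner cone'' by citing \cite[Lemma~3.5]{RZ17} and derives Lipschitzness of the boundary function from the angle estimate of Corollary~\ref{uhlovy} (which directly forces uniqueness of the boundary intersection over each base point), whereas you invoke a flow-invariance/Clarke iteration for the cone and read Lipschitzness off the uniform cone aperture; both arguments deliver the same ingredients.
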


\begin{proof}
    (i). Assume that $d\geq 2$ and choose $0<r<\reach A$. Since $\Tan(A,a)$ is a full-dimensional tangent cone, we can choose a unit vector $v_0\in\INt\Tan(A,a)$ and, using Lemma~\ref{P_PR}~(vii) and Lemma~\ref{conv_val}, we easily obtain $\omega>0$ such that $v\in\INt\Tan(A,z)$ whenever $z\in A\cap B(a,\omega)$ and $\angle(v,v_0)<\omega$. We can assume without loss of generality that $a=0$ and $v_0=-e_d$. Further, if $z\in\partial A$ and $v\in\INt\Tan(A,z)$ then $-v \not\in\Tan(A,z)$ (since otherwise, the convex cone $\Tan(A,z)$ would contain a neighbourhood of $0$ and so it would be the whole $\R^d$, which is impossible, see Lemma~\ref{P_PR}~(iii)). Thus we get for any $z\in \partial A\cap B(0,\omega)$ that
    \begin{align}
      &\text{if }\angle(v,-e_d)<\omega\text{ then }v\in\INt\Tan(A,z),\quad\text{and}\\
      &\text{if }\angle(v,e_d)<\omega\text{ then }v\not\in\Tan(A,z). \label{***}
    \end{align}
    Denote $W:=e_d^\perp$. Applying \cite[Lemma~3.5]{RZ17} and the definition of tangent vectors, we find that there exist $\tau>0$ and $L>0$ such that for any $w\in W\cap B(0,\tau)$,
    \begin{equation}  \label{kuzely}
        w-L|w|e_d\in A\ \text{ and } \ w+L|w|e_d\not\in A.
    \end{equation}
    Further, choosing $0<\eta<\min\{\omega,r\omega/6\}$, we get
    \begin{equation}  \label{sklon}
        \text{if }z_1\neq z_2\in\partial A\cap B(0,\eta)\text{ then }\angle(z_2-z_1,e_d)>\tfrac\omega 3.
    \end{equation}
    Indeed, applying Corollary~\ref{uhlovy} with $a:=z_1$ and $b:=z_2$ we get that there exists $0\neq v\in\Tan(A,z_1)$ such that $\angle(z_2-z_1,v)<\frac\omega 3$, and since $\angle(v,e_d)\geq\omega$ by \eqref{***}, we get $\angle(z_2-z_1,e_d)>\tfrac\omega 3$.
    Take further $0<\delta<\tau$ so that $\delta+L\delta<\eta$. Then, \eqref{kuzely} and \eqref{sklon} imply that for any $w\in W\cap B(0,\delta)$ there exists exactly one $t=:\varphi(w)\in\R$ such that $w+te_d\in\partial A\cap B(0,\eta)$. Moreover,
    \begin{equation}\label{ahv}
        A\cap B(0,\delta)=\hyp\varphi\cap B(0,\delta),
    \end{equation}
	$\vf(0) = 0$, and \eqref{sklon} easily implies that $\vf$ is Lipschitz. 
    Applying Lemma~\ref{Fu_local} (with Lemma~\ref{local}) to a Lipschitz extension of $\vf$ to $W$ we obtain that there exists $\zeta>0$ such that $\varphi$ is semiconcave on $B:=B(0,\zeta)\cap W$. Taking for $f$ a Lipschitz semiconcave extension of $\varphi|_B$ to $W$  (which exists by \cite[Proposition~1.7]{Fu85} and Remark~\ref{RemFu}), we can by \eqref{ahv} choose  $\ep>0$ so small that \eqref{loc_sc} (with $u=-v_0$) holds. This completes the proof of (i).  

    (ii). Assume first that $d\geq 2$. Choose a concave function $g$  on $W$ and $c>0$ such that $f(w)=g(w)+c|w|^2,\ w\in W$. Then, using
	the standard identifications $W= \R^{d-1}$ and $\R^d= W \times \R$, we have
		 $\hyp f = \{(w,t):\ t \leq g(w)+c|w|^2\}$. Now, for $(w,t) \in \R^d$, set
		$$  \Phi(w,t):= (w, t+ c|w|^2)\ \ \text{and}\ \ \Psi(w,t):= (w, t- c|w|^2).$$
	Then, since  $\Psi= \Phi^{-1}$ and $w \mapsto |w|^2$ is $C^{1,1}$ smooth, we easily obtain that  
	$\Phi: \R^d \to \R^d$ is a bijective $C^{1,1}$ diffeomorphism with $\Phi(0)=0$. Moreover, clearly
	\begin{equation}\label{hyphyp}
		\Phi(\hyp g) = \hyp f.
	\end{equation}
	Choose $\delta>0$ such that  $\Phi(\overline B_{\delta}(0)) \subset B(0,\ep)$ and set
	$K:= \hyp g \cap \overline B_{\delta}(0)$. Then $K$ is a convex body and, using
	\eqref{hyphyp} and \eqref{loc_sc}, we obtain that $\Phi(K)$ is a neighbourhood of $a$ in $A$.

    Finally, consider the case $d=1$. From Lemma~\ref{ball} and Lemma~\ref{P_PR}~(v) we easily obtain that there exists $\ep>0$ such that $A\cap \overline{B}(a,\ep)=A\cap [a-\ep,a+\ep]$ equals either $K:=[a-\ep,a]$, or $K:=[a,a+\ep]$, and (ii) holds with $\Phi:=\id$.
\end{proof}

\begin{proof}[Proof of Theorem~L]  \hskip 2mm
If $k=d$, the assertion of Theorem~L follows from Proposition~\ref{PD}~(ii), since the case $a\in \interior A$ is
 trivial. 

Consider now the case $1\leq k \leq d-1$. Then
by Theorem \ref{tdmnapl2} (using also Definition~\ref{plochy}) we can choose  $\delta_1>0$ and  $k$-dimensional $C^{1,1}$ g-surface	 $\Gamma$ such that  $T_k(A) \cap B(a, \delta_1) \subset \Gamma$.
The assumptions together with Lemma~\ref{T_k}~(i), Lemma~\ref{P_PR}~(iv) and  Lemma~\ref{ball} imply that there exists $0< \delta_2<\delta_1$ such that 
	$$B:= \overline B(a, \delta_2)\cap A \subset T_k(A)   \ \ \text{and}\ \  \reach B>0,$$
	 and consequently
	\begin{equation}\label{BGa}
	B \subset T_k(A) \cap \overline B(a, \delta_2) \subset \Gamma.
	\end{equation}
	By definition, there exists $W \in G(d,k)$ and a $C^{1,1}$ mapping  $F: W \to W^{\perp}$ 
	 such that  $\Gamma = \{x+ F(x):\ x \in W\}$. Without any loss of generality we can suppose
	 that $a=0$ and $W= \R^k = \R^k \times \{0\}$, where we use the standard identifications
	 $\R^d= \R^k \times \R^{d-k}$, $\R^k = \R^k \times \{0\}$ and $W^{\perp}=  \{0\}\times \R^{d-k} = \R^{d-k}$.
	Using these conventions,  $\Gamma = \{(x,F(x)):\ x \in \R^k\}$.

	 Now we define the surjective $C^{1,1}$-diffeomorphism 
	$$   \eta: \R^d \to \R^d,\ \ \ \eta(x,y) = (x,y- F(x)).$$
	(Note that  $\eta^{-1}(x,y)= (x,y+F(x)), \ (x,y) \in \R^d$.)
	 Then clearly $C:=\eta(B) \subset  \R^k \times \{0\} = \R^k$ and $\eta(0)=0\in C$. By Lemma~\ref{reach_C11}  $C$  has positive reach in $\R^d$,
	 and so  also in $\R^k$ (see Remark~\ref{rem_reach}). 
	Moreover, by   Lemma    \ref{tkdif} (iii), we have  $\dim \Tan(C, 0) =k$.
	 Then by Proposition~\ref{PD} (ii) (the case $a \in \interior_{\R^k} C$ is trivial) there exists a convex body  $K_1\subset \R^k$
	  and a surjective  $C^{1,1}$-diffeomorphism  $\omega: \R^k \to \R^k$ 
		 such that $\omega(K_1)$ is a neighbourhood of $0$ in $C$. So, putting
		$$  \Omega(x,y) = (\omega(x), y) \in \R^k \times \R^{d-k} = \R^d$$
		and considering $K_1$ as a subset of $W=\R^k \times \{0\}$, we have that 
		  $\Omega: \R^d \to \R^d$ is a  surjective  $C^{1,1}$-diffeomorphism and  $\Omega(K_1)$ is a 
			neighbourhood of $0$ in $C$. Setting $\Psi:= \eta^{-1} \circ \Omega$, we obtain that  
			 $\Psi: \R^d \to \R^d$ is a  surjective  $C^1$-diffeomorphism and  $\Psi(K_1)$ is a neighbourhood of 
			$0$ in $A$. Moreover, by Remark  \ref{stability} (c) we have that both $\Psi$ and $\Psi^{-1}$
			 are locally $C^{1,1}$ smooth. Denote $\tilde a:= \Psi^{-1}(0)$ and observe that there exists
			 $\sigma>0$ such that $\Phi:= \Psi|_{B(\tilde a, \sigma)}$ is a $C^{1,1}$-diffeomorphism.
			 Now the choice  $U:= B(\tilde a, \sigma)$ and  $K:= K_1 \cap \overline B(\tilde a, \sigma/2)$
			 shows that the assertion of Theorem~L holds.
		\end{proof}

\subsection*{Acknowledgement}
    The authors are grateful to Alexander Lytchak for helpful comments on an earlier version of the manusript.

\end{document}